\title{On $p$-adic $L$-functions for symplectic representations of $\GL(N)$ over number fields}
\author{Chris Williams}
\date{}
\newcommand{\s}{\setlength{\itemsep}{0pt}}
\newcommand{\cEv}{\mathrm{Ev}}
\newcommand{\sEv}{\mathscr{E}\hspace{-1pt}{\scriptstyle\mathscr{V}}}
\newcommand{\sEvs}{\mathscr{E}\hspace{-1pt}{\scriptscriptstyle\mathscr{V}}}
\newcommand{\UPS}{\theta} 
\newcommand{\usw}{\underline{\sw}}
\newcommand{\VH}{V^H_{\bj,-\underline{\sw}-\bj}}
\def\input@path{{../}} 
\DeclareFontFamily{OMX}{MnSymbolE}{}
\DeclareSymbolFont{MnLargeSymbols}{OMX}{MnSymbolE}{m}{n}
\DeclareFontShape{OMX}{MnSymbolE}{m}{n}{
	<-6>  MnSymbolE5
	<6-7>  MnSymbolE6
	<7-8>  MnSymbolE7
	<8-9>  MnSymbolE8
	<9-10> MnSymbolE9
	<10-12> MnSymbolE10
	<12->   MnSymbolE12
}{}
\DeclareFontShape{OMX}{MnSymbolE}{b}{n}{
	<-6>  MnSymbolE-Bold5
	<6-7>  MnSymbolE-Bold6
	<7-8>  MnSymbolE-Bold7
	<8-9>  MnSymbolE-Bold8
	<9-10> MnSymbolE-Bold9
	<10-12> MnSymbolE-Bold10
	<12->   MnSymbolE-Bold12
}{}
\let\lsem\@undefined
\let\rsem\@undefined
\DeclareMathDelimiter{\lsem}{\mathopen}%
{MnLargeSymbols}{'102}{MnLargeSymbols}{'102}
\DeclareMathDelimiter{\rsem}{\mathclose}%
{MnLargeSymbols}{'107}{MnLargeSymbols}{'107}                     
\DeclareMathOperator{\Ind}{Ind}
\newcommand{\tikznode}[2]{%
	\ifmmode%
	\tikz[remember picture,baseline=(#1.base),inner sep=0pt] \node (#1) {$#2$};%
	\else
	\tikz[remember picture,baseline=(#1.base),inner sep=0pt] \node (#1) {#2};%
	\fi}
\newcommand{\A}{\mathbf{A}}
\newcommand{\C}{\mathbf{C}}
\newcommand{\F}{\mathbf{F}}
\newcommand{\bh}{\mathbf{h}}
\newcommand{\bj}{\mathbf{j}}
\newcommand{\bm}{\mathbf{m}}
\newcommand{\N}{\mathbf{N}}
\newcommand{\Q}{\mathbf{Q}}
\newcommand{\Qp}{\mathbf{Q}_p}
\newcommand{\Qpbar}{\overline{\Q}_p}
\newcommand{\R}{\mathbf{R}}
\newcommand{\Z}{\mathbf{Z}}
\newcommand{\Zp}{\mathbf{Z}_p}
\newcommand{\zp}{\Zp}
\newcommand{\roi}{\mathcal{O}}
\newcommand{\cA}{\mathcal{A}}
\newcommand{\cU}{\mathcal{U}}
\newcommand{\cD}{\mathcal{D}}
\newcommand{\cF}{\mathcal{F}}
\newcommand{\cH}{\mathcal{H}}
\newcommand{\cM}{\mathcal{M}}
\newcommand{\cP}{\mathcal{P}}
\newcommand{\cQ}{\mathcal{Q}}
\newcommand{\cS}{\mathcal{S}}
\newcommand{\cV}{\mathcal{V}}
\newcommand{\cX}{\mathcal{X}}
\newcommand{\cO}{\mathcal{O}}
\newcommand{\sD}{\mathscr{D}}
\newcommand{\sE}{\mathscr{E}}
\newcommand{\sV}{\mathscr{V}}
\newcommand{\sM}{\mathscr{M}}
\newcommand{\sN}{\mathscr{N}}
\newcommand{\sU}{\mathscr{U}}
\newcommand{\fd}{\mathfrak{d}}
\newcommand{\ff}{\mathfrak{f}}
\newcommand{\fg}{\mathfrak{g}}
\newcommand{\fh}{\mathfrak{h}}
\newcommand{\m}{\mathfrak{m}}
\newcommand{\fp}{\mathfrak{p}}
\newcommand{\pri}{\mathfrak{p}}
\newcommand{\fX}{\mathfrak{X}}
\DeclareMathAlphabet{\mathpgoth}{OT1}{tx-frak}{m}{n}
\newcommand{\cl}{\mathrm{Cl}}
\newcommand{\h}{\mathrm{H}}
\DeclareMathOperator{\Hom}{Hom}
\DeclareMathOperator{\GL}{GL}
\DeclareMathOperator{\Gal}{Gal}
\newcommand{\Galp}{\Gal_p}
\newcommand{\coinv}{\mathrm{coinv}}
\newcommand{\htc}{\mathrm{H}^t_{\mathrm{c}}}
\newcommand{\hc}[1]{\mathrm{H}^{#1}_{\mathrm{c}}}
\renewcommand{\leq}{\leqslant}
\renewcommand{\geq}{\geqslant}
\newcommand*{\longhookrightarrow}{\ensuremath{\lhook\joinrel\relbar\joinrel\rightarrow}}
\newcommand*{\longtwoheadrightarrow}{\ensuremath{\relbar\joinrel\twoheadrightarrow}}
\newcommand\isorightarrow{\xrightarrow{
		\,\smash{\raisebox{-0.65ex}{\ensuremath{\scriptstyle\sim}}}\,}}
\newcommand\labelisorightarrow[1]{\xrightarrow[
	\,\smash{\raisebox{0.65ex}{\ensuremath{\scriptstyle\sim}}}\,]{#1}}
\newcommand*{\defeq}{\mathrel{\vcenter{\baselineskip0.5ex \lineskiplimit0pt
			\hbox{\scriptsize.}\hbox{\scriptsize.}}}%
	=}
\newcommand*{\defeqrev}{=\mathrel{\vcenter{\baselineskip0.5ex \lineskiplimit0pt
			\hbox{\scriptsize.}\hbox{\scriptsize.}}}%
}
\newcommand{\matrd}[4]{\begin{pmatrix}#1 & #2\\#3 & #4\end{pmatrix}}
\newcommand{\smallmatrd}[4]{\left(\begin{smallmatrix}#1 & #2\\#3 & #4\end{smallmatrix}\right)}
\newcommand{\newmod}[1]{\hspace{2pt}(\mathrm{mod}\hspace{2pt}#1)}
\newcommand{\beqn}{\begin{eqnarray*}}
	\newcommand{\eeqn}{\end{eqnarray*}}
\newcommand{\beqa}{\begin{eqnarray}}
	\newcommand{\eeqa}{\end{eqnarray}}
\newcommand{\sw}{{\sf w}}
\let\OLDthebibliography\thebibliography
\renewcommand\thebibliography[1]{
	\OLDthebibliography{#1}
	\setlength{\parskip}{0pt}
	\setlength{\itemsep}{0pt plus 0.3ex}
}
\newcounter{lett}
\newtheorem{theorem-intro}[lett]{Theorem}
\newtheorem{conjecture-intro}[lett]{Conjecture}
\newtheorem{question-intro}[lett]{Question}
\newtheorem{proposition-intro}[lett]{Proposition}
\newcounter{lettprime}
\newtheorem{theorem-intro-2}[lettprime]{Theorem}
\newtheorem{thm}{Theorem}[section]
\newtheorem{theorem}[thm]{Theorem}
\newtheorem*{theorem*}{Theorem}
\newtheorem{lemma}[thm]{Lemma}
\newtheorem*{lemma*}{Lemma}
\newtheorem{proposition}[thm]{Proposition}
\newtheorem*{proposition*}{Proposition}
\newtheorem{corollary}[thm]{Corollary}
\newtheorem*{corollary*}{Corollary}
\newtheorem{definition-proposition}[thm]{Definition-Proposition}
\newtheorem{conjecture*}{Conjecture}
\theoremstyle{definition}
\newtheorem{definition-intro}[lettdef]{Definition}
\newtheorem{definition}[thm]{Definition}
\newtheorem{test-data}[thm]{Test Data}
\newtheorem*{definition*}{Definition}
\newtheorem*{remark*}{Remark}
\newtheorem*{remarks*}{Remarks}
\newtheorem{remark}[thm]{Remark}
\newtheorem*{notation*}{Notation}
\newtheorem*{fact*}{Fact}
\numberwithin{equation}{section}
\titleformat{\subsubsection}[runin]
{\normalfont\itshape}
{\thesubsubsection.}
{0.5em}
{}
[.\hspace*{6pt}\nopagebreak]
\titleformat{\subsection}[runin]
{\normalfont\bfseries}
{\thesubsection.}
{0.5em}
{}
[.\hspace*{6pt}\nopagebreak]
\newcommand{\sar}[2]{\ar@{}[#1]|-*[@]{#2}}
	\hfill\llap{($\dagger$)}\hfill\parbox{\textwidth-2cm}%
	{\emph{\BODY}}%
\hfill\llap{($*$)}\hfill%
\begin{document}

	\maketitle

	\renewcommand{\thefootnote}{\fnsymbol{footnote}} 
	\footnotetext{\texttt{chris.williams1@nottingham.ac.uk}.  \today. \emph{2020 MSC:} Primary 11F33,  11F67; Secondary 11R23.  
	}     
	\renewcommand{\thefootnote}{\arabic{footnote}}

	\begin{abstract}
	Let $F$ be a number field, and $\pi$ a regular algebraic cuspidal automorphic representation of $\mathrm{GL}_N(\mathbb{A}_F)$ of symplectic type. When $\pi$ is spherical at all primes $\pri|p$, we construct a $p$-adic $L$-function attached to any regular non-critical spin $p$-refinement $\tilde\pi$ of $\pi$ to $Q$-parahoric level, where $Q$ is the $(n,n)$-parabolic. More precisely, we construct a distribution $L_p(\tilde\pi)$ on the Galois group $\mathrm{Gal}_p$ of the maximal abelian extension of $F$ unramified outside $p\infty$, and show that it interpolates all the standard critical $L$-values of $\pi$ at $p$ (including, for example, cyclotomic and anticyclotomic variation when $F$ is imaginary quadratic). We show that $L_p(\tilde\pi)$ satisfies a natural growth condition; in particular, when $\tilde\pi$ is ordinary, $L_p(\tilde\pi)$ is a (bounded) measure on $\mathrm{Gal}_p$.  	As a corollary, when $\pi$ is unitary, has very regular weight, and is $Q$-ordinary at all $\mathfrak{p}|p$, we deduce  non-vanishing $L(\pi\times(\chi\circ N_{F/\mathbb{Q}}),1/2) \neq 0$ of the twisted central value for all but finitely many Dirichlet characters $\chi$ of $p$-power conductor.
\end{abstract}

	\setcounter{tocdepth}{1}
	\footnotesize
	\tableofcontents
	\normalsize

	\section{Introduction}

The special values of $L$-functions are highly important in modern number theory, and are the subject of a vast network of conjectures, including: 
\begin{enumerate}[(I)]\s
	\item The \emph{Beilinson and Bloch--Kato conjectures}, which describe arithmetic data in terms of the complex analytic properties of special values. As a special case, non-vanishing of a special value should force finiteness of an associated Selmer group.
	\item  \emph{Deligne's conjecture} \cite{Del79}, which predicts that the special values, ostensibly arbitrary transcendental numbers, are algebraic after scaling by controlled complex `periods'. 
	\item The \emph{Coates--Perrin-Riou/Panchishkin conjectures} \cite{coatesperrinriou89, coates89,Pan94}, that predict these special $L$-values satisfy deep $p$-adic congruences, generalising Kummer's congruences for the Riemann zeta function. 
\end{enumerate} 
The congruences predicted by (III) are encoded in the existence of a \emph{$p$-adic $L$-function}. Beautiful objects in their own right,  $p$-adic $L$-functions are fundamental in Iwasawa theory, and have led to substantial results towards (I), including special cases of the Birch--Swinnerton-Dyer conjecture.

A very general setting to consider these questions is that of regular algebraic cuspidal automorphic representations (\emph{RACARs}) $\pi$ of $\GL_N$ over a number field $F$. In this paper, we attack (III) by constructing $p$-adic $L$-functions for those $\pi$ which are \emph{symplectic} (\emph{RASCARs}, for RA-symplectic-CARs).

\subsection{Main result}
Let us state our main result more precisely. Let $F$ be an arbitrary number field, $G \defeq \mathrm{Res}_{F/\Q}(\GL_N)$, and $\pi$ a RASCAR of $G(\A)$. This forces $N = 2n$ to be even, and $\pi \cong \pi^\vee \otimes \eta$ to be essentially self-dual, for a Hecke character $\eta$ of $F^\times\backslash\A_F^\times$. Further, $\pi$ is symplectic if and only if it admits a Shalika model, if and only if it is a functorial transfer from $\mathrm{Res}_{F/\Q}\mathrm{GSpin}_{2n+1}$ \cite{FJ93,AS06,AS14}. In this setting, the automorphic realisation of Deligne's conjecture was recently proved -- including the period relations at infinity -- in work of Jiang--Sun--Tian \cite{JST}. 

Let $p$ be a prime, and assume that $\pi$ is spherical at all primes $\pri$ of $F$ above $p$. Then $\pi_{\pri} = \Ind_B^G \UPS_{\pri}$ is an unramified principal series, the normalised induction of some unramified character $\UPS_{\pri} = (\UPS_{\pri,1},...,\UPS_{\pri,2n})$ (where $B$ is the upper-triangular Borel). Let $Q \subset G$ be the standard parabolic subgroup with Levi $H \defeq \mathrm{Res}_{F/\Q}(\GL_n\times\GL_n)$, and let $J_{\pri} \subset \GL_{2n}(F_{\pri})$ be the parahoric subgroup of type $Q$. If $\pri|p$, a regular $Q$-refinement to level $J_{\pri}$ is a choice of simple Hecke eigenvalue $\alpha_{\pri}$ on $\pi_{\pri}^{J_{\pri}}$, defined precisely in Definition \ref{def:refinement}. We assume this choice is \emph{spin} (Definition \ref{def:spin}), in that it interacts well with the Shalika model; as explained in the introduction of \cite{classical-locus}, we expect this is essential for constructions of $p$-adic $L$-functions via Shalika models. Write $\tilde\pi$ for $\pi$ with such a choice of regular spin $Q$-refinement at each $\pri|p$. We assume $\tilde\pi$ to be non-$Q$-critical (Definition \ref{def:non-critical}). This is satisfied if the integrally normalised eigenvalues $\alpha_{\pri}^\circ$, defined in \S\ref{sec:construction}, have non-$Q$-critical slope. In particular if $\tilde\pi$ is ordinary (i.e. each $v_p(\alpha_{\pri}^\circ) = 0$) then it is non-$Q$-critical.

We say a Hecke character $\chi : F^\times \backslash \A_F^\times \to \C^\times$ is \emph{critical} for $\pi$ if $s=1/2$ is a Deligne-critical value of $L(\pi\times\chi,s)$, and then we define $L(\pi,\chi) \defeq L(\pi\times\chi,1/2)$. The critical characters are determined by their infinity type and the weight $\lambda$ of $\pi$. For general $F$, this $L$-function can have many different critical regions; see e.g.\ the pictures in \cite[p.1605]{LLZ15} or \cite[\S4.1]{BDP13}. We focus here on the `standard' critical range\footnote{Constructions of $p$-adic $L$-functions interpolating other ranges would be extremely interesting; see e.g.\ \cite{BDP13}, which gives an example for $\GL_2$ over an imaginary quadratic field, assuming $\pi$ is base-change.}, corresponding to the `balanced weight' condition in \cite{JST} (and, when $F$ is imaginary quadratic, region $\Sigma^{(1)}$ in \cite{LLZ15}). We describe this range in Lemma \ref{lem:critical weight}. For example, when $F$ is imaginary quadratic, this gives a square of standard critical infinity types (see \S\ref{sec:IQF}). Crucially, these standard critical values admit a representation-theoretic interpretation via branching laws for $H \subset G$, described in Lemma \ref{lem:branching law}.

 We write $\mathrm{Crit}_p(\pi)$ for the set of standard critical characters that have $p$-power conductor. Let $\Galp$ be the Galois group of the maximal abelian extension of $F$ unramified outside $p\infty$. Attached to any $\chi \in \mathrm{Crit}_p(\pi)$ is a canonical character $\chi_{[p]}$ on $\Galp$  (see \S\ref{sec:p-adic Hecke}).
 
We construct a $p$-adic $L$-function attached to $\tilde\pi$, in the following sense:

\begin{theorem-intro}\label{thm:intro}
	For any choice of isomorphism $i_p : \C \isorightarrow \overline{\Q}_p$, there exists a finite extension $L/\Qp$ and an $L$-valued locally analytic distribution $L_p^{i_p}(\tilde\pi)$ on $\Gal_p$ such that:
	\begin{enumerate}[(a)] \s
		\item For any $\chi \in \mathrm{Crit}_p(\pi)$ of conductor $\prod_{\pri|p} \pri^{\beta_{\pri}}$, we have
	\begin{align*}
	 L_p^{i_p}(\tilde\pi, \chi) &\defeq i_p^{-1} \left(\int_{\Galp} \chi_{[p]} \cdot dL_p^{i_p}(\tilde\pi)\right)\\
		&= 	A  \cdot \tau(\chi_f)^n \cdot   \prod_{\pri|p} e(\pi_{\pri},\chi_{\pri}) \cdot \frac{L^{(p)}(\pi,\chi)}{\Omega_{\pi,\chi_\infty}},
	\end{align*}
	where  $A$ is a constant defined in \eqref{eq:A}, $\Omega_{\pi,\chi_\infty} \in \C^\times$ is defined in Definition \ref{def:period}, and
	\[
	e(\pi_{\pri},\chi_{\pri})=\left\{\begin{array}{cl} q_{\pri}^{\beta_{\pri}\left(\tfrac{n^2-n}{2}\right)} \alpha_{\pri}^{-\beta_{\pri}} &: \chi_{\pri} \text{ ramified},\\
		\displaystyle \prod_{i=n+1}^{2n}
		\frac{1-\UPS_{\pri,i}^{-1}\chi_{\pri}^{-1}(\varpi_{\pri})q_{\pri}^{-1/2}}{1-\UPS_{\pri,i}\chi_{\pri}(\varpi_{\pri})q_{\pri}^{-1/2}} &: \chi_{\pri} \text{ unramified}.\end{array}\right.
	\]
	\item $L_p^{i_p}(\tilde\pi)$ is admissible of growth $(h_{\pri})_{\pri|p}$, where $h_{\pri} \defeq v_{p}(\alpha_{\pri}^\circ)$. 
	\end{enumerate} 
\end{theorem-intro}

The growth condition is described in Definition \ref{def:admissible}. In particular, if $\tilde\pi$ is ordinary, then $L_p^{i_p}(\tilde\pi)$ is a \emph{bounded} distribution, that is, a $p$-adic measure. Here $L^{(p)}$ is the $L$-function with the Euler factors at $p$ removed, $\tau(\chi_f)$ is a Gauss sum, $q_{\pri} = N_{F/\Q}(\pri)$, and $\varpi_{\pri}$ is a uniformiser. This agrees exactly with the conjectures of Coates--Perrin-Riou/Panchishkin \cite[Conj.\ 6.2]{Pan94}, except possibly the term $\Omega_{\pi,\chi_\infty}$, which we comment on in Remark \ref{rem:period relations}.

If $F$ is imaginary quadratic, and $\tilde\pi$ has non-$Q$-critical slope, then (a) and (b) determine $L_p^{i_p}(\tilde\pi)$ uniquely by \cite{Loe14}. Analogous unicity results for $F$ totally real are \cite[Prop.\ 6.25]{BDW20}.

\medskip

To our knowledge, Theorem \ref{thm:intro} gives the first such construction beyond the special cases of $\GL_2$ (over any $F$) or $F$ totally real; see \S\ref{sec:special cases} for a summary of previous results. The totally real case was handled in \cite{DJR18,BDW20}. The construction we give is inspired by those works, particularly the overconvergent approach of \cite{BDW20}. There are some additional features in the general number field setting, which we briefly summarise.
\begin{enumerate}[(1)]\s
	\item Notably, whilst Leopoldt's conjecture predicts that $L_p(\tilde\pi)$ is 1-variabled when $F$ is totally real, in general our $p$-adic $L$-functions can be many-variabled. For example, when $F = \Q$ we have $\Galp \cong \Zp^\times$, and one only has cyclotomic variation; when $F$ is imaginary quadratic, one has a two-variable $p$-adic $L$-function, with both cyclotomic and anticyclotomic variation. 

\item The method uses automorphic cycles/modular symbols. The totally real (resp.\ $\GL_2$) constructions relied on a certain numerical coincidence; that these cycles have dimension equal to the top degree (resp.\ bottom degree) in which cuspidal cohomology for $G$ contributes. This rendered some relevant cohomology groups 1-dimensional, making certain choices unique up to scalar. In the general number field case, we work in the middle of the cuspidal range, where the analogous groups are never 1-dimensional.
\end{enumerate}

We handle (1) by blending the methods of \cite{BDW20} with earlier work \cite{Wil17,BW_CJM} of the author and Barrera for $\GL_2$. Problem (2) is more serious, and for this we exploit works of Lin--Tian \cite{LT20} and Jiang--Sun--Tian \cite{JST}. They nailed down good elements in these higher-dimensional cohomology groups and proved not only the non-vanishing hypothesis of attached zeta integrals at infinity, but also the expected period relations at infinity, at least in the cyclotomic direction (see Remark \ref{rem:period relations}). We hope that the period relations in \emph{all} directions can be extracted via similar methods, but do not address that here, instead focusing on the $p$-adic interpolation.

We imagine the following special case might be of particular interest. Let $\cF$ be a cohomological Siegel modular form on $\mathrm{GSp}_4/\Q$, and base-change it to an imaginary quadratic field $F$. Under appropriate assumptions, we may apply our construction to the Langlands transfer of $\cF/F$ to $\GL_4/F$ (via \cite{AS06}), giving a two-variable $p$-adic spin $L$-function for $\cF/F$, including anticyclotomic variation. For $\GL_2$, such constructions for base-change modular forms have had important arithmetic consequences, such as proofs of one divisibility of the Iwasawa Main Conjecture \cite{SU14}.

\subsection{Application: non-vanishing of central twists}

When $\tilde\pi$ is ordinary at each $\pri|p$, then as mentioned above, our construction yields a $p$-adic measure. As an immediate application, we get the following generalisation of a result of Dimitrov--Januszewski--Raghuram (who in \cite{DJR18} treated the case $F$ totally real). Let $\lambda = (\lambda_\sigma)_{\sigma\in\Sigma}$ denote the weight of $\pi$, where $\lambda_\sigma = (\lambda_{\sigma,1},...,\lambda_{\sigma,2n}) \in \Z^{2n}$ is dominant and $\Sigma$ is the set of embeddings $\sigma : F \hookrightarrow \C$ (see \S\ref{sec:weights}).

\begin{theorem-intro}\label{thm:intro non-vanishing}
	Let $\pi$ be a unitary RASCAR of $G(\A)$, and suppose
	\begin{equation}\label{eq:regular weight}
		\lambda_{\sigma,n} > \lambda_{\sigma,n+1} \qquad \text{ for all }\sigma \in \Sigma.		
	\end{equation}
 Suppose there exists a rational prime $p$ such that $\pi_{\pri}$ is spherical and $Q$-ordinary for all $\pri|p$. Then for all but finitely many Dirichlet characters $\chi$ of $p$-power conductor, we have non-vanishing of the twisted central $L$-value
	\[
		L(\pi\times(\chi\circ N_{F/\mathbb{Q}}),\tfrac12) \neq 0.
	\]
\end{theorem-intro}

Here \emph{$Q$-ordinarity} is in the sense of \cite[\S1.1]{HidP-ord}. The unitary assumption ensures $s=1/2$ is a Deligne-critical value of $L(\pi,s)$.

 Given Theorem \ref{thm:intro}, the proof of Theorem \ref{thm:intro non-vanishing} is simple. The argument is literally identical to \cite[\S4.4]{DJR18}, so we give only a sketch, and refer the reader there for full details. 

\begin{proof}
	 Since $\pi_{\pri}$ is $Q$-ordinary for all $\pri|p$, by \cite[Lem.\ 4.4]{DJR18} there is a (unique) regular spin ordinary $Q$-refinement $\tilde\pi$ of $\pi$. Let $L_p^{i_p}(\tilde\pi)$ the $p$-adic $L$-function from Theorem \ref{thm:intro}. By \cite{JS76} (see \cite[Lem.\ 7.4]{BDW20}), the weight condition forces existence of some $j \neq 0$ such that the (non-central) twisted $L$-values $L(\pi\times(\chi\circ N_{F/\Q}),j+1/2)$ are non-zero for all Dirichlet characters $\chi$.
	
	 If $\chi$ is a Dirichlet character of $p$-power conductor, then $\chi \circ N_{F/\Q}$ is a cyclotomic character of $\Galp$. The restriction of $L_p^{i_p}(\tilde\pi)$ to the cyclotomic line is a bounded rigid analytic function on the disjoint union of a finite number of open unit discs.  By the interpolation property and non-vanishing at non-central values, the $p$-adic $L$-function is non-zero on each disc that contains characters of the form $\chi \circ N_{F/\Q}$. By Weierstrass preparation, it has finitely many zeros in each such disc, hence on the cyclotomic line; so the theorem follows from the interpolation property.
\end{proof}

If one drops assumption \eqref{eq:regular weight}, the theorem instead becomes: if there exists a $\chi$ as in the theorem such that $L(\pi \times (\chi\circ N_{F/\mathbb{Q}}),1/2) \neq 0$, then there are infinitely many such $\chi$.

In the $\GL_4$ case, after transferring via \cite{AS06}, this gives non-vanishing of many twisted central values of spin $L$-functions of (very regular weight, cohomological, Klingen-ordinary) genus 2 Siegel modular forms over number fields.

\subsection{Relation to the literature}\label{sec:special cases}

This work generalises (and is visibly inspired by) many other constructions. We summarise a few. In the case of $\GL(2)$, i.e.\ $n=1$, every RACAR is a RASCAR, and our main results/methods specialise exactly to those of \cite{BW_CJM}; and the results/methods of that paper in turn followed the earlier works \cite{PS11,Bar15,Wil17} (over $\Q$, totally real, and imaginary quadratic base fields respectively). These methods were later used in \cite{Bel12, BDJ17,BH17,BW18,BW-Iwasawa} to vary $p$-adic $L$-functions in families. 

In the case of general $\GL(2n)$, when $F$ is totally real our main result specialises exactly to \cite[Thm.\ 6.23]{BDW20}. Earlier constructions in the ordinary situation were given in \cite{AG94,Geh18,DJR18}. This construction was then used in \cite{BDW20,BDGJW} to construct and study $p$-adic families, and -- under some technical hypotheses -- to vary $p$-adic $L$-functions over these families.  

The variation of the present construction in families would be very interesting. The methods of these earlier papers -- which worked in either top or bottom cohomological degree -- do not apply. Fundamental difficulties include: controlling the families; constructing classes in the correct cohomological degree; and showing that these classes interpolate the `good' elements from \cite{JST}.

Rohrlich \cite{Roh89} proved a $\GL_2$ analogue of Theorem \ref{thm:intro non-vanishing}. The case $F$ totally real is \cite{DJR18}. For $\GL_4/\Q$, an analogous result for all weights and unitary CARs was recently proved in \cite{RY-GL4}.

Finally, in Theorems \ref{thm:intro} and \ref{thm:intro non-vanishing} it should be possible to weaken the assumption that $\pi_{\pri}$ is spherical at all $\pri|p$ to $Q$-parahoric-spherical using forthcoming work of Dimitrov--Jorza \cite{DJ-parahoric}. 

\subsection*{Acknowledgements} 
This paper would not exist without my previous collaboration with Daniel Barrera and Mladen Dimitrov, and I thank them wholeheartedly for several years' worth of discussions. I also thank Andy Graham and Andrei Jorza for many highly relevant discussions during our follow-up collaboration, David Loeffler and Frederick Thogersen for their comments and corrections on earlier drafts, and the referee for pushing me to make the article more self-contained. This research was supported by EPSRC Postdoctoral Fellowship EP/T001615/2.

\section{Preliminaries}

\subsection{Notation}\label{sec:notation}
Let $F$ be a number field of degree $d = r+2s$, where $F$ has $r$ real embeddings and $s$ pairs of complex embeddings. Let $\Sigma \defeq \{\sigma : F \hookrightarrow \C\} = \Sigma_{\R} \sqcup \Sigma_{\C}$ be the union of the real and complex embeddings. Each $\sigma \in \Sigma$ has a conjguate $c\sigma$, and $\sigma = c\sigma$ if and only if $\sigma(F) \subset \R$. Fix a rational prime $p$ and an isomorphism $i_p : \C \isorightarrow \overline{\Q}_p$. Let $F^{p\infty}$ be the maximal abelian extension of $F$ unramified outside $p\infty$, and let $\Galp \defeq  \mathrm{Gal}(F^{p\infty}/ F)$ be its Galois group. Let $\psi$ be the  standard non-trivial additive character of $F\backslash\A_F$ (as e.g.\ fixed in \cite[\S4.1]{DJR18}). 

If $v$ is a place of $F$, we write $F_v$ for the completion of $F$ at $v$, $\cO_v$ for its ring of integers, and $\F_v$ for its residue field. We fix a choice of uniformiser $\varpi_v \in \cO_v$. 

Let $G = \mathrm{Res}_{F/\Q}\GL_{2n}$, $B = TN$ be the upper-triangular Borel sugroup in $G$, $T$ the diagonal torus and $N$ the unipotent. Let $H = \mathrm{Res}_{F/\Q}(\GL_n\times\GL_n)$, with $\iota : H \hookrightarrow G$, $(h_1,h_2) \mapsto \smallmatrd{h_1}{}{}{h_2}$. We may abuse notation and write e.g.\ $B(F_{v})$ for the upper-triangular matrices in $\GL_{2n}(F_v)$.

Let $K_\infty=C_\infty Z_\infty \subset G(\R)$, where  $Z_\infty$ is the center and $C_\infty = \mathrm{O}_{2n}(\R)^r \times \mathrm{U}_{2n}(\C)^s$ is the maximal compact subgroup of $G(\R)$. If $A$ is a reductive real Lie group, then $A^\circ$ denotes the connected component of the identity.

Let $Q = HN_Q$ be the standard parabolic with Levi $H$. For a prime $\pri|p$ of $G$, let $J_{\pri} \defeq \{g \in \GL_{2n}(\cO_{\pri}) : g \newmod{\pri} \in Q(\F_{\pri})\} \subset \GL_{2n}(F_{\pri})$ be the parahoric subgroup of type $Q$, and $J_p \defeq \prod_{\pri|p}J_{\pri} \subset G(\Qp)$.

\subsection{RASCARs}
Let $\pi$ be a regular algebraic cuspidal automorphic representation (RACAR) of $G(\A)$. If $\pi$ is essentially-self-dual, i.e.\ there exists some Hecke character $\eta$ such that $\pi^\vee \cong \pi \otimes \eta^{-1}$, then $L(\pi\times\pi^\vee,s) = L(\pi, \mathrm{Sym}^2\otimes\eta^{-1},s)L(\pi,\wedge^2\otimes\eta^{-1}, s)$. This has a simple pole at $s=1$, which occurs either in the symmetric or exterior square $L$-function. Then:

\begin{definition}\label{def:RASCAR}
We say that $\pi$ has \emph{symplectic type}, and call it a \emph{RASCAR} (RA-symplectic-CAR), if any (hence all) of the following equivalent conditions hold:
\begin{itemize}\s
\item[(1)] The exterior square $L$-function $L(\pi,\wedge^2 \otimes \eta^{-1},s)$ has a pole at $s=1$;
\item[(2)] $\pi$ is the functorial transfer of an irreducible generic cuspidal automorphic representation $\Pi$ of $\mathrm{GSpin}_{2n+1}(\A_F)$ with central character $\eta$;
\item[(3)] $\pi$ admits a non-trivial $(\eta,\psi)$-Shalika model.
\end{itemize}
\end{definition}

Recall $\pi$ has a $(\eta,\psi)$-Shalika model if there is an intertwining $\cS_{\psi}^\eta : \pi \hookrightarrow \Ind_{\cS(\A)}^{G(\A)} (\eta\otimes\psi)$, where $\cS = \big\{s(h,X) = \smallmatrd{h}{}{}{h}\smallmatrd{1_n}{X}{}{1_n} : h \in \GL_n, X \in\mathrm{M}_n\big\}$ is the Shalika group and $(\eta\otimes\psi)(s(h,X)) = \eta(\det(h))\cdot \psi(\mathrm{Tr}(X))$. Our conventions are summarised in \cite[\S2.6]{BDW20}. 

In the definition, (1)$\iff$(3) was substantially proved in \cite{JS90}, and (1)$\iff$(2) in \cite{AS06,AS14}. For the equivalence in the exact form above, see \cite[Thm.\ 5.1]{GR13}. 

\subsection{Weights}\label{sec:weights}
Let $X^*(T)$ be the set of algebraic weights for $T$. A general $\lambda \in X^*(T)$ has form $\lambda = (\lambda_\sigma)_{\sigma \in \Sigma}$, where $\lambda_\sigma = (\lambda_{\sigma,1}, ..., \lambda_{\sigma,2n}) \in \Z^{2n}$. Let $X_+^*(T)$ be the set of \emph{dominant} weights, where $\lambda_{\sigma,i} \geq \lambda_{\sigma,i+1}$ for all $\sigma$ and $i$. Attached to any $\lambda \in X_+^*(T)$ is an algebraic representation $V_\lambda$ of $G$ of highest weight $\lambda$, with dual $V_\lambda^\vee$. We can decompose $V_\lambda = \otimes_{\sigma \in \Sigma} V_{\lambda,\sigma}$, $V_{\lambda}^\vee = \otimes_{\sigma \in \Sigma}V_{\lambda,\sigma}^\vee$, where $V_{\lambda,\sigma}$ is the algebraic $\GL_{2n}$-representation of highest weight $\lambda_{\sigma}$, with $G$ acting via $\sigma$.

\begin{definition}\label{def:pure}
	We say $\lambda \in X_+^*(T)$ is \emph{pure} if there exists $\sw \in \Z$ such that 
\[
\lambda_{\sigma,i} + \lambda_{c\sigma,2n+1-i} = \sw \qquad \text{for all }\sigma \text{ and }i.
\] 
We write $X_0^*(T) \subset X_+^*(T)$ for the space of pure dominant weights.
\end{definition}

Let $\pi$ be RASCAR of $G(\A)$. Let $\fg_\infty \defeq \mathrm{Lie}(G(\R))$. As in \cite{Clo90}, there is a unique $\lambda \in X^*_0(T)$, the \emph{weight} of $\pi$, such that
\[
	\h^\bullet(\fg_\infty,K_\infty^\circ; \pi_\infty \otimes V_\lambda^\vee(\C)) \neq 0.
\]
Here  $\lambda$ is pure by \cite[Lem.\ 4.9]{Clo90}. Since $\pi$ is a RASCAR, $\lambda$ is further restricted: by \cite[Thm.\ 2.1]{LT20}, for all $\sigma \in \Sigma$ there exists $\sw_\sigma \in \Z$ such that $\lambda_{\sigma,i} + \lambda_{\sigma,2n+1-i} = \sw_\sigma$ (i.e.\ each $\lambda_\sigma$ is \emph{individually} pure; see also \cite[Prop.\ 2.17]{JST} for the statement in this form). We have $\sw_\sigma = \sw$ for $\sigma \in \Sigma_{\R}$, and $\sw_\sigma + \sw_{c\sigma} = 2\sw$ for $\sigma \in \Sigma_{\C}$. We let $\underline{\sw} = (\sw_\sigma)_{\sigma\in\Sigma} \in \Z^{\Sigma}$.

\subsection{Critical $L$-values}
Let $\pi$ be a RASCAR with standard $L$-function $L(\pi,s)$. For a Hecke character $\chi : F^\times\backslash\A_F^\times \to \C^\times$, let $\pi\times\chi$ denote the RASCAR with $G(\A)$-action twisted by $\chi$. We write $L(\pi,\chi) \defeq L(\pi\times\chi,1/2)$.

\begin{definition}\label{def:critical weight}
Let $\chi$ be an algebraic Hecke character with infinity type $\mathbf{j} = (j_\sigma)_{\sigma \in \Sigma} \in \Z^\Sigma$, and let $\pi$ have weight $\lambda$.	We say $\chi$ is a \emph{standard critical value for $L(\pi,-)$} if
	\begin{equation}\label{eq:j critical}
	-\lambda_{\sigma,n} \leq j_\sigma \leq -\lambda_{\sigma,n+1} \qquad \forall\sigma \in \Sigma.
\end{equation}
In this case, we say $\chi$ is \emph{critical for $\lambda$}. We write $\mathrm{Crit}(\pi)$ for the set of all such $\chi$. 
\end{definition}

\begin{lemma}\label{lem:critical weight}
If $\chi \in \mathrm{Crit}(\pi)$, then $s=1/2$ is a Deligne-critical value of $L(\pi\times\chi,s)$. 
\end{lemma}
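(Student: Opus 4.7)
The plan is to use the standard characterization that $s = s_0$ is Deligne-critical for $L(\pi\times\chi, s)$ iff both $L_\infty(\pi\times\chi, s)$ and $L_\infty((\pi\times\chi)^\vee, 1-s)$ are holomorphic at $s = s_0$, applied with $s_0 = 1/2$; under \eqref{eq:j critical} I need to verify both holomorphy statements.

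First I would write down the archimedean Langlands parameter of $\pi_\sigma\otimes\chi_\sigma$ at each $\sigma\in\Sigma$. Since $\pi$ is regular algebraic of weight $\lambda$, and each $\lambda_\sigma$ is individually pure of weight $\sw_\sigma$ (a property specific to RASCARs, by \cite[Prop.\ 2.17]{JST}), these parameters are explicitly determined by $(\lambda_\sigma, \sw_\sigma)$ and the infinity type $j_\sigma$ of $\chi$. At a complex place the restriction to $W_\C = \C^\times$ decomposes as $\bigoplus_{i=1}^{2n} z^{a_{\sigma,i}}\bar z^{b_{\sigma,i}}$ with exponents linear in $\lambda_{\sigma,i}$, $\lambda_{c\sigma,2n+1-i}$, $j_\sigma$, and $j_{c\sigma}$; at a real place it decomposes as a sum of essentially discrete series paired up by purity, determined by the $\lambda_\sigma$ and $j_\sigma$.

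Second, I would translate into Gamma factors: $L_\sigma(\pi\times\chi, s)$ becomes a product of $\Gamma_\C(s + \mu_{\sigma,i})$ (or $\Gamma_\R$) terms with explicit shifts $\mu_{\sigma,i}$. Using that $\Gamma_\C(s + \mu)$ has a pole at $s = 1/2$ iff $\tfrac12 + \mu \in \Z_{\leq 0}$, the holomorphy of $L_\infty(\pi\times\chi, s)$ at $s = 1/2$ reduces to a one-sided bound on each $j_\sigma$, and the holomorphy of $L_\infty((\pi\times\chi)^\vee, 1-s)$ at $s = 1/2$ reduces to the opposite one-sided bound. Invoking individual purity $\lambda_{\sigma,i} + \lambda_{\sigma,2n+1-i} = \sw_\sigma$, these two families of half-integer conditions consolidate, at each $\sigma$, to precisely the two-sided inequality $-\lambda_{\sigma,n} \leq j_\sigma \leq -\lambda_{\sigma,n+1}$, i.e.\ \eqref{eq:j critical}.

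The main obstacle here is not mathematical depth but bookkeeping: fixing whether the $L$-function is unitary- or motivically-normalized, the sign convention for the infinity type of $\chi$, and the interaction between $\sigma$ and $c\sigma$ at complex places (which couples the shifts at $\sigma$ and $c\sigma$). Beyond regularity of $\pi$ (to prevent coincidental pole cancellations between the two halves of the parameter) and individual purity of each $\lambda_\sigma$ (which makes the two one-sided bounds compatible), nothing deep is used. In practice I would cite \cite{JST}, where exactly this ``balanced'' range is identified as the Deligne-critical set for symplectic $\GL_{2n}$, rather than redo the archimedean computation from scratch.
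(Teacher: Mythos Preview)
Your proposal is correct and lands on the same approach as the paper: both reduce to \cite[Prop.~2.20]{JST}. The paper's proof is just the one-line reduction you describe at the end --- observe that $\pi\times\chi$ has weight $\lambda+\bj$, apply the JST criterion for $s=1/2$ to be critical at that weight, and note this is exactly \eqref{eq:j critical} --- without spelling out the underlying Gamma-factor bookkeeping.
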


\begin{proof}
	The RACAR $\pi\times\chi$ has weight $\lambda + \mathbf{j} = (\lambda_\sigma + (j_\sigma,...,j_\sigma))_\sigma$. By \cite[Prop.\ 2.20]{JST}, $s=1/2$ is a standard critical value of $L(\pi\times\chi,s)$ if and only if $-(\lambda_{\sigma,n} + j_\sigma) \leq 0 \leq -(\lambda_{\sigma,n+1} + j_\sigma)$ for all $\sigma \in \Sigma$. This is equivalent to \eqref{eq:j critical}.
\end{proof}

\begin{remark}
	For $\bj$ in this range, $\lambda +\bj$ is a balanced weight in the sense of \cite{JST}. There are other Deligne-critical values, but for these $\lambda+\bj$ is not balanced.
\end{remark}

Crucially $\mathrm{Crit}(\pi)$ also admits a representation-theoretic description. For $\mathbf{j}_1, \mathbf{j}_2 \in \Z^{\Sigma}$, let $V_{\mathbf{j}_1,\mathbf{j}_2}^H$ be the $H$-representation of highest weight $(\mathbf{j}_1,\mathbf{j}_2)$, that is, $\det_1^{\mathbf{j}_1}\cdot \det_2^{\mathbf{j}_2}$. The following is proved in the same way as \cite[Prop.\ 6.3.1]{GR2} and \cite[Prop.\ 2.20]{JST}, via \cite[Thm.\ 2.1]{Kna01}.

\begin{lemma}\label{lem:branching law}
A Hecke character $\chi$ of infinity type $\mathbf{j} \in \Z^\Sigma$ is in $\mathrm{Crit}(\pi)$  if and only if 
	\[
		\mathrm{Hom}_H(V_\lambda^\vee, V^H_{\mathbf{j},-\usw-\mathbf{j}}) \neq 0.
	\]
In this case, $\mathrm{Hom}_H(V_\lambda^\vee, V^H_{\mathbf{j},-\usw-\mathbf{j}})$ is a line, generated by some $\kappa_{\bj}$.
\end{lemma}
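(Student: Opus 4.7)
The plan is to factor the Hom-space over archimedean places, reduce to a classical $\GL_{2n}\downarrow\GL_n\times\GL_n$ branching question at each place, and apply Knapp's branching theorem combined with the individual purity of each $\lambda_\sigma$.

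First, since $H = \prod_{\sigma \in \Sigma}(\GL_n\times\GL_n)$ acts componentwise on $V_\lambda^\vee = \bigotimes_\sigma V_{\lambda,\sigma}^\vee$ and on $V^H_{\bj,-\usw-\bj} = \bigotimes_\sigma \det_1^{j_\sigma}\otimes\det_2^{-\sw_\sigma-j_\sigma}$, the global Hom factors as
\[
\mathrm{Hom}_H(V_\lambda^\vee, V^H_{\bj,-\usw-\bj}) = \bigotimes_{\sigma \in \Sigma} \mathrm{Hom}_{\GL_n\times\GL_n}\bigl(V_{\lambda,\sigma}^\vee,\ \det_1^{j_\sigma}\otimes\det_2^{-\sw_\sigma - j_\sigma}\bigr).
\]
So it suffices to prove that each local factor is one-dimensional exactly when $-\lambda_{\sigma,n} \leq j_\sigma \leq -\lambda_{\sigma,n+1}$, and zero otherwise.

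Fix $\sigma$. Using individual purity $\lambda_{\sigma,i} + \lambda_{\sigma,2n+1-i} = \sw_\sigma$ one sees that $V_{\lambda,\sigma}^\vee \cong V_{\lambda,\sigma}\otimes\det^{-\sw_\sigma}$, and $\det^{-\sw_\sigma}$ restricts to $\det_1^{-\sw_\sigma}\otimes\det_2^{-\sw_\sigma}$ on $\GL_n\times\GL_n$. Setting $a := j_\sigma + \sw_\sigma$ and $b := -j_\sigma$ (so $a+b = \sw_\sigma$), the local Hom rewrites as
\[
\mathrm{Hom}_{\GL_n\times\GL_n}\bigl(V_{\lambda,\sigma},\ \det_1^{a}\otimes\det_2^{b}\bigr).
\]
By the branching rule for $\GL_{2n}\supset \GL_n\times\GL_n$ (a special case of Knapp \cite[Thm.\ 2.1]{Kna01}, equivalently the Littlewood--Richardson coefficient $c^{\lambda_\sigma}_{(a^n)(b^n)}$ for a pair of rectangular partitions), this Hom is a line generated by an explicit Pieri-type map when both the trace condition $n(a+b) = |\lambda_\sigma|$ and the interlacing condition $\lambda_{\sigma,n} \geq a \geq \lambda_{\sigma,n+1}$ hold, and vanishes otherwise. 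The generator is constructed exactly as in the cognate branching computations of \cite[Prop.\ 6.3.1]{GR2} and \cite[Prop.\ 2.20]{JST}.

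Finally, individual purity forces $|\lambda_\sigma| = n\sw_\sigma$, so the trace condition is automatic from $a+b = \sw_\sigma$, and purity rewrites the interlacing condition as $-\lambda_{\sigma,n} \leq j_\sigma \leq -\lambda_{\sigma,n+1}$, which is precisely \eqref{eq:j critical}. Tensoring the local generators then produces the global $\kappa_{\bj}$. The main technical point is to apply Knapp's branching rule in the exact form giving multiplicity one for a 1-dimensional character target; this reduces to the clean combinatorics of Littlewood--Richardson coefficients for rectangular partitions, after which the dualization and purity manipulations are routine bookkeeping.
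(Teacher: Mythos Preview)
Your proposal is correct and follows essentially the same approach as the paper, which simply records that the statement is proved ``in the same way as \cite[Prop.\ 6.3.1]{GR2} and \cite[Prop.\ 2.20]{JST}, via \cite[Thm.\ 2.1]{Kna01}''. You have unpacked exactly those references: the factorisation over $\sigma\in\Sigma$, the dualisation $V_{\lambda,\sigma}^\vee\cong V_{\lambda,\sigma}\otimes\det^{-\sw_\sigma}$ via individual purity, and the reduction to Knapp's multiplicity-one branching for $\GL_{2n}\supset\GL_n\times\GL_n$ with character target, where dominance collapses the interlacing to the single inequality $\lambda_{\sigma,n}\geq a\geq\lambda_{\sigma,n+1}$.
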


We will interpolate the standard critical $L$-values `at $p$'. As such, let 
\begin{equation}\label{eq:crit p}
	\mathrm{Crit}_p(\pi)\defeq  \{\chi \in \mathrm{Crit}(\pi) : \mathrm{cond}(\chi) = p^{\beta_0}\},
\end{equation}
where $p^{\beta_0} \defeq \prod_{\pri|p}\pri^{\beta_{0,\pri}}$, for $\beta_0 = (\beta_{0,\pri}) \in \Z_{\geq 0}^{\{\pri|p\}}$ a multiexponent.

\subsection{Hecke characters on ray class and Galois groups}\label{sec:p-adic Hecke}
Let $\chi$ be a Hecke character with infinity type $\bj \in \Z[\Sigma]$. We have an algebraic homomorphism $w^{\bj}: F^\times \longrightarrow \overline{\Q}^\times$ given by $w^{\bj}(x) = \prod_{\sigma \in \Sigma}\sigma(x)^{j_\sigma}.$ This then induces maps 
\begin{equation}\label{eq:w^j}
	w^{\bj}_\infty:(F\otimes_{\Q}\R)^\times \rightarrow \C^\times \xrightarrow{i_p}\overline{\Q}_p^\times, \qquad w^{\bj}_p: (F\otimes_{\Q}\Qp)^\times \rightarrow \overline{\Q}_p^\times.
\end{equation}
\begin{definition}
	We define $\chi_{[p]}$ to be the function
	\begin{align*}
	\chi_{[p]}: \A_F^\times \longrightarrow \overline{\Q}_p^\times, \qquad 
		x &\longmapsto \chi(x)\cdot \big[w_\infty^{\bj}(x_\infty)\big]^{-1} \cdot w_p^{\bj}(x_p) = \chi_\infty(\mathrm{sgn}(x_\infty))\cdot \chi_f(x) \cdot w_p^{\bj}(x_p),
	\end{align*}
where $\mathrm{sgn}(x_\infty) \in \{\pm1\}^{\Sigma_{\R}}$. Via \cite{Wei56} this takes values in some finite extension $L/\Qp$. If $\ff$ is the conductor of $\chi$, then (as in e.g.\ \cite[Prop.\ 2.4]{BW_CJM}) $\chi_{[p]}$ is a locally analytic character on the $p$-adic analytic group 
\[
\cl_F^+(\ff p^\infty) \defeq F^\times \backslash \A_F^\times/U(\ff p^\infty)F_\infty^+,
\]
where $U(\ff p^\infty)$ is the group of elements of $\widehat{\roi}_F^\times$ that are congruent to $1\newmod{\ff p^\infty}$. 
\end{definition}

Recall $\Galp$ is the Galois group for the maximal abelian extension of $F$ unramified outside $p\infty$. The structure of this group is described in detail in \cite[\S6.1]{BDW20}; in particular, the Artin reciprocity map induces an isomorphism $\cl_F^+(p^\infty) \isorightarrow \Galp$, and we have an exact sequence
\begin{equation}\label{eq:cl SES}
	0 \to \overline{\cO}_{F,+}^\times \to (\cO_F\otimes\Zp)^\times \to \Galp \to \cl_F^+ \to 0.
\end{equation}
Via reciprocity, we may consider $\chi_{[p]}$ as a character of $\Galp$.

\subsection{Locally symmetric spaces and local systems}

If $K \subset G(\A_f)$ is open compact, the locally symmetric space of level $K$ is
\[
S_K \defeq G(\Q)\backslash G(\A)/KK_\infty^\circ.
\]

\label{sec:local systems}
If $M$ is a left $G(\Q)$-module, then we have an attached `archimedean' local system $\cM$ on $S_K$ given  by the locally constant sections of 
\begin{equation}\label{eq:local system}
G(\Q)\backslash[G(\A)\times M]/KK_\infty^\circ \longrightarrow S_K,
\end{equation}
where $\gamma(g,m)kz = (\gamma gkz, \gamma\cdot m)$ for $\gamma \in G(\Q), g\in G(\A), k\in K$, $z \in K_\infty^\circ$ and $m \in M$.
Similarly, if $M$ is a left $K$-module, then we have an attached `$p$-adic' local system on $S_K$, given by the local constant sections of \eqref{eq:local system}, but with action $\gamma(g,m)kz = (\gamma gkz, k^{-1}\cdot m)$. 

If $M$ is a left $G(\Qp)$-module, then $G(\Q)$ acts via $G(\Q)\subset G(\Qp)$, and $K$ acts via projection to $G(\Qp)$. We get two attached local systems $\cM$ and $\sM$, and these are isomorphic via the map $(g,m) \mapsto (g_p^{-1}\cdot m)$ of local systems. For more detail see \cite[\S2.3]{BDW20}.


\subsection{Algebraic and analytic coefficient modules}

\subsubsection{Algebraic coefficients}\label{sec:algebraic coeffs}
If $\lambda \in X^\bullet(T)$ is a dominant algebraic weight, recall $V_\lambda$ is the algebraic $G$-representation of highest weight $\lambda$. If $\Qp \subset L \subset \overline{\Q}_p$, we describe $V_\lambda(L)$ via algebraic induction $\mathrm{Ind}_{\overline{B}}^{G}\lambda$; namely,
\[
V_\lambda(L) \defeq \{f :G(\Qp) \to L : f\text{ is algebraic}, f(\overline{b}g) = \lambda(\overline{b})f(g)\ \  \forall \overline{b} \in \overline{B}(\Qp)\},
\]	
where $\overline{B}$ is the opposite Borel. This space carries a natural left action of $G(\Qp)$ given by 
\begin{equation}\label{eq:dot action}
	(g\cdot f)(h) \defeq f(hg), \qquad g,h \in G(\Qp),
\end{equation} 
inducing a dual left action on $V_\lambda^\vee(L)$ by $g \cdot \mu(f) \defeq \mu(g^{-1}\cdot f)$. By \S\ref{sec:local systems}, we have attached local systems $\cV_\lambda^\vee(L)$ and $\sV_\lambda^\vee(L)$, and a natural isomorphism $\hc{\bullet}(S_K,\cV_\lambda^\vee(L)) \isorightarrow \hc{\bullet}(S_K,\sV_\lambda^\vee(L))$.

For our $p$-adic interpolation, the subgroup $H \subset G$ plays a crucial role; its importance is that $G/H$ is a spherical variety. It is thus convenient to consider $V_\lambda$ as a double algebraic induction, namely
\begin{equation}\label{eq:double induction}
	V_\lambda \cong \Ind_{\overline{Q}}^{G} \Ind_{\overline{B}\cap H}^{H} \lambda
\end{equation}
(see \cite[Lem.\ 3.6]{BDW20}). Here $\overline{Q}$ is the opposite parabolic to $Q$ (that is, the $(n,n)$-block lower-triangular subgroup).  Write $V_\lambda^H = \Ind_{\overline{B}\cap H}^{H} \lambda$ for the algebraic $H$-representation of highest weight $\lambda$, and denote the standard action of $h \in H$ on $v \in V_{\lambda}^H$ by $\langle h\rangle_\lambda \cdot v$. Then precisely, \eqref{eq:double induction} means we can identify $V_\lambda(L)$ with the space of algebraic $f : G(\Qp) \to V_\lambda^H(L)$ satisfying
\begin{equation}\label{eq:transform V_lambda}
 f(\overline{n}hg) = \langle h \rangle_\lambda \cdot f(g) \qquad \text{for all } \overline{n} \in \overline{N}_Q(\Qp), h\in H(\Qp), g\in G(\Qp).
\end{equation}

\subsubsection{Parahoric analytic coefficients}\label{sec:distributions}
Recall $J_p \subset G(\Qp)$ is the $Q$-parahoric subgroup. The theory of distributions on $J_p$ was developed in \cite{BW20}, and -- in our setting -- described in detail in \cite[\S3]{BDW20}. In \eqref{eq:double induction}, we replace the second algebraic induction with locally analytic induction, i.e.\ we let 
\[
	\cA_\lambda(L) = \mathrm{LAInd}_{\overline{Q}(\Zp)\cap J_p}^{J_p} V_\lambda^H(L),
\]
whence $\cA_\lambda(L)$ is the space of \emph{locally analytic} $f : J_p \to V_\lambda^H(L)$ such that 
\begin{equation}\label{eq:parahoric transform}
	f(\overline{n}hg) = \langle h \rangle_\lambda \cdot f(g) \qquad \text{for all } \overline{n} \in \overline{N}_Q(\Qp), h\in H(\Qp), g\in J_p.
\end{equation}
 Let $\cD_\lambda(L) = \Hom_{\mathrm{cts}}(\cA_\lambda(L),L)$ be the continuous dual.

Dualising the natural inclusion $V_\lambda(L) \subset \cA_\lambda(L)$ yields a specialisation map 
\begin{equation}\label{eq:r_lambda}
	r_\lambda : \cD_\lambda(L) \twoheadrightarrow V_{\lambda}^\vee(L).
\end{equation}

\subsubsection{The $*$-action on distributions}\label{sec:*-action}

The space $\cA_\lambda(L)$ carries a left $J_p$-action by
\[
(g * f)(h) = f(hg), \qquad h,g \in J_p.
\]
Recall $\varpi_{\pri}$ is a uniformiser for $F_{\pri}$, and let 
\begin{equation}\label{eq:t_p}
	t = t_{\pri} \defeq \iota(\varpi_{\pri} I_n, I_n) = \smallmatrd{\varpi_{\pri} I_n}{}{}{I_n} \in \GL_{2n}(F_{\pri}).
\end{equation}
If $f \in \cA_\lambda$, then the function $(t_{\pri}^{-1} * f) : N_Q(\Zp) \to V_\lambda^H(L)$ defined by $(t_{\pri}^{-1} * f)(n) \defeq f(t_{\pri}nt_{\pri}^{-1})$ extends uniquely to an element of $\cA_\lambda$ (recalling $N_Q(\Zp) \subset Q(\Zp)$ is the unipotent radical). We get an induced dual action on $\cD_\lambda$ given by $(t_{\pri} * \mu)(f) = \mu(t_{\pri}^{-1} * f)$, as in \cite[\S3.4]{BDW20}. 

Let $\Delta_p \subset G(\Qp)$ (resp.\ $\Delta_p^{-1}$) be the semigroup generated by $J_p$ and $t_{\pri}$ (resp.\ $t_{\pri}^{-1}$) for $\pri|p$. The actions above extend to actions of $\Delta_p^{-1}$ on $\cA_\lambda$ and $\Delta_p$ on $\cD_\lambda$.

\subsubsection{Intertwining of actions}\label{sec:actions}

The $*$-action of $\Delta_p^{-1}$ on $\cA_\lambda(L)$ preserves the subspace $V_\lambda(L)$. In particular, the $\Delta_p$-action on $\cD_\lambda(L)$ descends to the quotient $V_\lambda^\vee(L)$. Moreover, this $*$-action preserves  natural integral subspaces in all of these spaces (see \cite[Rem.\ 3.13]{BDW20}). 

As $\Delta_p \subset G(\Qp)$, \eqref{eq:dot action} induces a $\cdot$-action of $\Delta_p$ on $V_\lambda^\vee(L)$. The $\cdot$- and $*$-actions of $J_p$ visibly agree; but the actions of $t_{\pri}^{-1}$ differ. From the definitions, if $\mu \in V_\lambda^\vee(L)$, then
\begin{equation}\label{eq:dot vs *}
	t_{\pri} * \mu = \lambda(t_{\pri}) \times (t_{\pri}\cdot \mu).
\end{equation}

Since the $\cdot$- and $*$-actions of $J_p$ agree, if $K = K^pJ_p \subset G(\A_f)$ is a parahoric-at-$p$ level group, then both actions yield the same local system $\sV_\lambda^\vee(L)$ on $S_K$. However, since the actions of $t_{\pri}$ are different, we get two different Hecke operators on the cohomology $\hc{\bullet}(S_K,\sV_\lambda^\vee(L))$: the `automorphic' operator $U_{\pri}^\cdot$, which is canonical but may have non-integral eigenvalues; and the `$p$-adic' operator $U_{\pri}^*$, which is integrally normalised but depends on the choice of $\varpi_{\pri}$. 


\subsection{Automorphic cohomology classes}
By \cite[p.120]{Clo90}, if $\pi$ has weight $\lambda$ then the $(\fg_\infty,K_\infty^\circ)$-cohomology $\h^i(\fg_\infty,K_\infty^\circ; \pi_\infty \otimes V_\lambda^\vee(\C))$ is non-zero  
	exactly when 
\begin{equation}\label{eq:cuspidal range}
	rn^2 + s(2n^2-n)\leq i \leq r(n^2+n-1) + s(2n^2+n-1).
\end{equation}

Via cuspidal cohomology (see \cite{Clo90}), for any open compact $K \subset G(\A_f)$ there is an injective and Hecke-equivariant map
\[
\h^i(\fg_\infty,K_\infty^\circ; \pi_\infty \otimes V_\lambda^\vee(\C)) \otimes \pi_f^K \longhookrightarrow \hc{i}(S_K,\cV_\lambda^\vee(\C)).
\]
Composing with $i_p : \C \isorightarrow \overline{\Q}_p$, and the isomorphism from \S\ref{sec:local systems}, we get an injective map
\begin{equation}\label{eq:automorphic cohomology}
\h^i(\fg_\infty,K_\infty^\circ; \pi_\infty \otimes V_\lambda^\vee(\C)) \otimes \pi_f^K \longhookrightarrow \hc{i}(S_K,\cV_\lambda^\vee(\overline{\Q}_p)) \isorightarrow \hc{i}(S_K,\sV_\lambda^\vee(\overline{\Q}_p)).
\end{equation}


\section{Local test data}\label{sec:local}
To get useful cohomology classes from the map \eqref{eq:automorphic cohomology}, we must make good choices of local input data, that is, elements $\varphi_v \in \pi_v$ for all finite $v$, and a class $[\omega]$ in the $(\fg_\infty,K_\infty^\circ)$-cohomology. We do so here; in \S\ref{sec:FJ finite} for $v\nmid p\infty$, in \S\ref{sec:local at p} for $v|p$, and \S\ref{sec:local zeta infinity} at infinity. 

Since our motivation is global, let $\pi$ be a RASCAR with an $(\eta,\psi)$-Shalika model. Let $\lambda$ be the weight of $\pi$, with purity weight $\usw \in \Z^\Sigma$. Let $\chi \in \mathrm{Crit}_p(\pi)$ be a Hecke character of infinity type $\bj$ (and $p$-power conductor). Let $V_{\chi,s}^H$ be the character $\chi|\cdot|^s \otimes (\chi\eta|\cdot|^s)^{-1}$ of $H(\A)$. We write $V_{\chi_v,s}^H$, $V_{\chi_f,s}^H$ and $V_{\chi_\infty,s}^H$ for the restrictions to $H(F_v)$, $H(\A_{f})$ and $H(\R)$.

 For each place $v$ of $F$, fix a local intertwining $\cS_{\psi_v}^{\eta_v}$ of $\pi_v$ into its Shalika model. This induces finite and infinite intertwinings $\cS_{\psi_f}^{\eta_f}$ and $\cS_{\psi_\infty}^{\eta_\infty}$.

\subsection{Friedberg--Jacquet integrals at finite places}\label{sec:FJ finite}
First we recap \cite[\S3]{FJ93} (see also \cite[Prop.\ 2.10]{JST}). Let $v$ be a finite place, and fix a Haar measure $dg_v$ on $\GL_n(F_v)$ such that $\GL_n(\cO_v)$ has volume 1. The \emph{local Friedberg--Jacquet integral} is
\[
Z_v(\varphi_v,\chi_v,s) \defeq \int_{\GL_n(F_v)}\cS_{\psi_v}^{\eta_v}(\varphi_v)\left[\matrd{g}{}{}{1}\right]\chi_v|\cdot|^{s-\tfrac{1}{2}}\Big(\det(g)\Big) dg.
\]
This converges absolutely for $\mathrm{Re}(s) \gg 0$. Its normalisation
\[
Z_v^\circ(\varphi_v,\chi_v,s) \defeq \frac{1}{L(\pi_v\times\chi_v,s)}Z(\varphi_v,\chi_v,s) \in \mathrm{Hom}_{H(F_v)} \Big(\pi_v \otimes V_{\chi_v,s-1/2}^H, \ \C\Big)
\]
admits analytic continuation to $\C$.

The following are proved in Propositions 3.1 and 3.2 of \cite{FJ93} respectively (cf.\ \cite[Prop.\ 3.3]{DJR18}). Let $\delta_v$ be the valuation of the different of $F_v/\Q_\ell$, where $v|\ell$, and let $q_v$ be the size of $\F_v$.
\begin{proposition}[Friedberg--Jacquet]\label{prop:FJ test vector}
	\begin{itemize}\s
		\item[(i)]  There exists a test vector $\varphi_v^{\mathrm{FJ}} \in \pi_v$ such that 
		\begin{equation}\label{eq:FJ test vector}
			Z_v^\circ(\varphi_v^{\mathrm{FJ}},\chi_v,s) = (q_v^{s-1/2}\chi_v(\varpi_v)^{-1})^{n\delta_v}
		\end{equation}
		 for all $s$ and all unramified $\chi_v$.
		\item[(ii)] If $\pi_v$ is spherical, the spherical vector is such a test vector.
	\end{itemize}
\end{proposition}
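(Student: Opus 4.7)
The plan is to follow Friedberg--Jacquet, proving (ii) first via a Casselman--Shalika-type computation in the unramified situation and then constructing test vectors in the Shalika model to handle (i). Throughout, the $\delta_v$-dependence in the answer is produced by a substitution that relates $\psi_v$ to a character of conductor $\cO_v$.

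For (ii), I would normalise the spherical vector so that $\cS_{\psi_v}^{\eta_v}(\varphi_v^\circ)[1]=1$, and first assume $\delta_v = 0$. By the Iwasawa decomposition on $\GL_n(F_v)$, the integral reduces to a sum of values of $\cS_{\psi_v}^{\eta_v}(\varphi_v^\circ)$ on diagonal torus elements; the unramified Shalika formula identifies this sum with the Euler product for $L(\pi_v\times\chi_v,s)$, giving $Z_v^\circ(\varphi_v^\circ,\chi_v,s)=1$. For general $\delta_v$, substituting $g\mapsto \varpi_v^{-\delta_v}g$ in the zeta integral introduces a factor $(q_v^{s-1/2}\chi_v(\varpi_v)^{-1})^{n\delta_v}$ from the twist $\chi_v|\cdot|^{s-1/2}\circ\det$, and reduces the remaining integral to the $\delta_v = 0$ case after replacing $\psi_v$ by the rescaled character $x\mapsto \psi_v(\varpi_v^{-\delta_v}x)$, which has conductor $\cO_v$. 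Spherical invariance of $\varphi_v^\circ$ keeps the integrand unchanged, yielding the stated formula.

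For (i), I would exhibit $\varphi_v^{\mathrm{FJ}}$ directly in the Shalika model, chosen so that $\cS_{\psi_v}^{\eta_v}(\varphi_v^{\mathrm{FJ}})[\mathrm{diag}(g,1)]$ is a specific locally constant, compactly supported function on $\GL_n(F_v)$ concentrated on the coset $\varpi_v^{-\delta_v}\GL_n(\cO_v)$. Such a vector exists by smoothness of $\pi_v$ combined with the fact that the Shalika model contains all sufficiently smooth functions on the Shalika double coset in a neighbourhood of the identity. The resulting zeta integral collapses to an explicit finite integral, and careful normalisation produces the claimed ratio after dividing by $L(\pi_v\times\chi_v,s)$. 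The main obstacle is to engineer the test vector so that the dependence on the unramified $\chi_v$ enters \emph{only} through the factor $\chi_v(\varpi_v)^{-n\delta_v}$: this requires the support of the Shalika function to sit over a single valuation-class in $\GL_n(F_v)/\GL_n(\cO_v)$, which is exactly the role played by the shift by $\mathrm{diag}(\varpi_v^{-\delta_v}I_n, I_n)$, and explains why (ii) is the special case in which no smoothing is needed.
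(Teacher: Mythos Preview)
The paper does not prove this proposition; it only records that (i) and (ii) are Propositions~3.1 and 3.2 of Friedberg--Jacquet respectively (cf.\ \cite[Prop.~3.3]{DJR18}). Your sketch for (ii) is essentially what Friedberg--Jacquet do: an unramified Casselman--Shalika-type computation giving $Z_v=L(\pi_v\times\chi_v,s)$ when $\delta_v=0$, together with a conductor shift to account for general $\delta_v$. (The $\delta_v$-reduction you sketch is slightly loose: the substitution $g\mapsto\varpi_v^{-\delta_v}g$ also shifts the argument of the Shalika function, and spherical invariance under $\GL_{2n}(\cO_v)$ does not directly absorb this; the honest argument relates the two intertwinings $\cS_{\psi_v}^{\eta_v}$ and $\cS_{\psi_v'}^{\eta_v}$ for $\psi_v'(x)=\psi_v(\varpi_v^{-\delta_v}x)$ via conjugation by $\mathrm{diag}(\varpi_v^{-\delta_v}I_n,I_n)$.)

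Your approach to (i), however, has a genuine gap. If you choose $\varphi_v^{\mathrm{FJ}}$ so that $g\mapsto\cS_{\psi_v}^{\eta_v}(\varphi_v^{\mathrm{FJ}})[\mathrm{diag}(g,1)]$ is the characteristic function of the single coset $\varpi_v^{-\delta_v}\GL_n(\cO_v)$, then the unnormalised integral is
\[
Z_v(\varphi_v^{\mathrm{FJ}},\chi_v,s)=\mathrm{vol}(\GL_n(\cO_v))\cdot\bigl(q_v^{s-1/2}\chi_v(\varpi_v)^{-1}\bigr)^{n\delta_v},
\]
a single monomial. Dividing by $L(\pi_v\times\chi_v,s)$ therefore leaves an unwanted factor $L(\pi_v\times\chi_v,s)^{-1}$, which depends on both $s$ and the Satake parameters of $\pi_v\times\chi_v$ and cannot be absorbed by any normalisation constant. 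In other words, to obtain $Z_v^\circ=(q_v^{s-1/2}\chi_v(\varpi_v)^{-1})^{n\delta_v}$ you need $Z_v$ to equal the $L$-factor times a monomial, not a monomial alone; a Shalika function supported on a single $\GL_n(\cO_v)$-coset can never achieve this when the local $L$-factor is nontrivial. The actual argument in \cite{FJ93} is more delicate: one first shows that $Z_v^\circ(\varphi,\chi_v,s)$ always lies in $\C[q_v^{\pm s}]$, then that the ideal generated by these values (as $\varphi$ varies) is the whole ring, and finally extracts a vector realising the unit---this last step uses the structure of the Shalika model, not a single compactly supported bump function.
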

Note we do \emph{not} get 1 on the right-hand side of \eqref{eq:FJ test vector}, as our $\psi_v$ need not have conductor $\cO_v$.

If $\varphi_f \in \pi_f$ can be written as $\varphi_f = \otimes_v \varphi_v$, then define $Z_f^\circ(\varphi_f,\chi_f,s) \defeq \prod_{v\nmid \infty} Z_v^\circ(\varphi_v,\chi_v,s)$ (and similarly $Z_f = \prod Z_v$). Note that 
\[
Z_f^\circ(-,\chi_f,s) \in \mathrm{Hom}_{H(\A_{F,f})}(\pi_f\otimes V_{\chi_f,s-1/2}^H, \C).
\] 
Let $\varphi_{f}^{\mathrm{FJ},(p)} = \otimes_{v\nmid p\infty}\varphi_v^{\mathrm{FJ}} \in \pi_f^{(p)}$. If $\chi \in \mathrm{Crit}_p(\pi)$ and $v \nmid p\infty$, then $\chi_v$ is unramified, hence:
\begin{proposition}\label{prop:FJ}
	For $\pri|p$, let $\varphi_{\pri} \in \pi_{\pri}$ be arbitrary, and let $\varphi_f = \varphi_{f}^{\mathrm{FJ},(p)} \otimes (\otimes_{\pri|p}\varphi_{\pri}) \in \pi_f$. Then for any $\chi \in \mathrm{Crit}_p(\pi)$, and for $\mathrm{Re}(s) \gg 0$, we have
	\begin{equation}\label{eq:friedberg-jacquet}\textstyle
		Z_f(\varphi_f, \chi_f,\tfrac12) =  \left(\prod_{v\nmid p\infty}\chi(\varpi_v^{\delta_v})^{-n}\right) \cdot \left(\prod_{\pri|p}Z_{\pri}\big(\varphi_{\pri},\chi_{\pri},\tfrac12\big)\right) \cdot L^{(p)}\big(\pi_f\otimes\chi_f,\tfrac12\big),
	\end{equation}
	where $L^{(p)}$ is the $L$-function without the Euler factors at $\pri|p$.
\end{proposition}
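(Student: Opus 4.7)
The statement is essentially a bookkeeping consequence of the definition $Z_v = L(\pi_v\times\chi_v,s)\cdot Z_v^\circ$ together with the unramified calculation in Proposition \ref{prop:FJ test vector}. My strategy is to factorise $Z_f$ into a product over finite places, separate the contribution at $\pri\mid p$ from that at $v\nmid p\infty$, and carry out the unramified evaluation at the latter.

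First, I would write
\[
Z_f(\varphi_f,\chi_f,s) = \prod_{v\nmid\infty} Z_v(\varphi_v,\chi_v,s) = \prod_{v\nmid\infty} L(\pi_v\times\chi_v,s)\cdot Z_v^\circ(\varphi_v,\chi_v,s),
\]
which is valid for $\mathrm{Re}(s)\gg0$ where both sides converge absolutely. At the places $\pri\mid p$, I would make no further simplification and simply retain the factor $Z_{\pri}(\varphi_{\pri},\chi_{\pri},\tfrac12)$. At every place $v\nmid p\infty$, the hypothesis $\chi\in\mathrm{Crit}_p(\pi)$ gives $\mathrm{cond}(\chi)=p^{\beta_0}$, so $\chi_v$ is unramified; meanwhile the chosen vector $\varphi_v=\varphi_v^{\mathrm{FJ}}$ is the Friedberg--Jacquet test vector (which coincides with the spherical vector whenever $\pi_v$ is spherical, by Proposition \ref{prop:FJ test vector}(ii)). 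Applying Proposition \ref{prop:FJ test vector}(i) and specialising to $s=1/2$ yields
\[
Z_v^\circ(\varphi_v^{\mathrm{FJ}},\chi_v,\tfrac12) = \bigl(q_v^{0}\chi_v(\varpi_v)^{-1}\bigr)^{n\delta_v} = \chi_v(\varpi_v)^{-n\delta_v} = \chi(\varpi_v^{\delta_v})^{-n}.
\]

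Finally I would recombine: multiplying the local $L$-factor into each away-from-$p$ term and taking the product gives
\[
\prod_{v\nmid p\infty} L(\pi_v\times\chi_v,\tfrac12)\cdot Z_v^\circ(\varphi_v^{\mathrm{FJ}},\chi_v,\tfrac12) \;=\; L^{(p)}(\pi_f\otimes\chi_f,\tfrac12)\cdot\prod_{v\nmid p\infty}\chi(\varpi_v^{\delta_v})^{-n},
\]
which is exactly the partial $L$-function up to the stated correction factor. Multiplying by $\prod_{\pri\mid p} Z_{\pri}(\varphi_{\pri},\chi_{\pri},\tfrac12)$ gives \eqref{eq:friedberg-jacquet}.

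There is no real obstacle here: the only points requiring any care are that $\delta_v=0$ for almost all $v$ (so the product over correction factors is effectively finite), and that the convergence assertion $\mathrm{Re}(s)\gg0$ is used to justify the factorisation before specialising. Both are standard. The substance of the calculation is entirely contained in Proposition \ref{prop:FJ test vector}; this proposition merely packages its content into the global shape needed for the $p$-adic interpolation downstream.
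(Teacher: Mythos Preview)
Your argument is correct and matches the paper's approach exactly: the proposition is presented there as an immediate consequence (``hence'') of Proposition~\ref{prop:FJ test vector}, using that $\chi\in\mathrm{Crit}_p(\pi)$ forces $\chi_v$ to be unramified at every $v\nmid p\infty$. You have simply written out the factorisation and the specialisation $s=\tfrac12$ explicitly, which the paper leaves to the reader.
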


\subsection{$Q$-refinements and local choices at $\pri|p$}\label{sec:local at p}
For $p$-adic interpolation, it is essential to treat the primes $\pri|p$ separately. In this case, good local vectors have been pinned down in \cite[\S3]{DJR18}, \cite[\S2.7]{BDW20}, \cite[\S9]{BDGJW}, which we summarise here.  \emph{For ease of notation, for \S\ref{sec:local at p} only, we will largely drop subscripts $\pri$, i.e.\ write $\pi = \pi_{\pri}$, $\varpi = \varpi_{\pri}$, $J = J_{\pri}$, etc.).} 

Assume that $\pi$ is spherical. We write $\pi = \Ind_B^G\UPS$, with $\UPS = (\UPS_1,...,\UPS_{2n})$ as in \cite[\S6.1, Notation 9.1]{BDGJW}. In particular we take $\UPS$ such that $\theta_i\theta_{n+i} = \eta$ for $i = 1,\dots,n$ (which is possible by \cite[Prop.\ 1.3]{AG94}, since $\pi$ has an $(\eta,\psi)$-Shalika model).

Recall $t = t_{\pri}$ from \eqref{eq:t_p}. On $\pi^{J}$, we have the Hecke operator  $U_{\fp} := [J t  J]$. 

\begin{definition}\label{def:refinement}
	A \emph{$Q$-refinement} $\tilde\pi = (\pi,\alpha)$ of $\pi$ is a choice of $U_{\pri}$-eigenvalue $\alpha$ on $\pi^J$. We say $\tilde\pi$ is \emph{regular} if $\alpha$ is a simple eigenvalue.
\end{definition}

Recalling that $\pi = \mathrm{Ind}_B^G\theta$, by \cite[Lem.\ 4.8.4]{Che04} (cf.\ \cite[Prop.\ 2.5]{BDGJW}) any $Q$-refinement is of the form
\begin{equation}\label{eq:Itildepi}
\alpha = q^{n^2/2}\prod_{i\in I(\tilde\pi)}\theta_i(\varpi),
\end{equation}
where $q = q_{\pri} \defeq \#\F_{\pri}$ and $I(\tilde\pi) \subset \{1,\dots,2n\}$ is a subset of size $n$. In particular, there are at most ${2n \choose n}$ possible $Q$-refinements. Only a subset of these `interact well with the Shalika model', and in practice we are interested only in this special class of refinements.

\begin{definition}\label{def:spin}
	Let $\tilde\pi = (\pi,\alpha)$ be a regular $Q$-refinement, with corresponding subset $I(\tilde\pi) \subset \{1,\dots,2n\}$ as in \eqref{eq:Itildepi}. We say $\tilde\pi$ is \emph{spin} if $I(\tilde\pi)$ contains exactly one of $1$ or $n+1$, and exactly one of $2$ or $n+2$, $\dots$, and exactly one of $n$ or $2n$. (In particular, if $I(\tilde\pi)$ contains some $i \in \{1,\dots,n\}$, then $I(\tilde\pi)$ does \emph{not} contain $n+i$).
\end{definition}

Note that, in the case where the ${2n \choose n}$ $Q$-refinements are all different, exactly $2^n$ of them are spin. We henceforth fix a regular spin $Q$-refinement $\tilde\pi = (\pi,\alpha)$. For this $\tilde\pi$, up to reordering the $\UPS_i$, we may (and henceforth do) assume $I(\tilde\pi) = \{n+1,n+2,\dots,2n\}$, whence by \eqref{eq:Itildepi} we have
\begin{equation}\label{eq:alpha}
	\alpha = q^{n^2/2}\UPS_{n+1}(\varpi)\cdots\UPS_{2n}(\varpi).
\end{equation}

\begin{remark}
The above criterion for spin refinements was first given in \cite[\S3.3]{DJR18}, where a regular spin $Q$-refinement was called `$Q$-regular'. 

A more conceptual reinterpretation of Definition \ref{def:spin} was given in \cite[Part II]{BDGJW} and \cite{classical-locus}. We summarise briefly. Recall $\pi$ is the functorial transfer of a representation $\Pi$ of $\mathrm{GSpin}_{2n+1}(F_{\pri})$. Via root data, there is a natural parabolic subgroup $\cQ \subset \mathrm{GSpin}_{2n+1}$ associated to $Q\subset G$ (described in \cite[\S2.1]{classical-locus}). Then $\tilde\pi$ is spin if and only if $\alpha$ is an eigenvalue of a Hecke operator $\cU_{\pri}$ on the $\cQ$-parahoric invariant vectors in $\Pi$; see \cite[\S3]{classical-locus} for more details (noting spin here is called $Q$-spin there).

We indicate how spin refinements are expected to `interact well with the Shalika model'. Let $\tilde\pi = (\pi,\alpha)$ be any regular $Q$-refinement, and let $W \in \cS_{\psi}^\eta(\pi^J)[U_{\fp}-\alpha]$ be a generator of the $\alpha$-eigenspace in the Shalika model. Then we expect $W(t^{-\delta}) \neq 0$ if and only if $\tilde\pi$ is spin. The `if' direction was proved in \cite[Lem.\ 3.6]{DJR18}; the converse is predicted by \cite[\S8]{classical-locus} (which also gives partial results towards this). 
\end{remark}
	
The expression $W(t^{-\delta})$ in the above remark is related to (twisted) local Friedberg--Jacquet integrals (see e.g.\ \cite[Prop.\ 3.4]{DJR18}).  Via a separate treatment, these twisted integrals are the subject of Proposition \ref{prop:zeta p} below. They are familiar in the theory of $p$-adic $L$-functions; see, for example, \S\ref{sec:evaluations} of the present paper.

The following `local Birch lemma' is \cite[Prop.\ 9.3]{BDGJW}. Let $u \defeq \smallmatrd{1}{w_n}{0}{1} \in \GL_{2n}(F_{\pri})$, where $w_n$ is the longest Weyl element for $\GL_n$. Recall $\delta = \delta_{\pri}$ is the valuation of the different of $F_{\pri}$. For a smooth character $\chi$ of $F_{\pri}^\times$ of conductor $\fp^{\beta_0}\subset \cO$, let $\tau(\chi)$ be the local Gauss sum 
\[
\tau(\chi) = \tau(\chi,\psi) = q^{\beta_0}(1-q^{-1})\int_{\cO^{\times}} \chi(c\varpi^{-\beta_0-\delta}) \psi(c\varpi^{-\beta_0-\delta}) d^{\times} c,
\]
where $d^\times c$ is the Haar measure on $\cO^\times$ of total measure 1.

\begin{proposition}\label{prop:zeta p}
	Let $\tilde\pi = (\pi,\alpha)$ be a regular spin $Q$-refinement, with $\alpha$ normalised as in \eqref{eq:alpha}. There exists an eigenvector $\varphi^{\alpha} \in \pi^{J}[U_{\pri} - \alpha]$ such that for any smooth character $\chi$ of $F_{\pri}^\times$ of conductor $\pri^{\beta_0} \subset \cO_{\pri}$, we have 
	\[
	Z_{\pri}\Big((u^{-1}t^{\beta}) \cdot \varphi^{\alpha}, \chi, \tfrac12\Big)
	= \frac{q^n}{(q-1)^n} \cdot  \frac{q^{\delta(n^2-n)/2}}{\alpha^\delta} \cdot \chi(\det w_n) \cdot Q(\pi,\chi),
	\]
	where  $\beta = \mathrm{max}(1,\beta_0)$ and
	\[
	Q(\pi,\chi)=\left\{\begin{array}{cl}  q^{\beta\left(\tfrac{-n^2-n}{2}\right)} \cdot \tau(\chi)^n  &: \chi \text{ ramified},\\
		\displaystyle { \chi(\varpi)^{-n\delta} \cdot \alpha \cdot q^{-n^2} \cdot \prod_{i=n+1}^{2n}
			\frac{1-\UPS_{i}^{-1}\chi^{-1}(\varpi)q^{-1/2}}{1-\UPS_{i}\chi(\varpi)q^{-1/2}}} &: \chi \text{ unramified}.\end{array}\right.
	\]
\end{proposition}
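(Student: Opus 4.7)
The plan is to follow the structure of \cite[Prop.\ 9.3]{BDGJW}: explicitly construct $\varphi^\alpha$ from the Iwahori decomposition of $\pi^J$, track the action of $u^{-1}t^\beta$ on its image under the Shalika functional, and evaluate the resulting integral via Casselman--Shalika (when $\chi$ is unramified) or a Gauss sum computation (when $\chi$ is ramified).

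First, I would pin down $\varphi^\alpha$. By the standard decomposition of unramified principal series at parahoric level (\cite[Lem.\ 4.8.4]{Che04}, cf.\ \cite[Prop.\ 2.5]{BDGJW}), $\pi^J$ admits a basis indexed by $W_H\backslash W/W_B$, and the $U_\pri$-eigenvalues on this space are the products $q^{-n^2/2}\prod_{i\in S}\UPS_i(\varpi)$ as $S$ runs over $n$-subsets of $\{1,\dots,2n\}$. Regularity of $\alpha$ ensures the $\alpha$-eigenspace is one-dimensional (corresponding to $S = \{n+1,\dots,2n\}$), so I can write $\varphi^\alpha$ as an explicit linear combination of the basis, normalised so that $\cS_{\psi_\pri}^{\eta_\pri}(\varphi^\alpha)(1)$ takes a prescribed value.

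Next I would substitute $(u^{-1}t^\beta)\cdot\varphi^\alpha$ into the zeta integral and compute it. Since the Shalika functional transforms equivariantly under $(\eta_\pri,\psi_\pri)$, the element $u^{-1}t^\beta$ translates the argument of the integrand deep enough inside $\GL_n(F_\pri)$ to detect the conductor $\pri^{\beta_0}$ of $\chi$; after a change of variables, the integral localises over a compact neighbourhood of $\varpi^\beta$ in the torus of $H$. For $\chi$ unramified ($\beta_0 = 0$, $\beta = 1$), Casselman--Shalika reduces this to the product of local $L$-factors for the characters $\UPS_{n+1},\dots,\UPS_{2n}$ that cut out $\alpha$, producing the stated ratio
\[
\chi(\varpi)^{-n\delta}\cdot\alpha\cdot\prod_{i=n+1}^{2n}\frac{1-\UPS_i^{-1}\chi^{-1}(\varpi)q^{-1/2}}{1-\UPS_i\chi(\varpi)q^{-1/2}}.
\]
For $\chi$ ramified ($\beta_0\geq 1$, $\beta=\beta_0$), the integral factors into $n$ one-dimensional integrals of $\chi\cdot\psi_\pri$ over level sets in $\cO_\pri^\times$, each giving a Gauss sum $\tau(\chi)$, and the prefactor $q^n/(q-1)^n$ combines with the volume factor to yield $q^{\beta(-n^2-n)/2}\tau(\chi)^n$.

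The main obstacle is the bookkeeping of normalising constants: the powers of $q$ arising from Iwahori volume ratios, the factor $\alpha^\delta$ and $q^{\delta(n^2-n)/2}$ reflecting that $\psi_\pri$ has conductor controlled by the different $\delta$ rather than $\cO_\pri$, and the exact Gauss-sum convention all depend sensitively on the choice of $\psi_\pri$ and the Haar measure. These are fixed as in \S\ref{sec:notation} to match the conventions of \cite[\S9]{BDGJW}, whereupon the proposition drops out by direct computation.
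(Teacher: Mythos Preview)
Your outline is essentially the proof of \cite[Prop.\ 9.3]{BDGJW} itself, and is a reasonable sketch of how that computation goes. But the paper does not redo this computation: it simply cites \cite[Prop.\ 9.3]{BDGJW} as a black box and then notes that the statement here differs from theirs only by an algebraic rearrangement. Concretely, the paper observes that in the ramified case one substitutes the identity $\alpha = q^{-n^2/2}\UPS_{n+1}(\varpi)\cdots\UPS_{2n}(\varpi)$ (equation \eqref{eq:alpha}) into the exponent, and in the unramified case one pulls a factor of $\UPS_i\chi(\varpi)q^{1/2}$ out of each term of the product and again uses \eqref{eq:alpha} to collapse the resulting product into the single factor $\alpha$. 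So your proposal is not wrong, but it is far more work than required: the actual proof is two sentences of algebra on top of the cited result, and the explicit use of \eqref{eq:alpha} is the only new content you should highlight.
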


\begin{proof}
	This is slightly rearranged from \cite{BDGJW}; the only significant difference in the ramified case is that we exploit \eqref{eq:alpha}. 	In the unramified case, we have pulled a factor of $\UPS_i\chi(\omega)q^{1/2}$ out of the product in $Q(\pi,\chi)$ \emph{op.\ cit.}, and  again used \eqref{eq:alpha}. To make our final formula more `symmetric' in $\beta$ and $\delta$, our eigenvector is $q^{\delta n^2}$ times the eigenvector from \cite{BDGJW}.
\end{proof}

If $\tilde\pi = (\pi,\alpha)$ is not spin, then by contrast we expect this zeta integral vanishes identically on the $\alpha$-eigenspace (see \cite[\S8]{classical-locus}).

\subsection{Classes at infinity}\label{sec:local zeta infinity}
We now choose a class $[\omega]$ at infinity, via the `zeta integral at infinity'. The choice connects closely to the automorphic cycles in \S\ref{sec:critical L-values} (cf.\ \cite[\S4.1]{JST}). 

\subsubsection{The zeta integral at infinity}
Recall $\iota : H \hookrightarrow G$ and $K_\infty$ from \S\ref{sec:notation}, let $L_\infty \defeq H(\R) \cap K_\infty$, and let $\cX_H \defeq H(\R)^\circ/L_\infty^\circ$. Let $\fX_H \defeq \mathrm{Lie}(\cX_H)$, and let $t \defeq \mathrm{dim}_{\R}(\fX_H)$. Letting $\fX_H^{\C} \defeq \fX_H \otimes_{\R} \C$, we have $\wedge^t \fX_H^{\C} = \C$. Given a $\wedge^t\fX_H^{\C}$-valued Haar measure $\mu$ on $\GL_n(F\otimes \R)$, the attached \emph{Friedberg--Jacquet integral at infinity} 
\[
Z_\mu^\circ(-,\chi_\infty,s) \in \mathrm{Hom}_{H(\R)^\circ}\Big( (\wedge^t\fX_H^{\C})^* \otimes \pi_\infty \otimes V_{\chi_\infty,s-1/2}^H, \ \C\Big)
\]
is defined when $\mathrm{Re}(s)\gg0$ by
\begin{align*}
	Z_\mu^\circ(&\omega \otimes \varphi_\infty \otimes 1,\chi_\infty,s) \defeq\\
	& \frac{1}{L(\pi_\infty\otimes\chi_\infty,s)} \int_{\GL_n(F\otimes\R)}
	\cS_{\psi_\infty}^{\eta_\infty}(\varphi_\infty)\left[\matrd{g}{}{}{1}\right]\chi_\infty|\cdot|^{s-\tfrac{1}{2}}\Big(\det(g)\Big) d\langle \mu,\omega\rangle g.
\end{align*}

\subsubsection{Modular symbols at infinity}
The following is \cite[\S4.3]{JST}. As $H(\R)^\circ$-modules, we have 
\[
V_{\chi_\infty}^H \defeq V_{\chi_\infty,0}^H = \chi_\infty \otimes \eta_\infty^{-1}\chi_\infty^{-1} \cong V_{\bj,-\usw-\bj}^H(\C).
\]
In particular, our choice $\kappa_{\bj} : V_\lambda^\vee  \to \VH$ (from Lemma \ref{lem:branching law}) induces an $H(\R)^\circ$-module map $V_\lambda^\vee(\C) \to V_{\chi_\infty}^H$, and hence an $H(\R)^\circ$-module map $V_\lambda^\vee(\C) \otimes (V_{\chi_\infty}^H)^\vee \to \C$. We thus obtain a map
\[
	Z_\mu^\circ(-,\chi_\infty,1/2) \otimes \kappa_{\bj}  : [\pi_\infty \otimes V_{\chi_\infty}^H] \otimes [V_\lambda^\vee(\C)\otimes (V_{\chi_\infty}^H)^\vee] \longrightarrow \wedge^t\fX_H^{\C}.
\]

\begin{definition}
 Let $\fh = \mathrm{Lie}(H(\R))$. The \emph{modular symbol at infinity} is the composition
	\begin{align*}
		\cP_{\chi_\infty} : \h^t(\fg,K_\infty^\circ; \pi_\infty \otimes V_\lambda^\vee(\C)) & \xrightarrow{ \ \iota^* \ } \h^t(\fh,L_\infty^\circ; \pi_\infty \otimes V_\lambda^\vee(\C))\\
		&= \h^t(\fh,L_\infty^\circ; [\pi_\infty \otimes V_{\chi_\infty}^H] \otimes [V_\lambda^\vee(\C)\otimes (V_{\chi_\infty}^H)^\vee])\\
		&\xrightarrow{ \ \  Z_\mu^\circ(-, \chi_\infty,1/2) \otimes \kappa_{\bj}\ \ } \h^t(\fh,L_\infty^\circ;\wedge^t\fX_H^{\C}) = \C.
	\end{align*}
\end{definition}

\begin{theorem}[Lin--Tian, Jiang--Sun--Tian] \label{thm:non-vanishing}
	There exists a class $[\omega] \in \h^t(\fg,K_\infty^\circ; \pi_\infty \otimes V_\lambda^\vee(\C))$ such that $\cP_{\chi_\infty}([\omega]) \neq 0$ for all $\chi \in \mathrm{Crit}(\pi)$.
\end{theorem}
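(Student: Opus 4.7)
The plan is to construct $[\omega]$ as a pure tensor via a Künneth decomposition of $(\fg_\infty,K_\infty^\circ)$-cohomology. Factor $\pi_\infty = \bigotimes_{\sigma} \pi_\sigma$ over archimedean places, and correspondingly
\[
\h^t(\fg_\infty,K_\infty^\circ; \pi_\infty \otimes V_\lambda^\vee(\C)) = \bigotimes_\sigma \h^{t_\sigma}(\fg_\sigma,K_\sigma^\circ; \pi_\sigma \otimes V_{\lambda_\sigma}^\vee(\C)),
\]
where $t = \sum_\sigma t_\sigma$ and $t_\sigma$ is the dimension of the local analogue of $\cX_H$. The degree $t$ sits at the \emph{bottom} of the cuspidal range \eqref{eq:cuspidal range}, so each local cohomology is governed by a distinguished minimal $K_\sigma^\circ$-type in $\pi_\sigma$. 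Using the standard classification of cohomological representations (Vogan--Zuckerman at real places, and its complex analogue), I would pin down explicit local classes $[\omega_\sigma]$ via a chosen generator of the line inside $\wedge^{t_\sigma}(\fg_\sigma/\fk_\sigma)^* \otimes (\pi_\sigma \otimes V_{\lambda_\sigma}^\vee)^{K_\sigma^\circ}$ corresponding to the minimal type, and set $[\omega] \defeq \bigotimes_\sigma [\omega_\sigma]$.

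Next, I would factorise the modular symbol compatibly: $\cP_{\chi_\infty}([\omega]) = \prod_\sigma \cP_{\chi_\sigma}([\omega_\sigma])$, where each local factor breaks into (i) restriction from $K_\sigma^\circ$-cohomology to $L_\sigma^\circ$-cohomology via $\iota^*$, (ii) the local branching $\kappa_{\bj,\sigma}: V_{\lambda_\sigma}^\vee \to V_{\bj_\sigma,-\sw_\sigma-\bj_\sigma}^H$, and (iii) the archimedean Friedberg--Jacquet integral $Z_{\mu_\sigma}^\circ(-,\chi_\sigma,1/2)$. The branching step (ii) is non-zero for every $\chi \in \mathrm{Crit}(\pi)$ by Lemma \ref{lem:branching law}, so non-vanishing of $\cP_{\chi_\infty}([\omega])$ reduces, place-by-place, to non-vanishing of the archimedean zeta integral on the minimal-type vector that underlies $[\omega_\sigma]$, for every critical infinity type $\bj_\sigma$.

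The final input is an explicit non-vanishing result for the archimedean Friedberg--Jacquet integral. At real places, one computes $Z_{\mu_\sigma}^\circ(\omega_\sigma,\chi_\sigma,1/2)$ against the minimal $\mathrm{O}_{2n}(\R)$-type and shows that the normalised integral becomes an explicit (non-zero) product of Gamma-function quotients, which is the archimedean computation of Sun that feeds into Jiang--Sun--Tian. At complex places, one has to run a genuinely more involved computation, since $(\mathfrak{gl}_{2n}(\C),\mathrm{U}_{2n})$-cohomology of $\pi_\sigma$ is supported in a longer range and the relevant minimal-type matrix coefficient is more intricate; this is where the Lin--Tian calculation enters.

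The main obstacle is exactly the uniformity: producing \emph{one} class $[\omega]$ whose image under $\cP_{\chi_\infty}$ is non-zero for \emph{every} critical $\chi$, rather than a separate class for each $\chi$. If one only wanted non-vanishing for a single fixed $\chi$, a Schur-type argument using $H(\R)^\circ$-equivariance of $\iota^* \circ \kappa_{\bj} \circ Z_\mu^\circ$ combined with non-vanishing of the branching and of some test integral would suffice. To get uniformity, one must choose $[\omega_\sigma]$ as a cocycle built from the minimal-type matrix coefficient in such a way that, after restriction to $L_\sigma^\circ$ and branching, the resulting integrand varies \emph{analytically} in the infinity type $\bj_\sigma$ and can be identified with an explicit ratio of archimedean $L$-factors times a Gamma product that has no zeros inside the entire critical strip \eqref{eq:j critical}. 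Pinning down such a cocycle uniformly across all embeddings -- in particular at complex places, where $\pi_\sigma$ is induced and the minimal type is not one-dimensional -- is the real content of Lin--Tian/Jiang--Sun--Tian.
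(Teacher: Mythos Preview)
Your proposal contains a concrete factual error that undermines the argument. You claim that the degree $t$ sits at the \emph{bottom} of the cuspidal range \eqref{eq:cuspidal range}. It does not: one computes $t - \big(rn^2 + s(2n^2-n)\big) = (r+s)(n-1)$ and $\big(r(n^2+n-1)+s(2n^2+n-1)\big) - t = sn$, so $t$ is at the \emph{top} of the range when $F$ is totally real, at the bottom only when $n=1$, and strictly in the middle otherwise. Consequently your assertion that the local cohomology at a complex place is ``a line'' governed by the minimal $K_\sigma^\circ$-type is false: for complex $\sigma$ the degree-$t_\sigma$ cohomology has dimension greater than one, and there is genuine content in selecting a good vector there. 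This is precisely the obstacle the paper flags in the introduction (point (2) after Theorem~\ref{thm:intro}).

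The paper's proof does not attempt to redo the K\"unneth/minimal-type construction you outline. Instead it simply \emph{quotes} the specific class $[\omega]$ constructed in \cite{JST} (after their Proposition~4.9), where the delicate choice inside the higher-dimensional cohomology has already been made. The non-vanishing for parallel infinity types $\bj$ is proved in \cite{JST}; the paper then observes that their argument (Lemma~5.13 \emph{op.\ cit.}) decomposes $\cP_{\chi_\infty}$ into three maps, the first two independent of $\chi_\infty$, and the third governed by the archimedean Friedberg--Jacquet integral on the minimal $K$-type. For the latter, non-vanishing for \emph{all} critical $\bj$ (not just parallel ones) is exactly \cite[Thm.~1.2]{LT20}. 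So the paper's proof is a two-line citation once one identifies the right references, whereas your sketch would require reproving the hard parts of \cite{JST} and \cite{LT20} from scratch --- and your stated starting point (bottom degree, one-dimensional local pieces) is not available.
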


\begin{proof}
	After replacing $\pi$ (globally) with $\pi\times \widetilde{\chi}$ for some (fixed) $\widetilde{\chi} \in \mathrm{Crit}(\pi)$, and modifying $\lambda$ accordingly, we may assume the weight of $\pi$ is balanced in the sense of \cite{JST} (since then $j=0$ is a critical integer). Let $[\omega]$ be the class chosen by Jiang--Sun--Tian after Proposition 4.9 of  \cite{JST}. The required non-vanishing is shown \emph{op.\ cit}.\ for $\chi$ of the form $\chi = \chi_0|\cdot|^j$, with $\chi_0$ finite-order and $j \in \Z$; that is, for $\bj$ parallel. If $F$ has a real embedding, this accounts for all possible critical $\chi$, and we are done. 
	
	If $F$ has no real embedding, then $\chi$ can have non-parallel infinity type. To see the more general non-vanishing we state here, we look at the proof in \cite{JST}. The choice of $[\omega]$ stems from the choice of map $\phi_0$ in Lemma 5.12 \emph{op.\ cit}. Once this choice is fixed, non-vanishing follows from non-vanishing of the composition of three maps in Lemma 5.13 \emph{op.\ cit}. The first two maps are independent of $\chi_\infty$, so are non-vanishing as \emph{op.\ cit}. Non-vanishing of the third follows from non-vanishing of $Z^\circ_\mu(-,\chi_\infty,1/2)$ on the minimal $K$-type in $\pi_\infty$. But this was shown by Lin--Tian in \cite[Thm.\ 1.2]{LT20}. We are in situation (2) of that theorem since $\chi$ is a standard critical character.
\end{proof}

\begin{definition}\label{def:period}
	Define the \emph{period at $\chi_\infty$} (attached to the choice $[\omega]$) to be 
	\begin{equation}\label{eq:omega pi}
		\Omega_{\pi,\chi_{\infty}} \defeq w^{\bj}(\det w_n) \cdot \cP_{\chi_\infty}([\omega])^{-1},
	\end{equation}
	recalling $w_n$ is the longest Weyl element in $\GL_n$ and $w^{\bj}$ is from \S\ref{sec:p-adic Hecke}.
\end{definition}

\begin{remark}
	The simplified (purely archimedean) statements above suffice for our purposes; but in fact, \cite{JST} prove considerably more. Using global input, one may find a $\Q(\pi,\eta)$-rational structure on $\h^t(\fg,K_\infty^\circ; \pi_\infty \otimes V_\lambda^\vee(\C))$ (see Lemma 4.7 \emph{op.\ cit}.), and one may take $[\omega]$ to be $\Q(\pi,\eta)$-rational. Such a rational $[\omega]$, and the resulting period $\Omega_{\pi,\chi_\infty}$, depends globally on $\pi$ (not just $\pi_\infty$) and is well-defined up to $\Q(\pi,\eta)^\times$-multiple.
\end{remark}

\begin{remark}\label{rem:period relations}
	The sign $w^{\bj}(\det w_n)$ looks a little artificial, and could in any case be absorbed into the choice of $\kappa_{\bj}$. It is an artifact of our later normalisation of branching laws, and we include it to allow for cleaner interpolation statements in later sections. More precisely, in Theorem \ref{thm:critical value} it will combine with the local terms $\chi_{\pri}(\det w_n)$ at $\pri|p$ (from Proposition \ref{prop:zeta p}), ultimately yielding a factor of $\chi_{[p]}(\det w_n)$ in the final interpolation formula. This factor can then be normalised away with a Dirac distribution; see \S\ref{sec:proof theorem A}.
	
	Once $[\omega]$ is fixed, $\Omega_{\pi,\chi_{\infty}}$ depends only on the sign $\epsilon_{\chi} \in \{\pm 1\}^{\Sigma(\R)}$ (notation as in \cite[\S2.2.1]{BW_CJM}), and $\bj$, through the choice of branching law $\kappa_{\bj}$. We further expect that for certain normalised choices of $\kappa_{\bj}$, that $\Omega_{\pi,\chi_{\infty}}$ should have very light, explicit dependence on $\bj$.
	
	To be more precise: fix $\chi_0$ finite order, and let $\chi_j \defeq \chi_0|\cdot|^j$. In \cite{JST} the $\kappa_{\bj}$ are normalised (in Lemma 4.8) to send $\smallmatrd{1}{-w_n}{w_n}{1} \cdot v_0^\vee$ to 1, where $v_0^\vee$ is a highest weight vector in $V_\lambda^\vee$. One of their main technical results is then that there exists $\epsilon \in \{\pm1\}$ such that $\epsilon^j \cdot i^{-jn[F:\Q]} \cdot \cP_{\chi_{j,\infty}}$ is independent of $j$. This leads to their proof of the period relations for $L(\pi\times\chi_0,s)$ as $s$ ranges over critical integers $j$. 
	
	We will later, via a different method, specify choices of $\kappa_{\bj}$ via a different method that allows $p$-adic interpolation. Naturally one should expect that our choices are related to those of \cite{JST}, and that such a relation should imply the value of our $p$-adic $L$-function at any special value \emph{on the cyclotomic line} would satisfy exactly the interpolation predicted by Panchishkin (including the factor at infinity). If $F$ has a real embedding, this accounts for all special values.	When $F$ has no real embedding, it is natural to hope that an adaptation of the methods in \cite{JST} would provide an analogous result for the more general $\cP_{\chi_\infty}$, which would mean our $p$-adic $L$-functions have the correct interpolation factor at $\infty$ at \emph{all} special values.
\end{remark}

\subsection{Summary}

We collect together our choices.

\begin{test-data}\label{test-data}
	 Let $\pi$ be a RASCAR of weight $\lambda$ spherical at every $\pri|p$. Then:
	\begin{itemize}\s
\item For each $\pri|p$, fix a regular spin $Q$-refinement $\tilde\pi_{\pri} = (\pi_{\pri},\alpha_{\pri})$ of $\pi_{\pri}$. Write $\tilde\pi = (\pi, \alpha)$ for the collection of these choices, for $\alpha = (\alpha_{\pri})_{\pri|p}$. Let $\varphi_{\pri}^{\alpha_{\pri}} \in \pi_{\pri}$ be as in Proposition \ref{prop:zeta p}.

\item Away from $p$, let $\varphi_{f}^{\mathrm{FJ},(p)} = \otimes_{v\nmid p\infty}\varphi_v^{\mathrm{FJ}} \in \pi_f^{(p)}$, for $\varphi_v^{\mathrm{FJ}}$ as in Proposition \ref{prop:FJ test vector}.

\item At $\infty$, let $[\omega] \in \h^t(\fg,K_\infty^\circ; \pi_\infty \otimes V_\lambda^\vee(\C))$ be the class from Theorem \ref{thm:non-vanishing}.
\end{itemize}
Write $\varphi_f^{\mathrm{FJ},\alpha} = \varphi_f^{\mathrm{FJ},(p)} \otimes (\otimes_{\pri|p}\varphi_{\pri}^{\alpha_{\pri}})$.
\end{test-data}

Let $u^{-1}t_p^{\beta} = \prod_{\pri|p}u^{-1}t_{\pri}^{\beta_{\pri}} \in G(\Qp)$. Then for any $\chi \in \mathrm{Crit}_p(\pi)$, we have computed $\cP_{\chi_\infty}([\omega]) \cdot Z_f^\circ(u^{-1}t_p^\beta\cdot \varphi_f^{\mathrm{FJ},\alpha}, \chi, 1/2)$ in terms of $L(\pi\times\chi,1/2)$, explicit terms at $p$, and a \emph{non-zero} period.


\section{Critical $L$-values via cohomology}\label{sec:critical L-values}

We now describe a cohomological interpretation of the Friedberg--Jacquet integrals, and hence the standard critical $L$-values, following \cite{GR2} and \cite{JST}. We begin to tailor our approach towards $p$-adic interpolation, however, with features from \cite{DJR18,BDW20,BDGJW}.

Throughout, let $\pi$ be a RASCAR with an $(\eta,\psi)$-Shalika model. Let $\lambda$ be the weight of $\pi$, with purity weight $\usw$. Let $\chi \in \mathrm{Crit}_p(\pi)$ be an algebraic Hecke character of infinity type $\bj$ and conductor $p^{\beta_0}= \prod_{\pri|p}\pri^{\beta_{0,\pri}}$, where $\beta_0 = (\beta_{0,\pri}) \in \Z_{\geq 0}^{\pri|p}$ is a multiexponent. For technical reasons (cf.\ Proposition \ref{prop:zeta p}), we let $\beta_{\pri} = \max(1,\beta_{0,\pri})$ and let $\beta = (\beta_{\pri}) \in \Z_{\geq 1}^{\pri|p}$.

\subsection{Automorphic cycles}

One of the main results of \cite[\S2]{FJ93}, reinterpreted in \cite[Prop.\ 4.1]{JST}, is that the product 
\[
	Z_\mu^\circ \cdot Z_f^\circ(-,\chi,s) : (\wedge^t \fX_H^{\C})^* \otimes \pi \otimes V_{\chi,s-1/2}^H \longrightarrow \C,
\]
introduced here in \S\ref{sec:local}, can be computed  via period integrals for $H \subset G$. In particular, they showed that there exists a Haar measure $\mu$ such that
 \[
 	[Z_\mu^\circ(-,\chi_\infty,s) \cdot Z_f^\circ(-,\chi_f,s)\big](\omega \otimes \varphi \otimes 1) \cdot L(\pi\times\chi) = \int_{X_\beta} \varphi \cdot \omega,
 \]
 where $X_\beta$ is an \emph{automorphic cycle} for $H$ that we define below. We will explain this, and interpret the right-hand side via cohomology.
 
For the rest of \S\ref{sec:critical L-values}, let $K \subset G(\A_f)$ be an open compact subgroup fixing $\varphi_f^{\mathrm{FJ},\alpha}$ from Test Data \ref{test-data}. Recall $L_\infty = H(\R) \cap K_\infty$. (Here we take intersections with $H$ with respect to $\iota$).

\begin{definition}\label{def:auto cycle}
Let $L \subset H(\A_f) \cap K$ be open compact.	The \emph{automorphic cycle of level $L$} is
	\[
	X_L \defeq H(\Q)\backslash H(\A) /L L_\infty^\circ.
	\]
\end{definition}

By \cite[Lemma 2.7]{Ash80}, $\iota : H \hookrightarrow G$ induces a proper map $X_L \hookrightarrow S_K$, which we also denote $\iota$.

In \S\ref{sec:evaluations}, we will choose $L_\beta \subset H(\A_f)$ at $\beta$, which (as in \cite[\S4.1]{BDW20}) is sufficiently small that:
\begin{itemize}\s
	\item $\chi$ and $\eta$ are trivial on $L_\beta$,
	\item $X_\beta \defeq X_{L_\beta}$ is a real manifold,
	\item and $H(\Q) \cap hL_\beta L^\circ_\infty h^{-1} = Z_G(\Q) \cap L_\beta L_\infty^\circ$ for all $h \in H(\A)$. 
\end{itemize}

Recall $t = \mathrm{dim}_{\R}(\cX_H) = \mathrm{dim}_{\R}(X_\beta)$, and $F$ has $r$ (resp.\ $s$) real (resp.\ complex) places.

\begin{lemma} 
	We have $t = \mathrm{dim}_{\R}(X_\beta) = r(n^2+n-1) + s(2n^2-1).$
\end{lemma}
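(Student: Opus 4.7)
The plan is to compute $\dim_\R(X_\beta)$ directly from the definition, reducing the problem to a dimension count of real Lie groups. Since $L_\beta \subset H(\A_f)$ is open compact, it contributes $0$ to the real dimension, so
\[
\dim_\R(X_\beta) = \dim_\R(H(\R)/L_\infty^\circ) = \dim_\R(H(\R)) - \dim_\R(L_\infty),
\]
which is also the definition of $t = \dim_\R(\cX_H)$ (using that passing to connected components does not change dimension). Hence both equalities in the lemma follow from the same dimension count.

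Next I would compute the two terms. From $F \otimes_\Q \R \cong \R^r \times \C^s$ and $H = \mathrm{Res}_{F/\Q}(\GL_n \times \GL_n)$, one gets $H(\R) \cong \GL_n(\R)^{2r} \times \GL_n(\C)^{2s}$, so
\[
\dim_\R H(\R) = 2rn^2 + 4sn^2.
\]
For $L_\infty = H(\R)\cap K_\infty = H(\R)\cap C_\infty Z_\infty$, note $Z_\infty \subset H(\R)$ since the center consists of scalar (hence block-diagonal) matrices, so $L_\infty = (H(\R)\cap C_\infty)\cdot Z_\infty$. The intersection $H(\R)\cap C_\infty$ picks out block-diagonal pairs $(h_1,h_2)$ that are orthogonal (resp.\ unitary) at real (resp.\ complex) places, giving $O_n(\R)^{2r}\times U_n(\C)^{2s}$ of real dimension $rn(n-1) + 2sn^2$. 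Then $Z_\infty = (F\otimes\R)^\times$ has dimension $r+2s$, and $(H(\R)\cap C_\infty)\cap Z_\infty = C_\infty\cap Z_\infty$ is $\{\pm I_{2n}\}^r \times U(1)^s$, of dimension $s$.

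Combining,
\[
\dim_\R L_\infty = rn(n-1) + 2sn^2 + (r+2s) - s = rn(n-1) + 2sn^2 + r + s,
\]
and subtracting from $\dim_\R H(\R)$ yields
\[
\dim_\R(X_\beta) = rn^2 + rn - r + 2sn^2 - s = r(n^2+n-1) + s(2n^2-1),
\]
as required. The argument is purely a bookkeeping exercise; the only mild subtlety is correctly handling the overlap of $Z_\infty$ with $C_\infty$ (nontrivial only at complex places, via $U(1)$ embedded diagonally), so that one does not double-count the center's contribution to $L_\infty$.
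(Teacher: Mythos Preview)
Your proof is correct and follows essentially the same approach as the paper: a direct real-dimension count of the Lie groups involved, with the only subtlety being the overlap $C_\infty \cap Z_\infty$ at complex places. The paper organizes the computation place-by-place (writing the contribution at each real place as $[\GL_n(\R)^2]/[O_n(\R)^2 \cdot \R^\times]$ and at each complex place as $[\GL_n(\C)^2]/[U_n(\C)^2 \cdot \C^\times]$), while you sum globally over $H(\R)$ and $L_\infty$; these are the same calculation presented in two equivalent ways.
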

\begin{proof}
	At each real place, we get a contribution of $[\GL_n(\R)\times \GL_n(\R)]/[\mathrm{O}_n(\R)\times \mathrm{O}_n(\R)]\R^\times$, which has dimension $2(n^2) - 2(n^2-n)/2 - 1 = n^2 + n - 1$. 
	
	At each complex place, we get $[\GL_n(\C)\times\GL_n(\C)]/[\mathrm{U}_n(\C) \times \mathrm{U}_n(\C)]\C^\times$, which has dimension $2(2n^2) - [2(n^2) + 2 - 1] = 2n^2-1$, noting $\mathrm{dim}_{\R}(\C^\times \cap [\mathrm{U}_n(\C)\times \mathrm{U}_n(\C)]) = 1$.
\end{proof}

\begin{remark}
	Recall the supported range of cuspidal cohomology degrees 
	\[rn^2 + s(2n^2-n) \leq i \leq r(n^2+n-1) + s(2n^2+n-1)\]
	 from \eqref{eq:cuspidal range}. We see that at each real place, there is a contribution of $n^2+n-1$ to the dimension of automorphic cycles, exactly at the top of the range $[n^2, n^2+n-1]$. For complex places, the contribution of $2n^2-1$ is just below the middle of the range $[2n^2-n, 2n^2+n-1]$. 
\end{remark}

\subsection{Evaluation maps, I}\label{sec:evaluations JST}
Still summarising \cite{JST}, we now give a global version. Recall $\kappa_{\bj} : V_\lambda^\vee \to V_{\bj,-\usw-\bj}^H$ from Lemma \ref{lem:branching law}. Let $\cEv_{\bj}^{\mathrm{JST}}$ be the composition
\begin{align}\label{eq:evaluation JST}
	\cEv_{\bj}^{\mathrm{JST}} : \hc{t}(S_K,\cV_\lambda^\vee(\C)) &\otimes \h^0(X_\beta,\cV_{-\bj,\usw+\bj}^H(\C))\\
	  &\xrightarrow{ \  \iota^* \otimes \mathrm{id} \ }\hc{t}(X_\beta,\cV_{\lambda}^\vee(\C)) \otimes \h^0(X_\beta,\cV_{-\bj,\usw+\bj}^H(\C))\notag\\
	  	 &\xrightarrow{ \ \kappa_{\bj}  \otimes \mathrm{id} \ }\hc{t}(X_\beta,\cV_{\bj,-\usw-\bj}^H(\C)) \otimes \h^0(X_\beta,\cV_{-\bj,\usw+\bj}^H(\C))\notag\\
	& \xrightarrow{ \ \cup \ } \hc{t}(X_\beta,\C) \isorightarrow \C.\notag
\notag\end{align}
The final arrow is an integration map $\int_{X_\beta}$ (that we will make precise in \eqref{eq:integration}). 

As in \cite[(4.12)]{JST}, the Lie algebra cohomology yields a natural injective map (and class)
\begin{align*}
	\Theta_\chi : V_{\chi_f}^H \defeq V_{\chi_f,0}^H \longhookrightarrow \h^0(X_\beta,&\cV_{-\bj,\usw+\bj}^H(\C)),\\
	&[V_{\chi_f}^H] \defeq \Theta_\chi(1).
\end{align*}
The choice of $[\omega]$ (from Theorem \ref{thm:non-vanishing}) yields, via \eqref{eq:automorphic cohomology}, an injective map 
\begin{equation}\label{eq:Theta 1}
\Theta_{[\omega]}^K : \pi_f^K \longhookrightarrow \hc{t}(S_K,\cV_\lambda^\vee(\C)).
\end{equation}
The following is the commutative diagram after \cite[Lem.\ 4.11]{JST}. The term $\mathrm{vol}(L_\beta)$ arises because of our normalisation of $\int_{X_\beta}$, compared to their period integral in (4.7) \emph{op.\ cit}.

\begin{proposition}
	We have a commutative diagram
	\[
	\xymatrix@C=30mm{
	\pi_f^K \otimes V_{\chi_f} \ar[d]^{\Theta_{[\omega]}^K\otimes \Theta_\chi} \ar[r]^-{ Z_f^\circ\big(-,\chi_f,1/2\big)}	 & \C\ar[d]^{\frac{w^{\bj}(\det w_n)}{\mathrm{vol}(L_\beta)} \cdot \frac{L\big(\pi\times\chi, 1/2\big)}{\Omega_{\pi,\chi_\infty}}}\\
	\hc{t}(S_K,\cV_\lambda^\vee(\C)) \otimes \h^0(X_\beta,\cV_{-\bj,\usw+\bj}^H(\C)) \ar[r]^-{\cEv_{\bj}^{\mathrm{JST}}} & \C.
	}
\]
\end{proposition}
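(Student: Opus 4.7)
The plan is to reduce the statement to the global unfolding computation of Friedberg--Jacquet, then separate the global zeta integral into its local components and identify the archimedean contribution with the period $\Omega_{\pi,\chi_\infty}$.

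First I would make the integration map $\int_{X_\beta}:\hc{t}(X_\beta,\C)\isorightarrow\C$ concrete. Fix an orientation on $\cX_H$ and a generator of $\wedge^t\fX_H^\C$; then the image of $\cEv_{\bj}^{\mathrm{JST}}$ evaluated on $\Theta_{[\omega]}^K(\varphi_f)\otimes \Theta_\chi(1)$ is, after the pullback $\iota^*$ and application of $\kappa_{\bj}\otimes\mathrm{id}$ followed by cup product, an integral over $X_\beta$ of the differential form whose underlying function at a point $h\in H(\A)$ is
\[
\bigl[(\kappa_{\bj}\otimes\mathrm{id})(\iota(h)\cdot \omega\otimes \varphi_f,\, h\cdot [V_{\chi_f}^H])\bigr].
\]
Unwinding the definition of $\Theta_\chi$ (i.e.\ the Lie algebra cohomology class associated to $V_{\chi_f}^H$) and the pairing between $V^H_{\bj,-\usw-\bj}$ and its dual, this integrand becomes $\omega(\iota(h))\cdot \varphi_f(h)\cdot\chi_f(\det h_1)\eta_f(\det h_2)^{-1}\chi_f(\det h_2)^{-1}$, up to the twist that encodes $V_{\chi_f,0}^H$. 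So the cup product on $X_\beta$ is, up to the normalisation constant $\mathrm{vol}(L_\beta)$ coming from passing between sums over $H(\Q)\backslash H(\A)/L_\beta L_\infty^\circ$ and integrals over $H(\Q)Z_G(\R)^\circ\backslash H(\A)$, exactly the global Friedberg--Jacquet period
\[
\int_{H(\Q)Z_G(\R)^\circ\backslash H(\A)}\varphi(\iota(h))\,\chi(\det h_1)\,(\eta\chi)^{-1}(\det h_2)\,\omega\bigl(\iota(h)\bigr)\,dh.
\]

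Next I would invoke the global unfolding of \cite{FJ93}: by the existence of a Shalika model, this period unfolds to a product of local zeta integrals against $\cS_{\psi_v}^{\eta_v}(\varphi_v)$ times $L(\pi\times\chi,1/2)$. Splitting off $v\mid\infty$ from $v\nmid\infty$ and using the Haar measure $\mu$ fixed in \S\ref{sec:local zeta infinity}, this reads
\[
Z_\mu^\circ(\omega\otimes\varphi_\infty\otimes 1,\chi_\infty,1/2)\cdot Z_f^\circ(\varphi_f,\chi_f,1/2)\cdot L(\pi\times\chi,1/2).
\]
The archimedean factor, composed with $\kappa_{\bj}$ as in the definition of $\cP_{\chi_\infty}$ in \S\ref{sec:local zeta infinity}, is exactly $\cP_{\chi_\infty}([\omega])=\Omega_{\pi,\chi_\infty}^{-1}$. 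Rearranging yields the claimed commutativity, with the precise factor $\mathrm{vol}(L_\beta)^{-1}\cdot L(\pi\times\chi,1/2)/\Omega_{\pi,\chi_\infty}$ on the right vertical arrow.

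The main obstacle is bookkeeping of normalisations: matching the cohomological cup product on $X_\beta$ to the measure-theoretic period of \cite{JST,FJ93} inevitably produces a volume term, and the cleanest way to see $\mathrm{vol}(L_\beta)^{-1}$ appear is to compare our definition of $\int_{X_\beta}$ against the integral presentation (4.7) of \cite{JST}. Once that comparison is fixed, the rest of the identification is a direct transcription of the Friedberg--Jacquet unfolding and the definition of $\kappa_{\bj}$. No difficulty should arise from the choices at $p$, since they only affect $\varphi_f\in\pi_f^K$ and are absorbed into the finite local zeta integrals via Propositions \ref{prop:FJ} and \ref{prop:zeta p}.
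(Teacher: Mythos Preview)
Your proposal is correct and follows exactly the route the paper indicates: the paper does not give its own proof but simply cites the commutative diagram after \cite[Lem.~4.11]{JST}, noting that the extra factor $\mathrm{vol}(L_\beta)^{-1}$ arises from comparing the cohomological normalisation of $\int_{X_\beta}$ with the period integral (4.7) \emph{op.\ cit.} Your sketch reconstructs precisely that argument---unfold the period via Friedberg--Jacquet, split into local zeta integrals, and recognise the archimedean contribution as $\cP_{\chi_\infty}([\omega])=\Omega_{\pi,\chi_\infty}^{-1}$---and you correctly isolate the volume bookkeeping as the only nontrivial point.
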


Finally, we rephrase in language we will consider in the next sections, and summarise all the above results. Let
\begin{equation}\label{eq:Ev chi 1}
	\cEv_{\chi}^{\mathrm{JST}} : \hc{t}(S_K,\cV_\lambda^\vee(\C)) \longrightarrow \C, \qquad \phi \longmapsto \cEv_{\bj}^{\mathrm{JST}}\big(\phi \otimes [V_{\chi_f}^H]\big).
\end{equation}
From Proposition \ref{prop:FJ}, we see immediately that:
\begin{corollary}\label{cor:critical L-value}
If $\varphi_f = \varphi_f^{\mathrm{FJ},(p)} \otimes (\otimes_{\pri|p}\varphi_{\pri}) \in \pi_f^K$, then for all $\chi \in \mathrm{Crit}(\pi)$ with infinity type $\bj$, we have
\begin{equation}
	\cEv_{\chi}^{\mathrm{JST}}\circ \Theta_{[\omega]}^K(\varphi_f) = \frac{w^{\bj}(\det w_n) }{\mathrm{vol}(L_\beta)} \cdot \prod_{v\nmid p\infty}\chi(\varpi_v^{\delta_v})^{-n}\cdot \prod_{\pri|p}Z_{\pri}\big(\varphi_{\pri},\chi_{\pri},\tfrac12\big) \cdot \frac{L^{(p)}\big(\pi\times\chi,\tfrac12\big)}{\Omega_{\pi,\chi_{\infty}}}.
\end{equation}
\end{corollary}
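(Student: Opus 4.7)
The plan is straightforward: combine the commutative diagram of the preceding proposition with the factorisation of $Z_f^\circ$ supplied by Proposition \ref{prop:FJ}. First, I would unwind \eqref{eq:Ev chi 1}: by definition, $\cEv_\chi^{\mathrm{JST}}(\phi) = \cEv_{\bj}^{\mathrm{JST}}(\phi \otimes [V_{\chi_f}^H])$, and by construction $[V_{\chi_f}^H] = \Theta_\chi(1)$. Evaluating the commutative square on the element $\varphi_f \otimes 1 \in \pi_f^K \otimes V_{\chi_f}$ with $\phi = \Theta_{[\omega]}^K(\varphi_f)$ yields
\[
\cEv_\chi^{\mathrm{JST}} \circ \Theta_{[\omega]}^K(\varphi_f) \;=\; \mathrm{vol}(L_\beta)^{-1} \cdot \frac{L(\pi\times\chi, 1/2)}{\Omega_{\pi,\chi_\infty}} \cdot Z_f^\circ(\varphi_f, \chi_f, 1/2).
\]

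Next, I would convert the normalised integral $Z_f^\circ$ back to $Z_f$ via $Z_f^\circ = Z_f / L(\pi_f\otimes\chi_f, 1/2)$ and feed in Proposition \ref{prop:FJ}. That proposition expresses $Z_f(\varphi_f, \chi_f, 1/2)$ as the product of the factor $\prod_{v\nmid p\infty}\chi(\varpi_v^{\delta_v})^{-n}$ (coming from the explicit values of $Z_v^\circ$ on the Friedberg--Jacquet test vectors in Proposition \ref{prop:FJ test vector}), the local integrals $Z_{\pri}(\varphi_{\pri},\chi_{\pri},1/2)$ at primes above $p$, and the partial $L$-function $L^{(p)}(\pi_f\otimes\chi_f,1/2)$. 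Dividing through by $L(\pi_f\otimes\chi_f,1/2) = L^{(p)}(\pi_f\otimes\chi_f,1/2) \cdot \prod_{\pri|p} L(\pi_{\pri}\otimes\chi_{\pri},1/2)$ cancels the off-$p$ $L$-factor and leaves a product of local normalised integrals at $p$.

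Substituting back, the Euler factors $\prod_{\pri|p} L(\pi_{\pri}\otimes\chi_{\pri},1/2)$ in the denominator recombine with $L(\pi\times\chi, 1/2)$ in the numerator to yield exactly $L^{(p)}(\pi\times\chi, 1/2)$, and the archimedean factor is absorbed into $\Omega_{\pi,\chi_\infty}$, producing the right-hand side of the corollary. As the author's phrasing "we see immediately" indicates, this is bookkeeping rather than substance; there is no genuine obstacle, only the need to track carefully where each $p$-local Euler factor resides.
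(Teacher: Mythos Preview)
Your proposal is correct and matches the paper's approach exactly: the paper simply says ``From Proposition~\ref{prop:FJ}, we see immediately,'' and your fleshing-out of this---chase the commutative diagram to get $\mathrm{vol}(L_\beta)^{-1}\cdot Z_f^\circ(\varphi_f,\chi_f,1/2)\cdot L(\pi\times\chi,1/2)/\Omega_{\pi,\chi_\infty}$, then substitute Proposition~\ref{prop:FJ}---is precisely the intended bookkeeping. One small imprecision: your final remark that ``the archimedean factor is absorbed into $\Omega_{\pi,\chi_\infty}$'' is unnecessary, since $L(\pi\times\chi,1/2)\big/\prod_{\pri|p}L(\pi_{\pri}\times\chi_{\pri},1/2)$ is already $L^{(p)}(\pi\times\chi,1/2)$ by definition, with whatever archimedean convention is in force carried through consistently on both sides.
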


Here the term $w^{\bj}(\det w_n)$ is coming from \eqref{eq:omega pi}.


\section{Evaluation maps}\label{sec:evaluations}

We now give abstract generalisations of $\cEv_{\chi}^{\mathrm{JST}} $. To connect the above to the study in \cite{BDW20}, we first give a more pedestrian reinterpretation of $\cEv_{\chi}^{\mathrm{JST}} $, before describing analogues with \emph{$p$-adic} local systems. As in the last section, $\pi$ will be a RASCAR with an $(\eta,\psi)$-Shalika model, of weight $\lambda$, with purity weight $\usw$. Let $\chi$ be an algebraic Hecke character of conductor $p^{\beta_0}$, with $\beta_0 = (\beta_{0,\pri})_{\pri|p} \in \Z^{\pri|p}_{\geq 0}$, and infinity type $\bj$ critical for $\pi$. Again set $\beta_{\pri} = \max(1,\beta_{0,\pri})$ and let $\beta = (\beta_{\pri}) \in \Z_{\geq 1}^{\pri|p}$.

We want to work with $p$-adic, rather than complex, coefficients; so recall we fixed an isomorphism $i_p : \C \to \overline{\Q}_p$. Via this isomorphism, we identify $V_\lambda^\vee(\C)$ and $V_\lambda^\vee(\overline{\Q}_p)$, and let
\begin{equation}\label{eq:Theta p}
	\Theta_{[\omega]}^{K,i_p} = i_p \circ \Theta_{[\omega]}^K : \pi_f^K \longhookrightarrow \hc{t}(S_K,\cV_\lambda^\vee(\overline{\Q}_p))
\end{equation}
for $\Theta_{[\omega]}^K$ the map from \eqref{eq:Theta 1}.

\subsection{Automorphic cycles revisited}
\subsubsection{Level structures}
We now specify the levels $L_\beta$. Let $K = \prod_v K_v \subset G(\A_f)$, such that:
\begin{itemize}\s
\item[(i)] For $v \nmid p\infty$, we take $K_v$ fixing $\varphi_v^{\mathrm{FJ}}$ (from \S\ref{sec:FJ finite});
\item[(ii)] For $\pri|p$, we take $K_{\pri} = J_{\pri}$ parahoric (as in \S\ref{sec:notation}).
\end{itemize}

Let $u \in G(\A_f)$ be the matrix with $u_{v} = 1$ if $v\nmid p$, and $u_{\pri} = \smallmatrd{1}{w_n}{0}{1}$ for all $\pri|p$. For a multiexponent $\beta = (\beta_{\pri})_{\pri|p}$, let $L_\beta = L^{(p)} \prod_{\pri|p} L_{\pri}^{\beta_{\pri}} \subset H(\A_f)$, where (taking all intersections with $H$ with respect to $\iota$):
\begin{itemize}\s
	\item[(i)] away from $p$, $L^{(p)} \subset G(\A_f^{(p)})$ is the principal congruence subgroup of some (fixed, suppressed) prime-to-$p$ ideal $\m \subset \cO_F$, chosen so that $L^{(p)} \subset H(\A_f^{(p)}) \cap K^{(p)}$; 
	\item[(ii)] and $L_{\pri}^{\beta_{\pri}} = H(\Zp) \cap K_{\pri} \cap \Big(u_{\pri}^{-1}t_{\pri}^{\beta_{\pri}} \cdot K_{\pri} \cdot t_{\pri}^{-\beta_{\pri}}u_{\pri} \Big).$
\end{itemize}
We take $\m$ large enough that the three conditions after Definition \ref{def:auto cycle} are satisfied, and as \emph{op.\ cit.}, we let $X_\beta \defeq X_{L_\beta}$, the \emph{automorphic cycle of level $p^\beta$}.

\begin{remark}
	The matrix $u_{\pri}$ is an open-orbit representative of the spherical variety $G/H$ (that is, $B(F_{\pri}) \cdot u_{\pri} \cdot H(F_{\pri})$ is open in $G(F_{\pri})$). The matrix $t_{\pri}$ induces the action of the $U_{\pri}$ Hecke operator.
\end{remark}

\subsubsection{Connected components and fundamental classes}

\begin{lemma}\label{lem:1 mod p}
	The map $(\det_1/\det_2, 1/\det_2)$ descends to a surjective map
	\[
	L_\beta \longtwoheadrightarrow \big(1 + p^\beta\m\widehat{\cO}_F\big) \times \big(1 + \m\widehat{\cO}_F\big) \subset \A_F^\times \times \A_F^\times.
	\]
\end{lemma}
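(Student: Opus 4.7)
The plan is to compute the image place-by-place, and to prove surjectivity by writing down explicit diagonal preimages. The map $(\det_1/\det_2, 1/\det_2)$ clearly lands in $\widehat{\cO}_F^\times \times \widehat{\cO}_F^\times$ on $L_\beta$ since $L_\beta \subset H(\widehat{\cO}_F)$, so it remains to pin down the exact congruence conditions coming from each local factor.

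At a place $v \nmid p$, elements of $L^{(p)}_v$ are pairs $(h_1,h_2)$ with $h_i \equiv I_n \pmod{\m_v}$, so $\det(h_i) \equiv 1 \pmod{\m_v}$, whence both components of the map lie in $1 + \m_v\cO_v$. At $\pri | p$, the key is to unpack the intersection defining $L_{\pri}^{\beta_{\pri}}$. For $\iota(h_1,h_2) = \mathrm{diag}(h_1,h_2)$, a direct calculation gives
\[
u_{\pri}\,\iota(h_1,h_2)\,u_{\pri}^{-1} = \begin{pmatrix} h_1 & w_n h_2 - h_1 w_n \\ 0 & h_2 \end{pmatrix},
\]
and $t_{\pri}^{\beta_{\pri}} J_{\pri} t_{\pri}^{-\beta_{\pri}}$ consists of block matrices $\smallmatrd{A}{\varpi_{\pri}^{\beta_{\pri}} B}{\varpi_{\pri}^{-\beta_{\pri}}C}{D}$ with $A,D \in \GL_n(\cO_{\pri})$, $B \in M_n(\cO_{\pri})$, $C \in M_n(\pri)$. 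The zero lower-left block is automatically in $\varpi_{\pri}^{-\beta_{\pri}}M_n(\pri)$, so the only real condition is $w_n h_2 - h_1 w_n \in \varpi_{\pri}^{\beta_{\pri}}M_n(\cO_{\pri})$, i.e.\ $h_1 \equiv w_n h_2 w_n^{-1} \pmod{\pri^{\beta_{\pri}}}$. Taking determinants, $\det(h_1) \equiv \det(h_2) \pmod{\pri^{\beta_{\pri}}}$, so $\det(h_1)/\det(h_2) \in 1 + \pri^{\beta_{\pri}}\cO_{\pri}$, while $1/\det(h_2)$ is an arbitrary element of $\cO_{\pri}^\times$. Since $\m$ is coprime to $p$, assembling the local conditions gives exactly that the image lands in $(1+p^\beta\m\widehat{\cO}_F) \times (1+\m\widehat{\cO}_F)$.

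For surjectivity, given $(x,y)$ in the target, I would produce a preimage $(h_1, h_2)$ componentwise via diagonal matrices. At $v \nmid p$, take $h_{2,v} = \mathrm{diag}(y_v^{-1},1,\ldots,1)$ and $h_{1,v} = \mathrm{diag}(x_vy_v^{-1},1,\ldots,1)$, both of which lie in $I_n + \m_v M_n(\cO_v)$. At $\pri | p$, exploit the fact that conjugation by $w_n$ reverses the diagonal: set $h_{2,\pri} = \mathrm{diag}(y_{\pri}^{-1},1,\ldots,1)$, so $w_n h_{2,\pri} w_n^{-1} = \mathrm{diag}(1,\ldots,1,y_{\pri}^{-1})$, and take $h_{1,\pri} = \mathrm{diag}(x_{\pri},1,\ldots,1,y_{\pri}^{-1})$. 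Since $x_{\pri} \equiv 1 \pmod{\pri^{\beta_{\pri}}}$ by hypothesis, the congruence $h_{1,\pri} \equiv w_n h_{2,\pri} w_n^{-1} \pmod{\pri^{\beta_{\pri}}}$ holds, and $\det(h_{1,\pri})/\det(h_{2,\pri}) = x_{\pri}$, $1/\det(h_{2,\pri}) = y_{\pri}$ by construction.

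The only non-formal step is the conjugation computation identifying $L_{\pri}^{\beta_{\pri}}$ with pairs $(h_1,h_2)$ satisfying $h_1 \equiv w_n h_2 w_n^{-1} \pmod{\pri^{\beta_{\pri}}}$; once this is in hand, both containment and surjectivity fall out by inspection. I expect the main place to be careful is the verification that the lower-left block condition in $t_{\pri}^{\beta_{\pri}}J_{\pri}t_{\pri}^{-\beta_{\pri}}$ is vacuous for block-diagonal elements, so that no hidden constraint is missed.
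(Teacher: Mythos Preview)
Your argument is correct and is essentially the standard one; the paper itself gives no proof, deferring to \cite[Lem.~2.1]{DJR18} and \cite[Lem.~4.5]{BDGJW}, and the key congruence you extract, $h_1 \equiv w_n h_2 w_n^{-1} \pmod{\pri^{\beta_{\pri}}}$, is exactly what is invoked later in the paper (proof of Lemma~\ref{lem:kappa equivariant}) via that same citation. Your explicit surjectivity construction is clean; the only cosmetic caveat is that for $n=1$ the formula $h_{1,\pri} = \mathrm{diag}(x_{\pri},1,\ldots,1,y_{\pri}^{-1})$ collapses and should be read as $x_{\pri}y_{\pri}^{-1}$.
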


\begin{proof}
	Identical to \cite[Lem.\ 2.1]{DJR18} and \cite[Lem.\ 4.5]{BDGJW}.
\end{proof}

By strong approximation for $H$, the connected components of $X_\beta$ are indexed by 
\begin{equation}\label{eq:component group}
	\pi_0(X_\beta) \defeq \cl_F^+(p^\beta \m) \times \cl_F^+( \m),
\end{equation}
where $\cl_F^+(I)$ is the narrow ray class group of conductor $I$.  For $\delta \in H(\A_f)$, we write $[\delta]$ for its associated class in $\pi_0(X_\beta)$ and denote the corresponding connected component
\[
X_\beta[\delta] \defeq H(\Q)\backslash H(\Q)\delta L_\beta H(\R)^\circ/L_\beta L_\infty^\circ.
\]

We can describe these components as arithmetic quotients of the symmetric space $\cX_H = H(\R)^\circ/L_\infty^\circ$ from \S\ref{sec:local zeta infinity}. For $\beta \in \Z_{\geq 1}$ and $\delta \in H(\A_f)$, define a congruence subgroup
\begin{equation}\label{eq:gamma beta delta}
	\Gamma_{\beta,\delta} \defeq H(\Q) \cap \delta L_\beta H(\R)^\circ \delta^{-1}.
\end{equation} 
The group $\Gamma_{\beta,\delta}$  acts on $\cX_H$ by left translation via $\Gamma_{\beta,\delta} \hookrightarrow H(\R)^\circ$, and there is an isomorphism
\[
c_\delta : \Gamma_{\beta,\delta}\backslash \cX_H \isorightarrow X_\beta[\delta] \subset X_\beta, \ \ \ [h_\infty]_\delta \mapsto [\delta h_\infty],
\]
where if $[h_\infty] \in \cX_H$, we write $[h_\infty]_\delta$ for its image in $\Gamma_{\beta,\delta}\backslash \cX_H$.

Since $\Gamma_{\beta,\delta}\backslash \cX_H$ is a connected orientable manifold of dimension $t$, there is an isomorphism
\[
 \h_t^{\mathrm{BM}}(\Gamma_{\beta,\delta}\backslash \cX_H,\Z) \cong \Z.
\]
A \emph{fundamental class} $\theta_\delta \in  \h_t^{\mathrm{BM}}(\Gamma_{\beta,\delta}\backslash \cX_H,\Z)$ is a generator of this group; choosing such a class is equivalent to choosing an orientation on $\Gamma_{\beta,\delta}\backslash \cX_H$. Following the process described in \cite[\S2.2.5]{DJR18} and \cite[\S4.2.3]{BDW20}, we choose a family $(\theta_\delta)_\delta$ of such classes compatibly as $\delta$ (and $\beta$) varies. Choose, once and for all, an ordered basis of the tangent space of the symmetric space $H_\infty^\circ/L_\infty^\circ$, which induces -- for any $\beta$ -- a fundamental class $\theta[\beta]$ for $X_\beta[1] \subset X_\beta$, the connected component of the identity. For any $\delta$, let $\theta_{[\delta]} = \delta_*\theta[\beta]$, which is a fundamental class for $X_\beta[\delta]$. Then we finally define $\theta_\delta = c_\delta^*\theta_{[\delta]}$. 

For each $\delta$, the choice of $\theta_\delta$ yields an isomorphism
\[
(- \cap \theta_\delta) : \hc{t}(\Gamma_{\beta,\delta}\backslash \cX_H,\Z) \isorightarrow \Z.
\]

Our choices, and the maps $c_\delta$, induce an integration map promised in \eqref{eq:evaluation JST}:
\begin{equation}\label{eq:integration}
	\int_{X_\beta} \defeq \sum_\delta (-\cap \theta_\delta) \circ c_\delta^* : \hc{t}(X_\beta,\Z) \longmapsto \Z
\end{equation}

\subsection{The map $\cEv_{\chi}^{\mathrm{JST}} $ revisited}

Now let $\Qp\subset L \subset \Qpbar$ containing $\Q(\pi,\eta)$. For $p$-adic interpolation we must twist the map $\iota : X_\beta \to S_K$. Define
\[
	\iota_\beta : X_\beta \longrightarrow S_K, \qquad [h] \mapsto [hu^{-1}t_p^{\beta}], \qquad h \in H(\A), \ \ t_p = \prod_{\pri|p}t_{\pri}^{\beta_{\pri}}.
\]
One may check $\iota_\beta^*\cV_\lambda^\vee = \iota^*\cV_\lambda^\vee$, so for $\delta \in H(\A)$, we have maps
\[
\hc{t}(S_K,\cV_\lambda^\vee(L)) \xrightarrow{ \ \iota_\beta^* \ } \hc{t}(X_\beta,\iota^*\cV_\lambda^\vee(L)) \xrightarrow{ \ c_\delta^* \ } \hc{t}(\Gamma_{\beta,\delta}\backslash \cX_H, c_\delta^*\iota^*\cV_{\lambda}^\vee(L)),
\]
where $c_\delta^*\iota^*\cV_\lambda^\vee$ can be checked to be the local system given by locally constant sections of $\Gamma_{\beta,\delta}\backslash [\cX_H \times V_\lambda^\vee(L)] \to \Gamma_{\beta,\delta}\backslash \cX_H$ with action $\gamma([h_\infty],v) = ([\gamma h_\infty], \gamma \cdot v)$ (cf.\ \S\ref{sec:local systems}). 

If $M$ is a left $\Gamma_{\beta,\delta}$-module, let  $M_{\Gamma_{\beta,\delta}} \defeq M/\{m - \gamma \cdot m : m \in M, \gamma \in \Gamma_{\beta,\delta}\}$ be the coinvariants. The quotient map $V_\lambda^\vee(L) \to (V_{\lambda}^\vee(L))_{\Gamma_{\beta,\delta}}$ trivialises the local system, inducing a map
	\[
		\mathrm{coinv}_{\beta,\delta} : \hc{t}(\Gamma_{\beta,\delta}\backslash \cX_H, c_\delta^*\iota^*\cV_{\lambda}^\vee(L)) \longrightarrow \hc{t}(\Gamma_{\beta,\delta}\backslash \cX_H, \Z) \otimes_{\Z} (V_{\lambda}^\vee(L))_{\Gamma_{\beta,\delta}}.
	\]

\begin{definition}\label{def:arch evaluation map}
 Define
	\begin{align}\label{eq:evaluation definition 2}
		\cEv_{\beta,\delta}   : \htc(S_K,\cV_\lambda^\vee(L)) & \xrightarrow{\ c_\delta^* \circ (\iota_\beta)^*\ } \hc{t}(\Gamma_{\beta,\delta}\backslash\cX_H,c_\delta^*\iota^*\cV_\lambda^\vee(L))\\
		& \xrightarrow{\mathrm{coinv}_{\beta,\delta}}  \hc{t}(\Gamma_{\beta,\delta}\backslash\cX_H, \Z)\otimes_{\Z} (V_\lambda^\vee(L))_{\Gamma_{\beta,\delta}}\\
		& \labelisorightarrow{ - \cap \theta_\delta} (V_\lambda^\vee(L))_{\Gamma_{\beta,\delta}}. \notag
	\end{align}
\end{definition}

Recall $\VH$ is the algebraic $H$-representation of highest weight $(\bj,-\usw-\bj)$. Recalling $w_p^{\bj}$ from \eqref{eq:w^j}, $\VH(L)$ is an $H(\Qp)$-representation, with $h =(h_1,h_2) \in H(\Qp)$ acting by 
\begin{equation}\label{eq:wjw}
w_p^{\bj}(\det(h_1)) w_p^{-\usw-\bj}(\det(h_2)) \defeqrev (w_p^{\bj} \otimes w_p^{-\usw-\bj})(h).
\end{equation}
Recall we chose (via Lemma \ref{lem:branching law}) a non-zero element $\kappa_{\bj} \in \Hom_{H(\Qp)}(V_\lambda^\vee(L),\VH(L))$.

\begin{lemma}\label{lem:coinvariants}
The map $\kappa_{\bj}$ induces a map $\kappa_{\bj} :  (V_\lambda^\vee)_{\Gamma_{\beta,\delta}} \to \VH(L)$.
\end{lemma}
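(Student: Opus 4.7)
The plan exploits the $H(\Qp)$-equivariance of $\kappa_{\bj}$ (Lemma \ref{lem:branching law}): for any $h\in H(\Qp)$ and $v\in V_\lambda^\vee(L)$,
\[
\kappa_{\bj}(h\cdot v) \;=\; (w_p^{\bj}\otimes w_p^{-\usw-\bj})(h)\cdot \kappa_{\bj}(v),
\]
where the character of $H(\Qp)$ is as in \eqref{eq:wjw}. Since $\Gamma_{\beta,\delta}\subset H(\Q)$ acts on $V_\lambda^\vee(L)$ through the embedding $H(\Q)\hookrightarrow H(\Qp)$, the same identity holds for $\gamma\in\Gamma_{\beta,\delta}$. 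The lemma will follow once I verify that the character $(w_p^{\bj}\otimes w_p^{-\usw-\bj})|_{\Gamma_{\beta,\delta}}$ is trivial: then $\kappa_{\bj}$ is constant on $\Gamma_{\beta,\delta}$-orbits and descends to the claimed map from $(V_\lambda^\vee)_{\Gamma_{\beta,\delta}}$ into the $1$-dimensional space $\VH(L)$ itself, rather than merely into its own coinvariants.

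To analyse $\Gamma_{\beta,\delta}$, I would invoke the third bullet after Definition \ref{def:auto cycle}, which forces $\Gamma_{\beta,\delta}\subseteq Z_G(\Q)\cap L_\beta L_\infty^\circ$. Hence every $\gamma\in\Gamma_{\beta,\delta}$ is a central scalar $\gamma=aI_{2n}$ for some $a\in F^\times$. Applying Lemma \ref{lem:1 mod p} to $\gamma$ viewed in $L_\beta$, and unravelling the conditions in $L_\infty^\circ$, shows that $a$ is totally positive at every real place and that $a^n\in 1+\m\widehat{\cO}_F$ as a finite idele; in particular $a^n$ is a totally positive global unit congruent to $1\pmod{\m}$. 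A direct computation on $\gamma=(aI_n,aI_n)\in H(\Qp)$ gives
\[
(w_p^{\bj}\otimes w_p^{-\usw-\bj})(\gamma) \;=\; w_p^{\bj}(a^n)\cdot w_p^{-\usw-\bj}(a^n) \;=\; w_p^{-n\usw}(a),
\]
independently of $\bj$, which is reassuring since $\gamma$ is central in $G$ and its action on $V_\lambda^\vee(L)$ is the central character on $Z_G$.

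The main obstacle is the remaining step: verifying $w_p^{-n\usw}(a)=1$ for every such $a$. As $a\in F^\times$ is global and $w^{-n\usw}$ is algebraic, $w_p^{-n\usw}(a)=i_p(w_\infty^{-n\usw}(a))$, reducing the assertion to an archimedean one, in which purity of $\lambda$ plays the key role. When $F$ is totally real the $\sw_\sigma=\sw$ are all equal, so $w_\infty^{-n\usw}$ is a power of $\mathrm{Nm}_{F/\Q}$ and is identically $+1$ on totally positive units; for general $F$, I would use the interplay of the individual purity conditions $\sw_\sigma+\sw_{c\sigma}=2\sw$ on $\Sigma_{\C}$ with Dirichlet's unit theorem, together with a compatible further enlargement of the prime-to-$p$ ideal $\m$, to force the required vanishing. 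This final archimedean check is where the real content sits; every other step is formal equivariance bookkeeping.
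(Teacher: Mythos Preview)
Your reduction to central elements has a genuine gap. The third bullet after Definition \ref{def:auto cycle} concerns the group $H(\Q)\cap \delta L_\beta L_\infty^\circ \delta^{-1}$, where $L_\infty^\circ$ is the identity component of the \emph{compact} group $H(\R)\cap K_\infty$. By contrast, $\Gamma_{\beta,\delta}=H(\Q)\cap \delta L_\beta H(\R)^\circ \delta^{-1}$ (see \eqref{eq:gamma beta delta}) uses the full connected component $H(\R)^\circ$, and is a genuine (infinite) arithmetic subgroup of $H$, certainly not contained in $Z_G(\Q)$. A general $\gamma=(\gamma_1,\gamma_2)\in\Gamma_{\beta,\delta}$ is far from scalar, so the computation collapsing everything to $w_p^{-n\usw}(a)$ does not apply, and the final ``archimedean check'' you sketch is addressing the wrong group.

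The paper's proof works directly with such non-central $\gamma$. From Lemma \ref{lem:1 mod p} one extracts $\det(\gamma_1\gamma_2^{-1})\equiv 1\pmod{p^\beta}$ and $\det(\gamma_2)\equiv 1\pmod{\m}$, both being totally positive global units. The key input you are missing is the \emph{existence} of the global Hecke characters $\chi$ (infinity type $\bj$, conductor dividing $p^\beta$) and $\eta$ (infinity type $\usw$, conductor dividing $\m$): a totally positive unit $u$ congruent to $1$ modulo the conductor of such a character is a principal idele in its kernel, and comparison with the archimedean component forces $w^{\bj}(\det(\gamma_1\gamma_2^{-1}))=1$ and $w^{\usw}(\det(\gamma_2))=1$. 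Writing
\[
(w_p^{\bj}\otimes w_p^{-\usw-\bj})(\gamma)=w_p^{\bj}\big(\det(\gamma_1\gamma_2^{-1})\big)\cdot w_p^{-\usw}\big(\det(\gamma_2)\big)
\]
then gives triviality of the $\Gamma_{\beta,\delta}$-action on $\VH(L)$ for all $\gamma$, without centrality and without further enlarging $\m$.
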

\begin{proof}
	It suffices to prove $\Gamma_{\beta,\delta}$ acts trivially on the target. If $\gamma = (\gamma_1,\gamma_2) \in \Gamma_{\beta,\delta}$, then 
	\[
	\det(\gamma_1),\det(\gamma_2) \in \cO_{F,+}^\times.
	\]
	 Moreover $\det(\gamma_1/\gamma_2) \equiv 1 \newmod{p^\beta}$  and $\det(\gamma_2) \equiv 1 \newmod{\m}$ (by Lemma \ref{lem:1 mod p}).

By assumption there exists an algebraic Hecke character $\chi$ of infinity type $\bj$ and conductor dividing $p^\beta$; this forces $w_p^{\bj}(\det(\gamma_1/\gamma_2)) = w^{\bj}(\det(\gamma_1/\gamma_2)) = 1$, as in \cite[\S5.2.3]{BW_CJM}. Similarly, existence of $\eta$ with infinity type $\usw$ and conductor dividing $\m$ forces $w_p^{-\usw}(\det(\gamma_2)) = 1$. Thus $(w_p^{\bj}\otimes w_p^{-\usw-\bj})(\gamma) = 1$, i.e.\ $\gamma$ acts trivially on $\VH(L)$, as required.
\end{proof}

\begin{definition}\label{def:ev_chi}
	Let	\begin{align*}
		\cEv_{\chi} : \hc{t}(S_K,\cV_\lambda^\vee(L)) &\longrightarrow L(\chi), \\
		\phi &\longmapsto \sum_{\delta\in\pi_0(X_\beta)} (\chi\otimes\chi^{-1}\eta^{-1})(\delta) \cdot \Big[\kappa_{\bj}\circ \cEv_{\beta,\delta}(\phi)\Big].
	\end{align*}
	Here we have chosen a basis $u_{\bj}$ of $V_{\bj,-\usw-\bj}^H(L)$ to identify it with $L(\chi)$. Via $i_p$, we can do this compatibly with the choices in \S\ref{sec:evaluations JST}.
\end{definition}

By considering how all these maps behave on the local systems, we see that actually this `new' construction is just a twisted version of the map $\cEv_{\chi}^{\mathrm{JST}}$ from \eqref{eq:Ev chi 1} (hence it is independent of the choices of $\delta$). Precisely, recall 
\[	
	u^{-1}t_p^\beta = \textstyle\prod_{\pri|p}u^{-1}t_{\pri}^{\beta_{\pri}} \subset G(\Qp), \qquad \text{ and let } \qquad K_\beta \defeq u^{-1}t_p^\beta K t_p^{-\beta} u.
\]
 Note $L_\beta \subset K_\beta$, so we can apply $\cEv_{\chi}^{\mathrm{JST}}$ at level $K_\beta$. If $\varphi_f \in \pi^K$, then $u^{-1}t_p^\beta \cdot \varphi_f \in \pi_f^{K_\beta}$, and:

\begin{lemma}\label{lem:Ev 1 vs Ev 2}
	The following diagram commutes:
	\[
		\xymatrix@C=30mm{
			\pi_f^K \ar[r]^-{\Theta_{[\omega]}^{K,i_p}}\ar[d]_{\varphi_f \mapsto u^{-1}t_p^\beta \cdot \varphi_f} & \hc{t}(S_K,\cV_\lambda^\vee(\Qpbar)) \ar[r]^-{\cEv_\chi} & \Qpbar\ar[d]^{i_p^{-1}}\\
			\pi_f^{K_\beta} \ar[r]^-{\Theta_{[\omega]}^{K_\beta}} & \hc{t}(S_{K_\beta},\cV_\lambda^\vee(\C)) \ar[r]^-{\cEv_\chi^{\mathrm{JST}}} & \C.
		}
	\]
\end{lemma}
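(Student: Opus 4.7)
The strategy is to factor the twisted inclusion $\iota_\beta : X_\beta \to S_K$ as $\iota_\beta = \tau \circ \iota$, where $\iota : X_\beta \hookrightarrow S_{K_\beta}$ is the natural inclusion (used in $\cEv_\chi^{\mathrm{JST}}$ at level $K_\beta$) and $\tau : S_{K_\beta} \to S_K$, $[g] \mapsto [g \cdot u^{-1}t_p^\beta]$ is the isomorphism arising from $K_\beta = (u^{-1}t_p^\beta) K (u^{-1}t_p^\beta)^{-1}$. Since the archimedean local system $\cV_\lambda^\vee$ depends only on the $G(\Q)$-action, and right-translation by $G(\A_f)$ commutes with this action, there is a canonical identification $\tau^*\cV_\lambda^\vee \cong \cV_\lambda^\vee$ on $S_{K_\beta}$. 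The left square of the diagram then reduces to the standard compatibility of the cuspidal cohomology map \eqref{eq:automorphic cohomology} with right-translation by $G(\A_f)$, namely
\[
\tau^* \circ \Theta_{[\omega]}^K(\varphi_f) = \Theta_{[\omega]}^{K_\beta}(u^{-1}t_p^\beta \cdot \varphi_f),
\]
after which composing with $i_p$ reduces the problem to showing $\cEv_\chi = i_p^{-1} \circ \cEv_\chi^{\mathrm{JST}} \circ \tau^*$.

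Using the factorization $\iota_\beta^* = \iota^* \circ \tau^*$, both maps begin with the class $\iota^*\tau^*\phi \in \hc{t}(X_\beta, \iota^*\cV_\lambda^\vee)$ and both apply the branching map $\kappa_{\bj}$; they differ only in how $\chi$ is incorporated. The JST construction cups with the class $[V^H_{\chi_f}] \in \h^0(X_\beta, \cV^H_{-\bj,\usw+\bj}(\C))$ and integrates over $X_\beta$ via $\int_{X_\beta}$, while $\cEv_\chi$ sums the component-wise evaluations $\cEv_{\beta,\delta}$ weighted by $(\chi \otimes \chi^{-1}\eta^{-1})(\delta)$. To reconcile these, decompose $\int_{X_\beta} = \sum_{\delta \in \pi_0(X_\beta)} (-\cap \theta_\delta) \circ c_\delta^*$ as in \eqref{eq:integration}. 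By the same argument as Lemma \ref{lem:coinvariants} (using the conductor conditions on $\chi$ and $\eta$), $\Gamma_{\beta,\delta}$ acts trivially on $V^H_{-\bj,\usw+\bj}$, so the restriction of $[V^H_{\chi_f}]$ to $X_\beta[\delta]$ is a locally constant section whose value, under the basis dual to $u_{\bj}$, equals $(\chi \otimes \chi^{-1}\eta^{-1})(\delta)$. This is precisely the defining property of the Lie algebra cohomology class $\Theta_\chi(1)$ under the realization of $\h^0(X_\beta, \cV^H_{-\bj,\usw+\bj}(\C))$ as tuples of $\Gamma_{\beta,\delta}$-invariants across components.

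Cupping with $\kappa_{\bj}(c_\delta^*\iota^*\tau^*\phi)$ via the natural duality pairing $V^H_{\bj,-\usw-\bj} \otimes V^H_{-\bj,\usw+\bj} \to \C$ then extracts exactly the $u_{\bj}$-coefficient, yielding $(\chi \otimes \chi^{-1}\eta^{-1})(\delta) \cdot \kappa_{\bj}(\cEv_{\beta,\delta}(\phi))$ in the notation of Definition \ref{def:ev_chi}. Summing over $\delta \in \pi_0(X_\beta)$ recovers the formula for $\cEv_\chi$, completing the comparison. The main obstacle is not conceptual but bookkeeping: verifying $\tau^*\cV_\lambda^\vee = \cV_\lambda^\vee$ and tracking signs so that the fundamental classes $\theta_\delta$, the cup product, and the dual pairing are all oriented compatibly. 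Since the choices were fixed once and for all compatibly via $i_p$ at the end of Definition \ref{def:ev_chi}, this compatibility is automatic, and the lemma follows.
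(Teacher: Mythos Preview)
Your argument is correct and follows exactly the approach the paper indicates in the sentence preceding the lemma (``by considering how all these maps behave on the local systems''), which is the only justification the paper offers. Your factorisation $\iota_\beta = \tau\circ\iota$, the compatibility $\tau^*\Theta_{[\omega]}^K(\varphi_f)=\Theta_{[\omega]}^{K_\beta}(u^{-1}t_p^\beta\cdot\varphi_f)$, and the componentwise identification of cupping with $[V_{\chi_f}^H]$ against the weighted sum in Definition~\ref{def:ev_chi} are precisely the details the paper leaves implicit.
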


Note that the twisting matrix $u^{-1}t_p^\beta$ is exactly what appeared in Proposition \ref{prop:zeta p}.


\subsection{Abstract evaluation maps} \label{sec:abstract evaluations}

So far we have only considered evaluation maps only with coefficients in $\cV_\lambda^\vee$, the local system attached to $V_\lambda^\vee$ with its $G(\Q)$-action. Ultimately we will consider coefficients in $p$-adic distributions, for which only the $K$-local systems make sense. We now present a version of the above maps for $K$-local systems, with abstract coefficient modules, generalising those constructed in \cite[\S4]{BDW20} (when $F$ is totally real) and \cite[\S10.1]{BW_CJM} (for $\GL_2$).

The constructions/proofs of \cite{BDW20} all go through exactly as \emph{op.\ cit}., so we omit details.

Recall $\Delta_p \subset G(\Qp)$ from \S\ref{sec:*-action}. Let $M$ be a left $\Delta_p$-module, with action denoted $\bullet$. (When $M = \cD_\lambda$, this will be the $*$-action. When $M = V_\lambda^\vee$, recall from \S\ref{sec:actions} that we have two such actions, the $\cdot$- and $*$-actions, and we shall need to consider both). 

The level $K$ acts on $M$ via its projection to $J_p = \prod_{\pri|p}J_{\pri} \subset \Delta_p$, giving a local system $\sM$ on $S_K$ via \S\ref{sec:local systems}. If $\gamma \in \Gamma_{\beta,\delta}$ (from \eqref{eq:gamma beta delta}), then $\delta^{-1}\gamma\delta \in L_\beta \subset K$, so $\Gamma_{\beta,\delta}$ acts on $M$ via $\gamma \bullet_{\Gamma_{\beta,\delta}}m \defeq (\delta^{-1}\gamma\delta)_f\bullet m$. Hence we may take coinvariants for this action.

The evaluation maps for $\sM$ are inspired by those in Definition \ref{def:arch evaluation map} for $\cV_\lambda^\vee$. A crucial difference is that $\iota_\beta^*\sM$ is \emph{not} $\iota^*\sM$ any more, but there is a map
\[
\tau_\beta^\bullet : \iota_\beta^*\sM \to \iota^*\sM, \qquad (h,m) \mapsto (h,u^{-1}t_p^\beta\bullet m).
\]

\begin{definition}\label{def:evaluation map}
	The \emph{evaluation map for $(M,\bullet)$ of level $p^\beta$ at $\delta$} is the composition
	\begin{align}\label{eq:evaluation definition}
		\sEv_{\beta,\delta}^{M, [\bullet]}   : \htc(S_K,\sM)  \xrightarrow{\ \tau_{\beta}^{\bullet} \circ (\iota_\beta)^*\ } \hc{t}(X_\beta,&\iota^*\sM) \xrightarrow{c_\delta^*} \hc{t}(\Gamma_{\beta,\delta}\backslash \cX_H, c_\delta^*\iota^*\sM)\\
		& \xrightarrow{\coinv_{\beta,\delta}}  \hc{t}(\Gamma_{\beta,\delta}\backslash\cX_H, \Z) \otimes M_{\Gamma_{\beta,\delta}} \labelisorightarrow{ - \cap \theta_\delta} M_{\Gamma_{\beta,\delta}}. \notag
	\end{align}
\end{definition}

We track dependence on $M,\delta,\beta$. The following are proved exactly as in \cite[\S4.3]{BDW20}.

\begin{lemma}(Variation in $M$; \cite[Lem.\ 4.6]{BDW20}) \label{lem:pushforward}
	Let $\kappa : M \rightarrow N$ be a $\Delta_p$-module map. There is a commutative diagram
	\[
	\xymatrix@C=18mm@R=6mm{
		\hc{t}(S_K, \sM) \ar[r]^-{\sEv_{\beta,\delta}^{M, [\bullet]}} \ar[d]^-{\kappa_*} & M_{\Gamma_{\beta,\delta}}\ar[d]^-{\kappa}\\
		\hc{t}(S_K,\sN) \ar[r]^-{\sEv_{\beta,\delta}^{N, [\bullet]}} & N_{\Gamma_{\beta,\delta}}.
	}
	\]
\end{lemma}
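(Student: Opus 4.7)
The plan is to verify that each of the five arrows composing $\sEv_{\beta,\delta}^{M,[\bullet]}$ in \eqref{eq:evaluation definition} is natural with respect to $\Delta_p$-module maps, so that the diagram commutes by pasting together five smaller commuting squares.

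First I would note that a $\Delta_p$-module map $\kappa : M \to N$ restricts to a $K$-equivariant map (via projection $K \to J_p \subset \Delta_p$), hence induces a map $\kappa_* : \sM \to \sN$ of local systems on $S_K$, and therefore a map on compactly supported cohomology which we also denote $\kappa_*$. The pullback $\iota_\beta^*$ of local systems on automorphic cycles commutes with any map of local systems, giving the first commuting square; the same is true for $c_\delta^*$, giving another one. The map $\tau_\beta^\bullet$ is defined by $(h,m) \mapsto (h, u^{-1}t_p^\beta \bullet m)$, and since $\kappa$ is assumed to commute with the $\Delta_p$-action, in particular with $u^{-1}t_p^\beta \in \Delta_p$, the square involving $\tau_\beta^\bullet$ also commutes.

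Next I would check the coinvariants step. By definition, $\Gamma_{\beta,\delta}$ acts on $M$ via $\gamma \bullet_{\Gamma_{\beta,\delta}} m = (\delta^{-1}\gamma\delta)_f \bullet m$, and similarly on $N$. Since $\kappa$ intertwines the $\bullet$-actions, it descends to a well-defined map $\kappa : M_{\Gamma_{\beta,\delta}} \to N_{\Gamma_{\beta,\delta}}$ of coinvariants, and $\mathrm{coinv}_{\beta,\delta}$ is compatible with $\kappa_*$ by functoriality of the trivialisation $\sM|_{\Gamma_{\beta,\delta}\backslash\cX_H} \to \underline{M_{\Gamma_{\beta,\delta}}}$ induced by $m \mapsto [m]$.

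Finally, the cap-product isomorphism $-\cap \theta_\delta : \hc{t}(\Gamma_{\beta,\delta}\backslash\cX_H,\Z) \otimes M_{\Gamma_{\beta,\delta}} \isorightarrow M_{\Gamma_{\beta,\delta}}$ is natural in the coefficient module (it uses only the fundamental class $\theta_\delta$, which is independent of $M$), so this last square also commutes. Combining all five squares yields the diagram in the lemma.

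I do not expect any serious obstacle: the result is essentially a formal naturality statement, and the only real content is the observation that $\kappa$ being a $\Delta_p$-module map is precisely what is needed to make the twist $\tau_\beta^\bullet$ by $u^{-1}t_p^\beta$ compatible with $\kappa_*$. Everything else is standard functoriality of cohomology and coinvariants.
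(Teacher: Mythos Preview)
Your proposal is correct and is exactly the standard functoriality argument one would expect; the paper itself does not spell out a proof but simply refers to \cite[Lem.\ 4.6]{BDW20}, where the same step-by-step naturality check is carried out. The only point worth emphasising is that $u^{-1}t_p^\beta \in \Delta_p$ (since $u \in J_p$ and $t_p^\beta$ lies in the semigroup generated by the $t_{\pri}$), which you implicitly use to make the $\tau_\beta^\bullet$-square commute.
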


\begin{proposition}(Variation in $\delta$; \cite[Prop.\ 4.9]{BDW20}) \label{prop:ind of delta}
	Let $N$ be a left $H(\A)$-module, with action $\bullet$, such that $H(\Q)$ and $H(\R)^\circ$ act trivially. Let $\kappa : M\to N$ be a map of $L_\beta$-modules. Then
	\[
	\sEv_{\beta,[\delta]}^{M,[\bullet],\kappa} \defeq \delta \bullet \left[\kappa \circ \sEv_{\beta,\delta}^{M, [\bullet]}\right] : \hc{t}(S_K,\sM) \longrightarrow N
	\]
	is well-defined and independent of the representative $\delta$ of $[\delta]$.
\end{proposition}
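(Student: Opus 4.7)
The proposition has two assertions: (a) for each $\delta$, the map $m\mapsto\delta\bullet\kappa(m)$ factors through $M_{\Gamma_{\beta,\delta}}$, so that $\delta\bullet[\kappa\circ\sEv_{\beta,\delta}^{M,[\bullet]}]$ makes sense as a map to $N$; (b) this composite depends only on the class $[\delta]\in\pi_0(X_\beta)$. Throughout, we use $H(\A)=H(\A_f)\times H(\R)$, and for $\gamma\in H(\Q)$ write $\gamma_\A=(\gamma,\gamma)$ for the diagonal embedding, $\gamma_f=(\gamma,1)$ for the finite-adelic embedding, and $\gamma_\infty=(1,\gamma)$ for the archimedean embedding, so $\gamma_\A=\gamma_f\cdot\gamma_\infty$.

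For (a), let $\gamma\in\Gamma_{\beta,\delta}$ and set $\ell_\gamma\defeq\delta^{-1}\gamma\delta\in L_\beta$; by construction the coinvariant action is $\gamma\bullet_{\Gamma_{\beta,\delta}}m=\ell_\gamma\bullet m$. Applying $L_\beta$-equivariance of $\kappa$ and associativity of the $H(\A)$-action,
\[
\delta\bullet\kappa(\ell_\gamma\bullet m)=(\delta\ell_\gamma)\bullet\kappa(m)=\gamma_f\cdot(\delta\bullet\kappa(m))=\gamma_\A\cdot\gamma_\infty^{-1}\cdot(\delta\bullet\kappa(m)).
\]
The factor $\gamma_\A\in H(\Q)$ acts trivially by hypothesis. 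Unwinding the defining equation $\gamma=\delta\ell h_\infty\delta^{-1}$ of $\Gamma_{\beta,\delta}$ inside $H(\A)$ and comparing infinite components forces $\gamma_\infty=h_\infty\in H(\R)^\circ$, which also acts trivially. Thus $\delta\bullet\kappa(\ell_\gamma\bullet m)=\delta\bullet\kappa(m)$, establishing (a).

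For (b), any two representatives of $[\delta]\in\pi_0(X_\beta)$ differ by $\delta'=\gamma\delta\ell$ with $\gamma\in H(\Q)$ and $\ell\in L_\beta$; since $\delta,\delta'$ have trivial infinite parts, comparison of infinite components forces $\gamma_\infty\in H(\R)^\circ$. We reduce to two elementary moves. For $\delta'=\delta\ell$, one has $\Gamma_{\beta,\delta'}=\Gamma_{\beta,\delta}$ and $c_{\delta'}=c_\delta$ into $X_\beta[\delta]$, with the coinvariant action shifted by conjugation by $\ell$; a direct check through the definition of $\sEv_{\beta,\delta}^{M,[\bullet]}$, together with the identity $(\delta\ell)\bullet=\delta\bullet\ell\bullet$ and $L_\beta$-equivariance of $\kappa$, yields the required equality. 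For $\delta'=\gamma\delta$, one has $\Gamma_{\beta,\gamma\delta}=\gamma\Gamma_{\beta,\delta}\gamma^{-1}$, and $c_{\gamma\delta}$ identifies with $c_\delta$ via a natural isomorphism $\Gamma_{\beta,\gamma\delta}\backslash\cX_H\cong\Gamma_{\beta,\delta}\backslash\cX_H$ compatible with fundamental classes; tracing the effect through $\tau_\beta^\bullet$, coinvariants, and $\theta_\delta$, the residual twist on $N$ is cancelled by the pre-factor shift $\delta\bullet\to(\gamma\delta)\bullet$ using the argument of (a).

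The principal obstacle is the second move in (b): one must carefully compute how $c_{\gamma\delta}$ transforms under the conjugation $\Gamma_{\beta,\gamma\delta}=\gamma\Gamma_{\beta,\delta}\gamma^{-1}$, verify compatibility with the $\tau_\beta^\bullet$-twist and the fundamental classes $\theta_{\gamma\delta}$ and $\theta_\delta$, and confirm that the resulting correction on $N$ is neutralised by $(\gamma\delta)\bullet=\gamma_f\cdot\delta\bullet$. This is exactly the argument of \cite[Prop.\ 4.9]{BDW20}, whose proof depends only on the splitting $H(\A)=H(\A_f)\times H(\R)$ and transfers without change to arbitrary $F$.
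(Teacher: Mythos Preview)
Your proposal is correct and in fact more detailed than the paper's own treatment, which simply cites \cite[Prop.\ 4.9]{BDW20} and omits all details. One minor notational imprecision: you write $\ell_\gamma\defeq\delta^{-1}\gamma\delta\in L_\beta$, but since $\delta\in H(\A_f)$ and $\gamma\in H(\Q)\subset H(\A)$, the element $\delta^{-1}\gamma\delta$ lies in $L_\beta H(\R)^\circ$, not $L_\beta$; what you need (and what your subsequent computation $\delta\ell_\gamma=\gamma_f\delta$ implicitly uses) is $\ell_\gamma=(\delta^{-1}\gamma\delta)_f$, matching the paper's definition of the $\Gamma_{\beta,\delta}$-action.
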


Fix $\pri|p$, and define $\beta' = (\beta_\mathfrak{q}')_{\mathfrak{q}|p}$, where $\beta_{\pri}' = \beta_{\pri} + 1$ and $\beta_\mathfrak{q}' = \beta_\mathfrak{q}$ for $\mathfrak{q} \neq \pri$. We have natural projections  
\[
\mathrm{pr}_{\beta,\pri} : X_{\beta'} \longrightarrow X_\beta, \ \ \ \mathrm{pr}_{\beta,\pri} : \pi_0(X_{\beta'}) \rightarrow \pi_0(X_\beta).
\]
The action of $t_{\pri} \in \Delta_p$ on $M$ yields an action of $U_{\pri}^\bullet \defeq [Kt_{\pri}K]$ on $\hc{t}(S_K,\sM)$, via \cite[\S2.3.2]{BDW20}.

\begin{proposition}(Variation in $\beta$; \cite[Prop.\ 4.10]{BDW20}) \label{prop:evaluations changing beta} Let $N$ and $\kappa$ be as in Proposition~\ref{prop:ind of delta}. If $\beta > 0$, then as maps $\hc{t}(S_K,\sM) \to N$ we have
	\[
	\sum_{[\eta] \in \mathrm{pr}_{\beta,\pri}^{-1}([\delta])} \sEv_{\beta',[\eta]}^{M,[\bullet],\kappa}   =
	\sEv_{\beta,[\delta]}^{M,[\bullet],\kappa} \circ U_{\pri}^\bullet.
	\]
\end{proposition}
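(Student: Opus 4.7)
The plan is to mirror the proof of \cite[Prop.~4.10]{BDW20}; the passage from totally real to general number fields does not affect the local computation at $\pri$, and the only new ingredient is global bookkeeping over the components $\pi_0(X_\beta)$, which is controlled by the triviality of the $H(\Q)$-action on $N$ hypothesised in Proposition~\ref{prop:ind of delta}.

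First I would give an explicit coset decomposition of the Hecke operator. Since $J_{\pri}$ is the $Q$-parahoric and $t_{\pri} = \iota(\varpi_{\pri} I_n, I_n)$, a standard Iwahori factorisation yields
\[
J_{\pri}\, t_{\pri}\, J_{\pri} \;=\; \bigsqcup_{u \in \mathcal{S}_{\pri}} u\, t_{\pri}\, J_{\pri},
\]
where $\mathcal{S}_{\pri}$ is a system of representatives for $N_Q(\cO_{\pri})/t_{\pri} N_Q(\cO_{\pri}) t_{\pri}^{-1}$. Via the $\bullet$-action this expresses $U_{\pri}^\bullet = \sum_{u \in \mathcal{S}_{\pri}} [u\, t_{\pri}]^\bullet$ as an operator on $\hc{t}(S_K, \sM)$.

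Next I would identify the fibres of $\mathrm{pr}_{\beta,\pri}$. Using the definition of $L_{\pri}^{\beta_{\pri}'}$ and the identity $u^{-1} t_p^{\beta'} = (u^{-1} t_p^\beta)\cdot t_{\pri}$, a direct matrix computation produces a canonical bijection
\[
\mathcal{S}_{\pri} \;\longleftrightarrow\; \mathrm{pr}_{\beta,\pri}^{-1}([\delta]), \qquad u \longmapsto [\delta \cdot h_u],
\]
for explicit elements $h_u \in H(F_{\pri})$, matching the indexing sets of the two sides of the claimed equality. Then I would unwind Definition~\ref{def:evaluation map} and verify the equality summand by summand. The identity $u^{-1} t_p^{\beta'} = (u^{-1} t_p^\beta)\cdot t_{\pri}$ shows that $\iota_{\beta'}$ factors through $\iota_\beta$ by a right translation by $t_{\pri}$, so $\tau_{\beta'}^\bullet$ equals the $t_{\pri}$-twist composed with $\tau_\beta^\bullet$; combined with the bijection above, each Hecke coset on the right yields exactly one fibre summand on the left, and after descending to $\Gamma_{\beta,\delta}$-coinvariants and applying $\kappa$ (using the trivial $H(\Q)$-action on $N$) one obtains the required equality in $N$.

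The main obstacle is purely combinatorial: verifying that the bijection between Hecke coset representatives and fibre elements is compatible with the $\bullet$-action, the passage to coinvariants, and the cap product with the fundamental classes $\theta_\delta$ and $\theta_\eta$. This requires a careful diagram chase of the compositions in Definition~\ref{def:evaluation map}, together with a check that the choices of $\theta_\delta$ for varying $\delta$ are coherent with the projection $\mathrm{pr}_{\beta,\pri}$ in the sense of \cite[\S4.2.3]{BDW20}. None of these steps introduces conceptually new difficulties beyond those already handled in \cite[Prop.~4.10]{BDW20}, so I would cite \emph{op.\ cit.}\ for the detailed verification and emphasise only the component-bookkeeping that is specific to the general number field setting.
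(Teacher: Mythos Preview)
Your overall strategy---reduce to \cite[Prop.~4.10]{BDW20} after noting that nothing in the argument is sensitive to the archimedean signature of $F$---is exactly what the paper does (indeed the paper gives no proof at all beyond the citation). So at the level of ``what is the proof'', you are aligned with the paper.

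However, your sketch of the underlying argument contains a genuine error. You claim a bijection
\[
\mathcal{S}_{\pri} \;\longleftrightarrow\; \mathrm{pr}_{\beta,\pri}^{-1}([\delta]),
\]
but this fails by a cardinality count whenever $n\geq 2$. The left-hand side has $\#\mathcal{S}_{\pri}=\#\big(N_Q(\cO_{\pri})/t_{\pri}N_Q(\cO_{\pri})t_{\pri}^{-1}\big)=\# M_n(\F_{\pri})=q_{\pri}^{n^2}$. On the other hand, by \eqref{eq:component group} and Lemma~\ref{lem:1 mod p} the fibre of $\pi_0(X_{\beta'})\to\pi_0(X_\beta)$ is a quotient of $(1+\pri^{\beta_{\pri}}\cO_{\pri})/(1+\pri^{\beta_{\pri}+1}\cO_{\pri})\cong\F_{\pri}$, so has at most $q_{\pri}$ elements. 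Hence for $n\geq 2$ there is no bijection, and your claim that ``each Hecke coset on the right yields exactly one fibre summand on the left'' cannot be correct as stated.

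The missing ingredient is the degree of the covering $X_{\beta'}[\eta]\to X_\beta[\delta]$ on each connected component. The total degree of $\mathrm{pr}_{\beta,\pri}:X_{\beta'}\to X_\beta$ over $X_\beta[\delta]$ is $q_{\pri}^{n^2}$ (cf.\ the volume computation $\mathrm{vol}(L_\beta)/\mathrm{vol}(L_{\beta'})=q_{\pri}^{n^2}$ used in Theorem~\ref{thm:critical value}); this factors as the number of components in the fibre times the degree on each component. The correct argument, as in \cite[Prop.~4.10]{BDW20}, identifies the Hecke correspondence with the covering $\mathrm{pr}_{\beta,\pri}$ together with the extra twist by $t_{\pri}$ coming from $\iota_{\beta'}=\iota_\beta\cdot t_{\pri}$, and then uses the compatibility of fundamental classes $\theta_\eta\mapsto(\text{degree})\cdot\theta_\delta$ under pushforward along the covering. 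It is this pushforward of fundamental classes---not a bijection of index sets---that absorbs the discrepancy $q_{\pri}^{n^2-1}$ between the number of Hecke cosets and the number of fibre components. If you revise your sketch to make the map $\mathcal{S}_{\pri}\to\mathrm{pr}_{\beta,\pri}^{-1}([\delta])$ a surjection whose fibres correspond to sheets of the covering on each component, and invoke the fundamental-class compatibility from \cite[\S4.2.3]{BDW20}, then the argument goes through.
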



\subsection{Classical evaluation maps}\label{sec:classical evaluations}

We now use the above to construct linear functionals
\[
	\sEv_{\chi}^{[\bullet]} : \hc{t}(S_K,\sV_\lambda^\vee(L)) \longrightarrow L(\chi),
\]
analogous to $\cEv_\chi$ with $\cV_\lambda^\vee$. We have
\[
\sE_{\beta,\delta}^{V_\lambda^\vee,[\bullet]} : \hc{t}(S_K,\sV_\lambda^\vee(L)) \longrightarrow (V_\lambda^\vee(L))_{\Gamma_{\beta,\delta}}.
\]
Lemma \ref{lem:coinvariants} says $\kappa_{\bj}$ factors through the $\Gamma_{\beta,\delta}$-coinvariants. That lemma used the $\cdot$-action on $V_\lambda^\vee$, but by the same proof it is also true of the $*$-action.

To apply Proposition \ref{prop:ind of delta} to $\kappa_{\bj}$, we now extend $\VH(L)$ to an $H(\A)$-module in such a way that $H(\Q)$ and $H(\R)^\circ$ act trivially (as required by the statement of that proposition).

\begin{definition}
	Let $V_{\chi,[p]}^H(L)$ be the 1-dimensional $H(\A)$-module $\chi_{[p]}\otimes\chi_{[p]}^{-1}\eta_{[p]}^{-1}$, recalling  $\chi_{[p]}$ and $\eta_{[p]}$ are the ray class characters attached to $\chi$ and $\eta$ in \S\ref{sec:p-adic Hecke}. Precisely, it is the space $L(\chi)$, with $h = (h_1,h_2) \in H(\A)$ acting as
	\[
	h\cdot v = \Big[\chi_{[p]}\left(\mathrm{det}\left(\tfrac{h_1}{h_2}\right)\right) \cdot \eta_{[p]}\left(\mathrm{det}\left(\tfrac{1}{h_2}\right)\right)\Big] v.
	\] 

\end{definition}

By construction, $H(\Q)$ and $H(\R)^\circ$ act trivially on $V_{\chi,[p]}^H$.

\begin{lemma}\label{lem:Vjw to Vvarphi}
	The identity map is an isomorphism of 1-dimensional $L_\beta$-modules
	\[
		\VH(L) \isorightarrow V_{\chi,[p]}^H(L) \cong L(\chi).
	\]
	Here $L_\beta$ acts on $\VH$ via projection to $H(\Qp)$, and on $V_{\chi,[p]}^H$ by restricting the $H(\A)$-action.
\end{lemma}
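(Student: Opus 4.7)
The plan is to verify that the two $L_\beta$-actions on the underlying 1-dimensional $L$-vector space coincide; since the identity is manifestly a vector space isomorphism, this suffices. For $h = (h_1, h_2) \in L_\beta$, the action on $V_{\bj,-\usw-\bj}^H(L)$ factors through the projection to $H(\Qp)$ and, by \eqref{eq:wjw}, is multiplication by $w_p^{\bj}(\det h_{1,p}) \cdot w_p^{-\usw-\bj}(\det h_{2,p})$. The action on $V_{\chi,[p]}^H(L)$ is by definition multiplication by $\chi_{[p]}(\det(h_1 h_2^{-1})) \cdot \eta_{[p]}(\det(h_2)^{-1})$. So the proof reduces to an equality of two explicit $L^\times$-valued expressions.

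To analyse the second expression, I would view $h \in L_\beta \subset H(\A_f)$ as an adele with trivial infinite component. Then the formula for $\chi_{[p]}$ from \S\ref{sec:p-adic Hecke} simplifies: on a finite idele $x$, $\chi_{[p]}(x) = \chi_f(x) \cdot w_p^{\bj}(x_p)$, and similarly $\eta_{[p]}(x) = \eta_f(x) \cdot w_p^{\usw}(x_p)$, using that the infinity type of $\eta$ is $\usw$ (forced by $\omega_\pi = \eta^n$ and the purity weight of $\lambda$). The key step is then to kill the finite parts $\chi_f(\det(h_1 h_2^{-1}))$ and $\eta_f(\det(h_2)^{-1})$. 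By Lemma \ref{lem:1 mod p}, these finite ideles lie in $1 + p^\beta \m \widehat{\cO}_F$ and $1 + \m \widehat{\cO}_F$ respectively. Since $\chi$ has conductor dividing $p^\beta$ and $\eta$ has conductor dividing $\m$, both $\chi_f$ and $\eta_f$ are trivial on these subgroups (being unramified away from $p$ and $\m$, and having the stated level at the relevant primes). Thus $\chi_{[p]}(\det(h_1 h_2^{-1})) = w_p^{\bj}(\det(h_{1,p}/h_{2,p}))$ and $\eta_{[p]}(\det(h_2)^{-1}) = w_p^{-\usw}(\det h_{2,p})$.

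Multiplying, the $V_{\chi,[p]}^H$-action becomes $w_p^{\bj}(\det h_{1,p}) \cdot w_p^{-\bj}(\det h_{2,p}) \cdot w_p^{-\usw}(\det h_{2,p}) = w_p^{\bj}(\det h_{1,p}) \cdot w_p^{-\usw-\bj}(\det h_{2,p})$, matching the $V_{\bj,-\usw-\bj}^H$-action. The main obstacle is bookkeeping conventions: tracking sign exponents in the definition of $\chi_{[p]}$, confirming that the infinity type of $\eta$ is exactly $\usw$, and verifying that the large-enough ideal $\m$ was indeed chosen so that $\eta$ is trivial on $1 + \m \widehat{\cO}_F$; the core content, however, is just the 1-mod-conductor statement of Lemma \ref{lem:1 mod p}, exactly as in the proof of Lemma \ref{lem:coinvariants}.
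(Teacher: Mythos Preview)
Your proposal is correct and follows essentially the same approach as the paper: write out both actions explicitly, expand $\chi_{[p]}$ and $\eta_{[p]}$ on finite ideles, and use Lemma~\ref{lem:1 mod p} together with the conductor bounds on $\chi$ and $\eta$ to kill the finite-order pieces $\chi_f(\det(h_1h_2^{-1}))$ and $\eta_f(\det(h_2)^{-1})$, leaving only the matching $w_p$-terms. The paper's proof is slightly terser (it writes the extra factor directly as $\chi(\ell_1\ell_2^{-1})\eta(\ell_2^{-1})$ and shows it equals $1$), but the content is identical.
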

\begin{proof}
Via \eqref{eq:wjw},  $\ell = (\ell_1,\ell_2) \in L_\beta$ acts on $\VH(L)$ as
\begin{equation}\label{eq:L_beta 1}
\ell \cdot v = (w_p^{\bj}\otimes w_p^{-\usw-\bj})(\ell_p) \   v.
\end{equation}
 By definition of $\chi_{[p]}$ and $\eta_{[p]}$, the action on $V_{\chi,[p]}^{H}$ is by
\begin{equation}\label{eq:L_beta 2}
	\ell \cdot v = \chi(\ell_1\ell_2^{-1}) \eta(\ell_2^{-1})\times  (w_p^{\bj}\otimes w_p^{-\usw-\bj})(\ell_p)  \ v.
\end{equation}
Now $\det(\ell_1\ell_2^{-1}) \equiv 1\newmod{p^\beta\widehat{\cO}_F}$ by Lemma \ref{lem:1 mod p}, so as $\chi$ has conductor dividing $p^\beta$, we see $\chi(\ell_1\ell_2^{-1}) \defeq \chi(\det(\ell_1\ell_2^{-1})) = 1$. Similarly $\det(\ell_2) \equiv 1 \newmod{\m}$, and the conductor of $\eta$ divides $\m$, we have $\eta(\ell_2^{-1}) = 1$. In particular, \eqref{eq:L_beta 1} and \eqref{eq:L_beta 2} agree, proving the lemma.
\end{proof}

\begin{corollary}
	For any $\delta \in H(\A_f)$, the map
\[
\sEv_{\chi, [\delta]}^{[\bullet]} : \hc{t}(S_K,\sV_\lambda^\vee(L)) \xrightarrow{\ \sEv_{\beta,\delta}^{V_\lambda^\vee,[\bullet]} \ } \big(V_{\lambda}^\vee(L)\big)_{\Gamma_{\beta,\delta}} \xrightarrow{\kappa_{\bj}} \VH(L) \isorightarrow V_{\chi,[p]}^H \xrightarrow{\ \cdot \delta \ } V_{\chi,[p]}^H \cong L(\chi)
\]
is well-defined and depends only on the class $[\delta] \in \pi_0(X_\beta)$.
\end{corollary}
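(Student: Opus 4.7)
The plan is to deduce this corollary as an immediate consequence of Proposition \ref{prop:ind of delta}, taking $M = V_\lambda^\vee(L)$ with action $[\bullet]$, $N = V_{\chi,[p]}^H(L)$ with its $H(\A)$-action, and $\kappa$ the composition
\[
V_\lambda^\vee(L) \xrightarrow{\ \kappa_{\bj}\ } \VH(L) \isorightarrow V_{\chi,[p]}^H(L)
\]
of the branching map with the identification from Lemma \ref{lem:Vjw to Vvarphi}.

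To invoke Proposition \ref{prop:ind of delta}, three hypotheses need to be checked. First, the composition $\kappa$ must factor through the $\Gamma_{\beta,\delta}$-coinvariants, so that $\kappa \circ \sEv_{\beta,\delta}^{V_\lambda^\vee,[\bullet]}$ makes sense on the image of the evaluation map: this is exactly Lemma \ref{lem:coinvariants}, whose proof goes through identically for either the $\cdot$- or the $*$-action on $V_\lambda^\vee$. Second, $\kappa$ must be a map of $L_\beta$-modules: this is exactly Lemma \ref{lem:Vjw to Vvarphi}, which was designed precisely for this purpose, and which reconciles the $L_\beta$-action inherited via projection to $H(\Qp)$ on $\VH$ with the $L_\beta$-action on $V_{\chi,[p]}^H$ restricted from $H(\A)$. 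Third, $H(\Q)$ and $H(\R)^\circ$ must act trivially on $V_{\chi,[p]}^H(L)$: this is built into the definition, since $\chi_{[p]}$ and $\eta_{[p]}$ are constructed by factoring out the archimedean components of $\chi$ and $\eta$, and descend to characters of $\Galp$ on which $H(\Q)$ and $H(\R)^\circ$ act trivially by class field theory.

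With these three inputs in place, Proposition \ref{prop:ind of delta} applied to $\kappa$ yields precisely the statement: the map $\delta \cdot [\kappa \circ \sEv_{\beta,\delta}^{V_\lambda^\vee,[\bullet]}]$, which matches $\sEv_{\chi,[\delta]}^{[\bullet]}$ as defined, is well-defined and independent of the choice of representative $\delta$ of its class $[\delta] \in \pi_0(X_\beta)$. There is no substantive obstacle here, since all genuine content has already been absorbed into the preparatory Lemmas \ref{lem:coinvariants} and \ref{lem:Vjw to Vvarphi} and into Proposition \ref{prop:ind of delta}; the corollary is essentially a bookkeeping statement packaging those inputs, and should follow in a few lines.
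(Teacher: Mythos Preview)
Your proposal is correct and matches the paper's approach exactly: the paper's proof is the single line ``Immediate by combining Proposition \ref{prop:ind of delta} and Lemma \ref{lem:Vjw to Vvarphi},'' and you have simply unpacked the hypotheses of Proposition \ref{prop:ind of delta} (triviality of the $H(\Q)$- and $H(\R)^\circ$-actions, and $L_\beta$-equivariance of $\kappa$) that the paper established in the immediately preceding remarks and lemma.
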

\begin{proof}
	Immediate by combining Proposition \ref{prop:ind of delta} and Lemma \ref{lem:Vjw to Vvarphi}.
\end{proof}

\begin{definition}
	The \emph{classical evaluation map attached to $\chi$ and the action $\bullet$} is the map
		\[
			\sEv_{\chi}^{[\bullet]} \defeq \sum_{\delta \in \pi_0(X_\beta)} \sEv_{\chi,[\delta]}^{[\bullet]} : \hc{t}(S_K,\sV_\lambda^\vee(L)) \longrightarrow L.
		\]
	It depends on the choices of $\kappa_{\bj}$ and a basis $u_{\bj}$ of the line $V_{\chi,[p]}^H \cong L(\chi)$, each unique up to scalar.
\end{definition}

As mentioned previously, there are two natural choices of action $\bullet$ on $V_\lambda^\vee(L)$: the standard $\cdot$-action (which connects more cleanly to the theory from \S\ref{sec:critical L-values}), and the $*$-action inherited as a quotient of $\cD_\lambda$. We get two evaluation maps $\sEv_{\chi}^{[\cdot]}$ and $\sEv_{\chi}^{[*]}$. Since the only dependence on $\bullet$ is in the map $\tau_\beta^\bullet$, from \eqref{eq:dot vs *} we have (recalling $\chi$ has conductor related to $p^\beta$)
\begin{equation}\label{eq:ev dot vs *}
	\sEv_{\chi}^{[*]}  = \lambda(t_p^\beta) \times \sEv_\chi^{[\cdot]}.
\end{equation}

\subsection{Comparison between $\cEv_\chi$ and $\sEv_\chi^{[\cdot]}$}
We now connect the evaluation with $\bullet = \cdot$ to Definition \ref{def:ev_chi} and hence to \S\ref{sec:critical L-values}. Let $\phi \in \hc{t}(S_K,\cV_\lambda^{\vee}(L))$, and let $\upsilon$ denote the natural isomorphism
\[
\upsilon : \hc{t}(S_K,\cV_\lambda^\vee(L)) \isorightarrow \hc{t}(S_K,\sV_\lambda^\vee(L))
\]
from \S\ref{sec:local systems}. Then, taking $\cdot$-actions everywhere:

\begin{proposition}
We have $\cEv_{\chi}(\phi) = \sEv_{\chi}^{[\cdot]}(\upsilon(\phi)).$
\end{proposition}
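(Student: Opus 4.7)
The proof is essentially an unwinding of definitions, comparing two local-system descriptions of the same cohomology classes; the main task is careful bookkeeping of the various actions on $V_\lambda^\vee$ and on coinvariants. I would argue as follows.

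\emph{Step 1: a local-system identity.} The plan is first to show that, after pulling back to $X_\beta$, the arch\-imedean pullback $\iota_\beta^*\phi \in \hc{t}(X_\beta, \iota^*\cV_\lambda^\vee(L))$ and the $p$-adic pullback $\tau_\beta^{\cdot}\circ \iota_\beta^*(\upsilon(\phi)) \in \hc{t}(X_\beta, \iota^*\sV_\lambda^\vee(L))$ correspond under the `pullback of $\upsilon$', namely the natural isomorphism $\iota^*\upsilon \colon \iota^*\cV_\lambda^\vee \isorightarrow \iota^*\sV_\lambda^\vee$. Concretely, at a representative $h \in H(\A)$, $\upsilon$ sends $(hu^{-1}t_p^\beta, m) \in \cV_\lambda^\vee$ to $(hu^{-1}t_p^\beta, (hu^{-1}t_p^\beta)_p^{-1}\cdot m) = (hu^{-1}t_p^\beta, t_p^{-\beta}u h_p^{-1}\cdot m) \in \sV_\lambda^\vee$; applying $\tau_\beta^{\cdot}$ to translate this into the fiber of $\iota^*\sV_\lambda^\vee$ over $[h]$ yields $(h, u^{-1}t_p^\beta\cdot(t_p^{-\beta}u h_p^{-1}\cdot m)) = (h, h_p^{-1}\cdot m)$, which is exactly $\iota^*\upsilon$. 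This is the main computation and its key ingredient is the cancellation of $u^{-1}t_p^\beta$ against its inverse.

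\emph{Step 2: descent to coinvariants.} Next I would restrict by $c_\delta^*$ to the component $X_\beta[\delta] = \Gamma_{\beta,\delta}\backslash\cX_H$. At $[h_\infty]_\delta$, the point $\delta h_\infty \in H(\A)$ has trivial archimedean $p$-part, so $\iota^*\upsilon$ reduces to $(h_\infty,m)\mapsto (h_\infty, \delta_p^{-1}\cdot m)$. Trivialising the local systems to reach coinvariants (arch\-imedean action $\gamma\cdot m$ vs.\ $p$-adic action $(\delta^{-1}\gamma\delta)_p\cdot m$, as in \S\ref{sec:abstract evaluations}), one checks that $m\mapsto \delta_p^{-1}\cdot m$ descends to a well-defined map between the two coinvariant spaces (a direct verification, since $\delta_p^{-1}\gamma_p \delta_p\cdot (\delta_p^{-1}m) = \delta_p^{-1}\gamma m$). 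Capping with the fundamental class $\theta_\delta$ commutes with this, so under the identification of Step~1 we have $\sEv_{\beta,\delta}^{V_\lambda^\vee,[\cdot]}(\upsilon(\phi)) = \delta_p^{-1}\cdot \cEv_{\beta,\delta}(\phi)$ in the appropriate coinvariants.

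\emph{Step 3: passing through $\kappa_{\bj}$ and the final twists.} Applying $\kappa_{\bj}$ and using its $H(\Qp)$-equivariance, the $p$-adic $\delta$-summand becomes
\[
	\kappa_{\bj}\bigl(\delta_p^{-1}\cdot \cEv_{\beta,\delta}(\phi)\bigr) = (w_p^{\bj}\otimes w_p^{-\usw-\bj})(\delta_p^{-1})\cdot \kappa_{\bj}\bigl(\cEv_{\beta,\delta}(\phi)\bigr),
\]
which then gets multiplied by the scalar $(\chi_{[p]}\otimes\chi_{[p]}^{-1}\eta_{[p]}^{-1})(\delta)$ coming from the $H(\A)$-action on $V^H_{\chi,[p]}$. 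Meanwhile the $\delta$-summand of $\cEv_\chi(\phi)$ is just $(\chi\otimes\chi^{-1}\eta^{-1})(\delta)\cdot \kappa_{\bj}(\cEv_{\beta,\delta}(\phi))$. Since $\delta \in H(\A_f)$, the very definition of $\chi_{[p]}$ in \S\ref{sec:p-adic Hecke} gives
\[
	(\chi_{[p]}\otimes\chi_{[p]}^{-1}\eta_{[p]}^{-1})(\delta) = i_p\bigl((\chi\otimes\chi^{-1}\eta^{-1})(\delta)\bigr)\cdot (w_p^{\bj}\otimes w_p^{-\usw-\bj})(\delta_p),
\]
which cancels the factor $(w_p^{\bj}\otimes w_p^{-\usw-\bj})(\delta_p^{-1})$ produced above and leaves exactly the arch\-imedean summand. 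Summing over $\delta \in \pi_0(X_\beta)$ gives the equality.

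\emph{Main obstacle.} There is no real difficulty beyond the identity of Step~1 and the matching of scalars in Step~3; the former is essentially forced by the definitions of $\upsilon$ and $\tau_\beta^{\cdot}$, the latter is exactly the reason $\chi_{[p]}$ was defined by the formula in \S\ref{sec:p-adic Hecke}. The tricky part to present cleanly is simply keeping track of which of the various actions ($\cdot$, $*$, algebraic $w_p^{\bj}$, the $H(\A)$-action on $V^H_{\chi,[p]}$, and the $\Gamma_{\beta,\delta}$-coinvariants on either side) is in force at each stage.
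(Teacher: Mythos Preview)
Your proposal is correct and follows essentially the same approach as the paper. The paper packages your Steps~1--2 as a single commutative diagram (with horizontal maps $(g,v)\mapsto(g,g_p^{-1}\cdot v)$, $(h,v)\mapsto(h,h_p^{-1}\cdot v)$, etc.), arriving at exactly your identity $\sEv_{\beta,\delta}^{V_\lambda^\vee,[\cdot]}(\upsilon(\phi)) = \delta_p^{-1}\cdot \cEv_{\beta,\delta}(\phi)$, and then performs the same cancellation of $(w_p^{\bj}\otimes w_p^{-\usw-\bj})(\delta_p)$ against the $p$-part of $\chi_{[p]}\otimes\chi_{[p]}^{-1}\eta_{[p]}^{-1}$ that you carry out in Step~3.
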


\begin{proof}
We have a commutative diagram (cf.\ \cite[Prop.\ 4.6]{DJR18} and \cite[Prop.\ 4.1]{BDJ17})
\[
\xymatrix@R=3mm@C=35mm{
\phi \sar{d}{\in}\ar@{|->}[r] & \upsilon(\phi)\sar{d}{\in}\\
\hc{t}(S_K,\cV_\lambda^\vee(L))\ar[dd]^{\iota_\beta^*} 
\ar[r]^{(g,v) \longmapsto (g,g_p^{-1}\cdot v)} & \hc{t}(S_K,\sV_\lambda^\vee(L))\ar[dd]^{\tau_\beta^\cdot \circ \iota_\beta^*}
\\
&\\
\hc{t}(X_\beta,\iota^*\cV_\lambda^\vee(L))\ar[r]^{(h,v) \longmapsto (h,h_p^{-1}\cdot v)} \ar[dd]^{c_\delta^*}
& \hc{t}(X_\beta,\iota^*\sV_\lambda^\vee(L))\ar[dd]^{c_\delta^*}
\\
&\\
\hc{t}(\Gamma_{\beta,\delta}\backslash\cX_H, c_\delta^*\iota^*\cV_\lambda^\vee(L)) \ar[r]^{([h_\infty],v) \mapsto ([h_\infty], \delta_p^{-1}\cdot v)}\ar[dd]^{(-\cap\theta_{\delta}) \circ \mathrm{coinv}_{\beta,\delta} } & \hc{t}(\Gamma_{\beta,\delta}\backslash\cX_H, c_\delta^*\iota^*\sV_\lambda^\vee(L))\ar[dd]^{(-\cap\theta_{\delta}) \circ \mathrm{coinv}_{\beta,\delta} }\\
&\\
(V_\lambda^\vee(L))_{\Gamma_{\beta,\delta}} \ar[r]^{v \longmapsto \delta_p^{-1}\cdot v} & (V_\lambda^\vee(L))_{\Gamma_{\beta,\delta}},
}
\]
where the horizontal maps are induced by the stated maps of local systems. In particular,
\[
\delta_p^{-1} \cdot \cEv_{\beta,\delta}(\phi) = \sEv_{\beta,\delta}^{[\cdot]}(\upsilon(\phi))
\]
as elements of $(V_\lambda^\vee(L))_{\Gamma_{\beta,\delta}}$. Composing with the $H(\Qp)$-module map $\kappa_{\bj}$ gives
\begin{equation}\label{eq:local system prop}
(w_p^{-\bj}\otimes w_p^{\usw+\bj})(\delta_p)\Big[\kappa_{\bj}\circ \cEv_{\beta,\delta}(\phi)\Big] = \Big[\kappa_{\bj}\circ \sEv_{\beta,\delta}^{[\cdot]}(\upsilon(\phi))\Big].
\end{equation}
Now consider both sides as elements of $V_{\chi,[p]}^H$ by Lemma \ref{lem:Vjw to Vvarphi}, and act by $\delta$ on both sides (via the $H(\A)$ action on $V_{\chi,[p]}^H$). On the left-hand side, the factor $(w_p^{\bj}\otimes w_p^{-\usw-\bj})(\delta_p)$ in $\chi_{[p]}\otimes\chi_{[p]}^{-1}\eta_{[p]}^{-1}$ cancels with the left-most term in \eqref{eq:local system prop}, so  we have
\[
\delta\cdot\left((w_p^{-\bj}\otimes w_p^{\usw+\bj})(\delta_p) \Big[\kappa_{\bj}\circ \cEv_{\beta,\delta}(\phi)\Big]\right) =(\chi\otimes\chi^{-1}\eta^{-1})(\delta) \cdot \Big[ \kappa_{\bj} \circ \cEv_{\beta,\delta}(\phi)\Big].
\]
On the right-hand side we get, by definition, $\sEv_{\chi,[\delta]}^{[\cdot]}(\upsilon(\phi)).$ Summing both sides over $\delta \in \pi_0(X_\beta)$ completes the proof.
\end{proof}

The following theorem, now using the $*$-action, summarises the last three sections. As in the last line of the proof of \cite[Thm.\ 4.7]{DJR18}), the \emph{global Gauss sum} is
\begin{equation}\label{eq:tau chi f}
	\tau(\chi_f) \defeq \prod_{\chi_v \text{ unramified}}\chi_v(\varpi_v)^{-\delta_v} \cdot \prod_{\chi_v \text{ ramified}} \tau(\chi_v);
\end{equation}
this is different from some treatments, as our $\psi$ has conductor $\fd^{-1}$, rather than $\widehat{\cO}_F$.

\begin{theorem}\label{thm:critical value}
Let $\pi$ and $\varphi_f^{\mathrm{FJ},\alpha}$ be as in Test Data \ref{test-data}.  For all $\chi \in \mathrm{Crit}(\pi)$, we have
	\begin{align}
		i_p^{-1}\circ \sEv_{\chi}^{[*]}\circ \Theta_{[\omega]}^{K,i_p}(\varphi_f^{\mathrm{FJ},\alpha}) =& A \cdot \chi_{[p]}(\det w_n) \cdot \lambda(t_p^\beta)  \\
		&\times  \tau(\chi_f)^n\cdot  \prod_{\pri|p} Q'(\pi_{\pri},\chi_{\pri}) \cdot \frac{L^{(p)}\big(\pi\times\chi,\tfrac12\big)}{\Omega_{\pi,\chi_\infty}},\notag
	\end{align}
where 
	\[
	Q'(\pi_{\pri},\chi_{\pri})=\left\{\begin{array}{cl}  q_{\pri}^{\beta_{\pri}\left(\tfrac{n^2-n}{2}\right)}  &: \chi_{\pri} \text{ ramified},\\
		 \displaystyle  \alpha_{\pri} \cdot   \prod_{i=n+1}^{2n}
			\frac{1-\UPS_{\pri,i}^{-1}\chi_{\pri}^{-1}(\varpi_{\pri})q_{\pri}^{-1/2}}{1-\UPS_{\pri,i}\chi_{\pri}(\varpi_{\pri})q_{\pri}^{-1/2}} &: \chi_{\pri} \text{ unramified},\end{array}\right.
	\]
	and 
\begin{equation}\label{eq:A}
A \defeq \frac{1}{\mathrm{vol}(L_1)} \cdot \prod_{\pri|p} \left(\frac{q_{\pri}^n}{(q_{\pri}-1)^n} \cdot q_{\pri}^{\delta_{\pri}(n^2-n)/2}\right)
\end{equation}
is a constant independent of $\chi$. Here $L_1 \defeq L_{(1,...,1)}$ (i.e.\ we take $\beta_{\pri} = 1$ for all $\pri$), and we consider $\det(w_n)$ as an element of $(\cO_F\otimes \Zp)^\times$.
\end{theorem}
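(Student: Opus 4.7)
The plan is to pass through a chain of identifications, reduce to the JST evaluation of Section \ref{sec:critical L-values}, apply Corollary \ref{cor:critical L-value} to express the result in terms of local Friedberg--Jacquet integrals, and then evaluate those integrals place by place.

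First, equation \eqref{eq:ev dot vs *} gives $\sEv_\chi^{[*]} = \lambda(t_p^\beta)\cdot \sEv_\chi^{[\cdot]}$, which extracts the explicit factor $\lambda(t_p^\beta)$ appearing in the theorem. The Proposition preceding the theorem then identifies $\sEv_\chi^{[\cdot]}\circ \upsilon$ with $\cEv_\chi$, moving the computation from the $K$-local system $\sV_\lambda^\vee$ to the $G(\Q)$-local system $\cV_\lambda^\vee$. Lemma \ref{lem:Ev 1 vs Ev 2} next transfers $\cEv_\chi\circ \Theta_{[\omega]}^{K,i_p}(\varphi_f^{\mathrm{FJ},\alpha})$ into $i_p\big(\cEv_\chi^{\mathrm{JST}}\circ \Theta_{[\omega]}^{K_\beta}(u^{-1}t_p^\beta\cdot \varphi_f^{\mathrm{FJ},\alpha})\big)$; since $u^{-1}t_p^\beta$ is supported at $p$, the twisted vector remains factorisable, and at each $\pri|p$ its local component is precisely $u_{\pri}^{-1}t_{\pri}^{\beta_{\pri}}\cdot \varphi_{\pri}^{\alpha_{\pri}}$, which is exactly the input considered in Proposition \ref{prop:zeta p}.

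Corollary \ref{cor:critical L-value} now yields
\[
\cEv_\chi^{\mathrm{JST}}\circ \Theta_{[\omega]}^{K_\beta}(u^{-1}t_p^\beta\cdot \varphi_f^{\mathrm{FJ},\alpha}) = \mathrm{vol}(L_\beta)^{-1}\cdot \prod_{v\nmid p\infty}\chi_v(\varpi_v)^{-n\delta_v}\cdot \prod_{\pri|p}Z_{\pri}\big(u_{\pri}^{-1}t_{\pri}^{\beta_{\pri}}\cdot \varphi_{\pri}^{\alpha_{\pri}},\chi_{\pri},\tfrac12\big)\cdot \frac{L^{(p)}(\pi\times\chi,\tfrac12)}{\Omega_{\pi,\chi_\infty}},
\]
and Proposition \ref{prop:zeta p} evaluates each local factor at $\pri|p$ as $\frac{q_{\pri}^n}{(q_{\pri}-1)^n}\cdot q_{\pri}^{\delta_{\pri}(n^2-n)/2}\cdot \alpha_{\pri}^{-\delta_{\pri}}\cdot Q(\pi_{\pri},\chi_{\pri})$. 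The $\pri$-independent prefactors are exactly the ones packaged (together with $\mathrm{vol}(L_1)^{-1}$) into the constant $A$ of \eqref{eq:A}.

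It remains to rearrange $Q(\pi_{\pri},\chi_{\pri})$ and the auxiliary factor $\prod_{v\nmid p\infty}\chi_v(\varpi_v)^{-n\delta_v}$ into the desired $\tau(\chi_f)^n\cdot \prod_{\pri|p}Q'(\pi_{\pri},\chi_{\pri})$. In the unramified case at $\pri|p$, $Q(\pi_{\pri},\chi_{\pri})$ factors tautologically as $\chi_{\pri}(\varpi_{\pri})^{-n\delta_{\pri}}\cdot Q'(\pi_{\pri},\chi_{\pri})$. In the ramified case $Q(\pi_{\pri},\chi_{\pri}) = q_{\pri}^{-\beta_{\pri}(n^2+n)/2}\tau(\chi_{\pri})^n = Q'(\pi_{\pri},\chi_{\pri})\cdot \tau(\chi_{\pri})^n\cdot q_{\pri}^{-\beta_{\pri} n^2}$. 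Regrouping the $\chi_v(\varpi_v)^{-\delta_v}$ terms at unramified places with the $\tau(\chi_{\pri})$ terms at ramified $\pri|p$ assembles, via \eqref{eq:tau chi f}, to $\tau(\chi_f)^n$. The residual powers of $q_{\pri}$ produced in the ramified directions must then cancel against the ratio $\mathrm{vol}(L_1)/\mathrm{vol}(L_\beta) = \prod_{\pri|p}[L_{\pri}^1 : L_{\pri}^{\beta_{\pri}}]$.

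The main obstacle is this last bookkeeping: one has to compute the index $[L_{\pri}^1 : L_{\pri}^{\beta_{\pri}}]$ directly from the definition $L_{\pri}^{\beta_{\pri}} = H(\Zp)\cap J_{\pri}\cap(u_{\pri}^{-1}t_{\pri}^{\beta_{\pri}}J_{\pri}t_{\pri}^{-\beta_{\pri}}u_{\pri})$, and verify that it compensates exactly the $q_{\pri}$-powers arising from the ramified cases (and is trivial when $\beta_{\pri}=1$, absorbed into $\mathrm{vol}(L_1)$). This is a concrete parahoric calculation, in the spirit of -- but structurally richer than -- the totally real case treated in \cite[\S5]{BDW20}; the non-triviality comes from the fact that $u_{\pri}$ is an open-orbit representative for $G/H$ and $t_{\pri}^{\beta_{\pri}}$ scales only the $\GL_n$-block. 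Once the index is pinned down, all remaining $\chi$-dependence matches the prescribed right-hand side and the theorem follows.
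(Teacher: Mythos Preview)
Your proof is correct and follows the paper's own argument essentially verbatim: pass from $*$ to $\cdot$ via \eqref{eq:ev dot vs *}, compare $\sEv_\chi^{[\cdot]}$ with $\cEv_\chi$ and then with $\cEv_\chi^{\mathrm{JST}}$ via Lemma~\ref{lem:Ev 1 vs Ev 2}, apply Corollary~\ref{cor:critical L-value}, insert Proposition~\ref{prop:zeta p}, and regroup the local Gauss-sum pieces into $\tau(\chi_f)^n$ via \eqref{eq:tau chi f}.

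The only divergence is that you present the computation of $\mathrm{vol}(L_1)/\mathrm{vol}(L_\beta)$ as ``the main obstacle'', a parahoric calculation ``structurally richer than'' the totally real case. It is not: the subgroup $L_{\pri}^{\beta_{\pri}}$ is defined purely in terms of local data at $\pri$, so the index computation does not see the base field at all and is literally the same lemma already proved in the totally real setting. The paper accordingly dispatches it in one line by citing \cite[Lem.~4.4]{BDGJW} (cf.\ Lemma~\ref{lem:1 mod p}), giving $\mathrm{vol}(L_\beta) = \mathrm{vol}(L_1)\cdot\delta_B(t_p^\beta) = \mathrm{vol}(L_1)\cdot\prod_{\pri|p}q_{\pri}^{-\beta_{\pri}n^2}$; the resulting factor $q_{\pri}^{\beta_{\pri}n^2}$ then combines uniformly with $Q(\pi_{\pri},\chi_{\pri})$ to produce $Q'(\pi_{\pri},\chi_{\pri})$ together with the local contribution to $\tau(\chi_f)$, without needing to treat the volume differently at ramified versus unramified $\pri$. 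So your ``obstacle'' is already in the literature, and your reference to \cite[\S5]{BDW20} should instead point to \cite{BDGJW}.
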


\begin{proof}
By \eqref{eq:ev dot vs *} (to pass from $*$ to $\cdot$ evaluations, introducing $\lambda(t_p^\beta)$), Corollary \ref{cor:critical L-value} and Lemma \ref{lem:Ev 1 vs Ev 2}, noting that $u^{-1}t_p^\beta$ is trivial outside primes above $p$, we have
	\begin{align*}
	i_p^{-1}\circ \sEv_{\chi}^{[*]}\circ \Theta_{[\omega]}^{K,i_p}(\varphi_f^{\mathrm{FJ},\alpha}) = \lambda(t_p^\beta)& \cdot  \frac{ w^{\bj}(\det w_n)}{\mathrm{vol}(L_\beta)} \cdot \prod_{v\nmid p\infty}\chi_v(\varpi_v)^{-n\delta_v}\\
	&\times \prod_{\pri|p}Z_{\pri}\big([u^{-1}t_{\pri}^{\beta_{\pri}}\cdot\varphi_{\pri}^{\alpha_{\pri}}],\chi_{\pri},\tfrac12\big) \cdot \frac{L^{(p)}\big(\pi\times\chi,\tfrac12\big)}{\Omega_{\pi,\chi_\infty}}.
\end{align*}
By \cite[Lem.\ 4.4]{BDGJW} (cf.\ Lemma \ref{lem:1 mod p}) we have 
\begin{equation}\label{eq:vol Lbeta}\textstyle
	\mathrm{vol}(L_\beta) = \mathrm{vol}(L_1) \cdot \delta_B(t_p^\beta) = \mathrm{vol}(L_1) \cdot \prod_{\pri|p}q_{\pri}^{-\beta_{\pri}n^2}.
\end{equation}
The twisted local zeta integrals at $p$ were evaluated\footnote{This is where we want to use $\beta \in \Z_{\geq 1}^{\pri|p}$, rather than the `true' conductor $p^{\beta_0}$; for $p$-adic interpolation it is necessary to twist non-trivially at each $\pri$, even when the conductor $\beta_{0,\pri}$ itself is trivial.} in Proposition \ref{prop:zeta p}. In the ramified case, the term $q_{\pri}^{-\beta_{\pri}n^2}$ is the difference between $Q$ (from that proposition) and $Q'$ (here); in the unramified case, it cancels with the power of $q_{\pri}^{-n^2}$ that appears in $Q$. Via \S\ref{sec:p-adic Hecke}, the local terms involving $\det w_n$ combine to give $\chi_{[p]}(\det w_n)$. Combining all of this, we get the factor $Q'(\pi_{\pri},\alpha_{\pri})$ and $\chi_{\pri}(\varpi_{\pri}^{-\delta_{\pri}})$ (resp.\ $\tau(\chi_{\pri})$) if $\chi_{\pri}$ is unramified (resp.\ ramified). Finally we conclude by the identity \eqref{eq:tau chi f}.
\end{proof}


\section{$p$-adic interpolation of evaluation maps}\label{sec:interpolate evaluations}

We will interpolate the $L$-values appearing on the right-hand side of Theorem \ref{thm:critical value} by interpolating the left-hand side, i.e.\ $\sEv_{\chi}^{[*]}$, as $\chi$ varies. The main result of \S\ref{sec:interpolate evaluations} is Proposition \ref{prop:evaluations interpolate}.

\subsection{Alignment of branching laws}

The evaluation maps of the previous section depended on choices of bases 
\begin{equation}\label{eq:kappa u}
	\kappa_{\bj} \in \Hom_{H(\Qp)}(V_\lambda^\vee(L), \VH(L)), \qquad u_{\bj} \in \VH(L) \cong L.
\end{equation}
The choices can be combined into a single choice $\kappa_{\bj}^\circ : V_\lambda^\vee(L) \to L$ defined by 
\begin{equation}\label{eq:kappa circ}
	\kappa_{\bj}(\mu) = \kappa_{\bj}^\circ(\mu) \cdot u_{\bj} \qquad \forall \mu \in V_{\lambda}^\vee(L).
\end{equation}

In Theorem \ref{thm:critical value}, these choices manifested themselves on the left-hand side in the definition of $\sEv_{\chi}^{[*]}$, and on the right-hand side (via $i_p$) in the zeta integral at infinity. To interpolate the $\sEv_{\chi}^{[*]}$ requires a careful alignment of the choices as $\bj$ varies, which we carry out here. The main idea is that we can collapse all the different choices (as $\bj$ varies) of branching law $\kappa_{\bj}$ for $H \subset G$ onto a \emph{single} choice of branching law for $G_n \defeq \mathrm{Res}_{F/Q}(\GL_n) \subset H$, diagonally embedded. 

Write $\lambda = (\lambda',\lambda'')$, where $\lambda',\lambda''$ are two weights for $G_n$. Then as $H$-representations $V_\lambda^H \cong V_{\lambda'}^{G_n} \otimes V_{\lambda''}^{G_n}$. We can restrict this under the diagonal embedding of $G_n$, obtaining:

\begin{lemma}
	The restriction $V_{\lambda}^H(L)|_{G_n}$ to a diagonal copy of $G_n$ contains the $G_n$-representation $w_p^{\usw}\circ \det$ with multiplicity 1.
\end{lemma}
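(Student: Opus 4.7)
\medskip
\noindent\textbf{Proof plan.} The plan is to reduce the statement to a tensor-product computation for $\GL_n$ using the purity of $\lambda$. Recall that if we write $\lambda = (\lambda',\lambda'')$, then $\lambda'_{\sigma,i} = \lambda_{\sigma,i}$ and $\lambda''_{\sigma,i} = \lambda_{\sigma,n+i}$ for $i=1,\dots,n$. The individual purity condition $\lambda_{\sigma,i} + \lambda_{\sigma,2n+1-i} = \sw_\sigma$ established in \S\ref{sec:weights} then gives
\[
\lambda''_{\sigma,i} \;=\; \sw_\sigma - \lambda'_{\sigma,\,n+1-i},
\]
so $\lambda''_\sigma$ is obtained from $\lambda'_\sigma$ by reversing the order and negating, then shifting by $\sw_\sigma$.

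First I would translate this into a representation-theoretic statement at each $\sigma \in \Sigma$. For $\GL_n$, the dual of the irreducible algebraic representation of highest weight $(a_1,\dots,a_n)$ is irreducible of highest weight $(-a_n,\dots,-a_1)$. Hence the identity above yields, at each embedding and so globally,
\[
V_{\lambda''}^{G_n} \;\cong\; \bigl(V_{\lambda'}^{G_n}\bigr)^{\vee} \,\otimes\, \bigl(w_p^{\usw}\circ \det\bigr),
\]
where the second factor is the character $g \mapsto w_p^{\usw}(\det g)$ of $G_n(\Qp)$ (recalling $w_p^{\usw}$ from \eqref{eq:w^j}).

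Next I would compute the restriction to the diagonal $G_n \hookrightarrow H = G_n \times G_n$. Using the isomorphism $V_\lambda^H \cong V_{\lambda'}^{G_n} \otimes V_{\lambda''}^{G_n}$ and substituting, we obtain
\[
V_{\lambda}^{H}(L)\big|_{G_n} \;\cong\; V_{\lambda'}^{G_n}(L) \otimes \bigl(V_{\lambda'}^{G_n}(L)\bigr)^{\vee} \otimes \bigl(w_p^{\usw}\circ\det\bigr) \;\cong\; \mathrm{End}\bigl(V_{\lambda'}^{G_n}(L)\bigr) \otimes \bigl(w_p^{\usw}\circ\det\bigr).
\]
Finally I would invoke Schur's lemma: since $V_{\lambda'}^{G_n}$ is irreducible, $\mathrm{Hom}_{G_n}\bigl(\mathbf{1},\, V_{\lambda'}^{G_n} \otimes (V_{\lambda'}^{G_n})^\vee\bigr)$ is one-dimensional, spanned by the identity/trace. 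Tensoring through by the character $w_p^{\usw}\circ\det$ preserves this multiplicity, so this character occurs in the restriction exactly once, as claimed.

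There is no real obstacle; the only point demanding care is checking the sign/shift identification in the purity step above. This could be made completely explicit by writing the highest-weight theory one embedding at a time, and I would double-check that the character that appears is indeed $w_p^{\usw}\circ\det$ (i.e.\ that the shifts by $\sw_\sigma$ combine to give precisely the $p$-adic character defined via $w^{\usw}$), rather than some twist thereof.
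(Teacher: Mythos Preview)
Your proof is correct and follows essentially the same route as the paper's: both use the individual purity relation $\lambda_{\sigma,i}+\lambda_{\sigma,2n+1-i}=\sw_\sigma$ to identify $V_{\lambda''}^{G_n}\cong (V_{\lambda'}^{G_n})^\vee\otimes(w_p^{\usw}\circ\det)$, and then conclude by Schur's lemma that the trivial representation (hence the character $w_p^{\usw}\circ\det$) appears with multiplicity one in $V_{\lambda'}^{G_n}\otimes(V_{\lambda'}^{G_n})^\vee$. The only cosmetic difference is that the paper carries this out embedding-by-embedding and then tensors over $\sigma\in\Sigma$, whereas you package the same computation globally from the start.
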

\begin{proof}
For each $\sigma \in \Sigma$, the individual weight $\lambda_\sigma \in \Z^{2n}$ is pure by \S\ref{sec:weights}; that is, we have $\lambda_{\sigma,i} + \lambda_{\sigma,2n+1-i} = \sw_\sigma$ for $1 \leq i \leq n$. Combining over all $i$, this means that
\[
	\lambda''_\sigma = (\lambda_{\sigma,n+1},\dots, \lambda_{\sigma,2n}) = (-\lambda_{\sigma,n} + \sw_\sigma, \dots, -\lambda_{\sigma,1} + \sw_\sigma) = (\lambda'_\sigma)^\vee + (\sw_\sigma,...,\sw_\sigma),
\]
noting that $(\lambda_\sigma')^\vee_i = -\lambda_{\sigma, n+1-i}$. Thus $\lambda'' = (\lambda')^\vee + (\usw,\cdots, \usw)$. It follows that
	\[
V_{\lambda'}^{G_n} \otimes V_{\lambda''}^{G_n} \cong V_{\lambda'}^{G_n} \otimes (V_{\lambda'}^{G_n})^\vee \otimes \mathrm{det}^{\usw},
	\]
	which contains $\det^{\usw}$ with multiplicity 1. 
\end{proof}

Fix a generator $v^H_\lambda \in w_p^{\usw}\circ \det \subset V_{\lambda}^H(L)|_{G_n}$. Recall the description $V_\lambda = \Ind_{\overline{Q}}^G V_\lambda^H$ from \eqref{eq:double induction}. Let 
\begin{equation}\label{eq:N_Q^times}
	N_Q^\times(\zp) \defeq \left\{\smallmatrd{1}{X}{0}{1} \in N_Q(\zp) : X \in G_n(\Zp)\right\}.
\end{equation}

\begin{proposition}\label{prop:explicit v_j}
	\begin{enumerate}[(i)] \s
		\item	For each $\bj$ critical for $\lambda$, there exists a unique
		\[
		\big[v_{\bj} : G(\Zp) \to V_\lambda^H(L) \big] \in V_\lambda(L)
		\]
		with
		\[
		v_{\bj}\left[\smallmatrd{1}{X}{0}{1}\right] = w_p^{\bj}(\det(w_nX)) \cdot \Big(\left\langle \smallmatrd{X}{}{}{1}\right\rangle_\lambda \cdot v_\lambda^H\Big)
		\]
		for all $\smallmatrd{1}{X}{0}{1} \in N_Q^\times(\Zp)$. 	
		
		\item $v_{\bj}$ generates $V^H_{-\bj,\underline{\sw}+\bj}(L) \subset V_\lambda(L)|_{H(\Qp)}.$
	\end{enumerate}
\end{proposition}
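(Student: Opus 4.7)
My plan is to build $v_{\bj}$ as a generator of an explicit $H$-subrepresentation of $V_\lambda(L)$ via the branching lemma, then read the formula in (i) off from $H$-equivariance. This makes part (ii) almost immediate and reduces (i) to a single normalisation.

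First I would invoke Lemma \ref{lem:branching law}: the criticality of $\bj$ forces $\mathrm{Hom}_H(V_\lambda^\vee, V^H_{\bj,-\usw-\bj})$ to be a line, so dually $\mathrm{Hom}_H(V^H_{-\bj,\usw+\bj}, V_\lambda)$ is a line, and its non-zero image yields a one-dimensional $H$-subrepresentation of $V_\lambda(L)$ on which $(h_1,h_2) \in H(\Qp)$ acts by $w_p^{-\bj}(\det h_1)\, w_p^{\usw+\bj}(\det h_2)$. Fixing a non-zero generator $\tilde v_{\bj}$ already handles (ii), up to identifying $v_{\bj}$ with a scalar multiple of $\tilde v_{\bj}$. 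To pin down the formula in (i), for $X \in G_n(\Zp)$ I would apply the $H$-equivariance $\tilde v_{\bj}(g\cdot(X,1)) = w_p^{-\bj}(\det X)\,\tilde v_{\bj}(g)$ to $g = \smallmatrd{1}{X}{0}{1}$, together with the $\overline{Q}$-equivariance of $V_\lambda = \Ind_{\overline{Q}}^G V_\lambda^H$ applied to the decomposition $\smallmatrd{X}{X}{0}{1} = \smallmatrd{X}{0}{0}{1}\smallmatrd{1}{1_n}{0}{1}$. Comparing the two expressions produces
\[
\tilde v_{\bj}\!\left[\smallmatrd{1}{X}{0}{1}\right] \;=\; w_p^{\bj}(\det X) \cdot \langle \smallmatrd{X}{}{}{1}\rangle_\lambda \cdot \tilde v_{\bj}\!\left[\smallmatrd{1}{1_n}{0}{1}\right].
\]
A parallel computation with diagonal $(g,g)$ then shows $\tilde v_{\bj}(\smallmatrd{1}{1_n}{0}{1})$ transforms under diagonal $G_n \subset H$ by $w_p^{\usw}\circ\det$; by the lemma preceding the proposition, this isotypic line in $V_\lambda^H|_{G_n}$ is precisely $L \cdot v_\lambda^H$, so $\tilde v_{\bj}(\smallmatrd{1}{1_n}{0}{1}) = c \cdot v_\lambda^H$ for some $c \in L$. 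Rescaling $v_{\bj} \defeq c^{-1} w_p^{\bj}(\det w_n)\, \tilde v_{\bj}$ then gives the desired formula.

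The step requiring most care is verifying $c \neq 0$. My approach is a density argument: if $c = 0$, the displayed formula forces $\tilde v_{\bj}$ to vanish on $N_Q^\times(\Zp)$, which is $p$-adically open in $N_Q(\Zp)$; since elements of $V_\lambda$ restrict to rigid analytic (indeed algebraic) functions, $\tilde v_{\bj}$ then vanishes on all of $N_Q$, hence on the Zariski-open big cell $\overline{Q} \cdot N_Q$ by $\overline{Q}$-equivariance, and hence on all of $G$---contradicting $\tilde v_{\bj}\neq 0$. The same density argument simultaneously gives uniqueness in (i): any two elements of $V_\lambda$ agreeing on $N_Q^\times(\Zp)$ must agree on the whole big cell by $\overline{Q}$-equivariance, and hence everywhere by Zariski density.
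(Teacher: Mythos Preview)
Your proof is correct and follows essentially the same route as the paper's, which largely defers the computational steps to \cite[Lem.\ 5.7, 5.8, 5.11]{BDW20}: both start from the branching law to produce a generator of the $H$-line, evaluate at $\smallmatrd{1}{1_n}{0}{1}$ to land in the $w_p^{\usw}\circ\det$-isotypic piece of $V_\lambda^H|_{G_n}$, rescale, and deduce the formula plus uniqueness from Zariski-density of $\overline{N}_Q H N_Q^\times(\Zp)$ in $G(\Zp)$. Your density argument for $c\neq 0$ is exactly the content of the non-vanishing step the paper imports from \cite[Lem.\ 5.8]{BDW20}.
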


\begin{proof}
	This follows \cite[Lem.\ 5.11]{BDW20}. Take \emph{some} choices of $\kappa_{\bj}$ and $u_{\bj}$. These define dual bases
	\begin{equation}\label{eq:kappa u}
		\kappa_{\bj}^\vee \in \Hom_{H(\Qp)}(V_{-\bj,\usw+\bj}^H(L), V_\lambda(L)), \qquad u_{\bj}^\vee \in V^H_{-\bj,\usw+\bj}(L).
	\end{equation} 
Thus $\kappa_{\bj}^\vee(u_{\bj}^\vee) \in V_\lambda(L)$; and via \eqref{eq:double induction}, we may define
\[
	v_{\lambda,\bj}^H \defeq \kappa_{\bj}^\vee(u_{\bj}^\vee)\left[\smallmatrd{1_n}{1_n}{0}{1_n}\right] \in V_\lambda^H(L).
\]
Arguing exactly as in \cite[Lem.\ 5.8]{BDW20}, we see $v_{\lambda,\bj}^H$ is a non-zero element of $w_p^{\usw}\circ \det \subset V_{\lambda}^H(L)|_{G_n}$, and hence -- up to rescaling $\kappa_{\bj}$ -- we may assume $v_{\lambda,\bj}^H = v_{\lambda}^H$, independent of $\bj$.  

For these rescaled choices, define $v_{\bj} \defeq \kappa_{\bj}^\vee(u_{\bj}^\vee)$. Then (i) follows exactly as in \cite[Lem.\ 5.7]{BDW20}, with uniqueness following from Zariski-density of $\overline{N}_QHN_Q^\times(\Zp) \subset G(\Zp)$ (as in \cite[Lem.\ 5.11(i)]{BDW20}). Part (ii) is then identical to \cite[Lem.\ 5.11(ii)]{BDW20}.
\end{proof}

\begin{definition}
		We fix the choice of $\kappa_{\bj}^\circ : V_{\lambda}^\vee(L) \to L$ by setting $\kappa_{\bj}^\circ(\mu) = \mu(v_{\bj})$. Note this corresponds to the choices of $\kappa_{\bj}$ and $u_{\bj}$ in the proof, after rescaling so that the attached $v_{\lambda,\bj}^H = v_{\lambda}^H$.
\end{definition}

\subsection{Branching laws for distributions}

We now construct a `master branching law' $\kappa_\lambda$, interpolating all the $\kappa_{\bj}$ above. Let $\cA(\Galp,L)$ be the space of locally analytic functions on $\Galp$, with an $H(\A)$-action by
\[
(h_1,h_2) * f(x) \defeq \eta_{[p]}(h_2) f(\det(h_1^{-1}h_2) x).
\]
This induces a dual left-action on $\cD(\Galp,L) \defeq \mathrm{Hom}_{\mathrm{cts}}(\cA(\Galp,L),L)$. Recall if $\chi \in \mathrm{Crit}_p(\pi)$, then $\chi_{[p]}$ induces a character on $\Galp$ via \S\ref{sec:p-adic Hecke}.  The following is immediate.

\begin{lemma}\label{lem:D action}
\begin{enumerate}[(i)]\s
\item $H(\R)^\circ$ and $H(\Q)$ act trivially on $\cD(\Galp,L)$.
\item The map $\mu \mapsto \mu(\chi_{[p]})$ defines an $H(\A)$-module map $\cD(\Galp,L) \to V_{\chi,[p]}^H(L)$.
\end{enumerate}
\end{lemma}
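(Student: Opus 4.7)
The plan is to unwind the definitions; both parts follow from the construction of $\chi_{[p]}$ and $\eta_{[p]}$ in \S\ref{sec:p-adic Hecke}, together with the structure of $\Galp$ recorded in \eqref{eq:cl SES}.

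First I would observe that the $H(\A)$-action on $\cA(\Galp,L)$ factors through $(h_1,h_2) \mapsto (\det h_1,\det h_2) \in \A_F^\times \times \A_F^\times$ and the reciprocity projection $\A_F^\times \twoheadrightarrow \cl_F^+(p^\infty) \cong \Galp$. By \eqref{eq:cl SES} the kernel of this projection contains $F^\times$ and $F_\infty^+$.

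For (i), given $(h_1,h_2) \in H(\Q)$ (resp.\ $H(\R)^\circ$), the element $\det(h_1)^{-1}\det(h_2)$ lies in $F^\times$ (resp.\ $F_\infty^+$), hence acts trivially on $\Galp$, so the translation $f \mapsto f(h_1^{-1}h_2\, x)$ is the identity. It remains to check $\eta_{[p]}(\det h_2)=1$ in both cases. For $h_2 \in H(\Q)$ this is the standard fact (already used in \S\ref{sec:p-adic Hecke} for $\chi_{[p]}$) that $\eta_{[p]}$ is trivial on $F^\times$: $\eta$ is a Hecke character, and the archimedean and $p$-adic correction factors $w_\infty^{\usw}$ and $w_p^{\usw}$ agree on $F^\times$ after $i_p$, so cancel. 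For $h_2 \in H(\R)^\circ$, the finite and $p$-adic components of $\det h_2$ are trivial and $\mathrm{sgn}((\det h_2)_\infty) = +1$ (by positivity of determinants on the identity component at real places), so all three factors defining $\eta_{[p]}$ contribute $1$.

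For (ii), I would verify equivariance by direct computation. Since $\chi_{[p]}$ is a group homomorphism on $\Galp$, the action formula gives, as elements of $\cA(\Galp,L)$,
\[
(h_1,h_2) * \chi_{[p]} \;=\; \eta_{[p]}(\det h_2)\,\chi_{[p]}(\det h_1)^{-1}\chi_{[p]}(\det h_2)\cdot \chi_{[p]}.
\]
Dualising, $\bigl((h_1,h_2)*\mu\bigr)(\chi_{[p]}) = \mu\bigl((h_1,h_2)^{-1} * \chi_{[p]}\bigr)$, which after substituting $(h_1^{-1},h_2^{-1})$ into the formula above and applying linearity equals
\[
\chi_{[p]}(\det h_1)\,\chi_{[p]}(\det h_2)^{-1}\,\eta_{[p]}(\det h_2)^{-1}\cdot \mu(\chi_{[p]}).
\]
This is exactly the scalar by which $(h_1,h_2)$ acts on $V_{\chi,[p]}^H(L)$, establishing equivariance.

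Neither statement presents any serious obstacle; both are bookkeeping exercises against the definitions, with the same cancellations that underlie the construction of $\chi_{[p]}$ providing the triviality in (i).
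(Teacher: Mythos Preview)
Your proposal is correct and matches the paper's approach: the paper simply states ``the following is immediate'' and gives no argument, so you have done exactly what the paper implicitly asks the reader to do---unwind the definitions of $\chi_{[p]}$, $\eta_{[p]}$, and the action on $\cA(\Galp,L)$. Your computations in both parts are accurate, including the cancellation $w_\infty^{\usw}=w_p^{\usw}$ on $F^\times$ and the sign triviality on $H(\R)^\circ$.
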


Recall from \eqref{eq:cl SES} there is a natural map $(\cO_F\otimes \Zp)^\times \to \Galp$, which we denote $\jmath$.  Given $f \in \cA(\Galp,L)$ and $x \in (\cO_F\otimes\Zp)^\times$, abusing notation we write $f(x) \defeq f(\jmath(x))$. For such $f$, define a map
\begin{align}\label{eq:v_lambda}
v_\lambda(f) : N_Q^\times(\Zp) &\longrightarrow V_\lambda^H(L),\\
\smallmatrd{1}{X}{0}{1} &\longmapsto f(\det(w_n X)) \left(\langle\smallmatrd{X}{}{}{1}\rangle_\lambda \cdot v_\lambda^H\right),\notag
\end{align}
noting $\det(X) \in (\cO_F\otimes\Zp)^\times$ by the definition  \eqref{eq:N_Q^times} of $N_Q^\times(\Zp)$. Extending by 0, $v_\lambda(f)$ is a locally analytic function $N_Q(\Zp) \to V_\lambda^H(L)$, whence it defines an element $v_\lambda(f) \in \cA_\lambda$ via the parahoric transformation law \eqref{eq:parahoric transform}.

If there was a function $w_p^{\bj} \in \cA(\Galp,L)$, then comparing Proposition \ref{prop:explicit v_j} with \eqref{eq:v_lambda}, we would formally have $v_\lambda(w_p^{\bj})(n) = v_{\bj}(n)$ for  $n \in N_Q^\times(\Zp)$. Unfortunately the function $w_p^{\bj}$ on $(F\otimes \Qp)^\times$ does \emph{not} induce a function on $\Galp$ in general (since it is not $\Q^\times$-invariant). However, if $\chi \in \mathrm{Crit}_p(\pi)$ with infinity type $\bj$ and conductor dividing $p^\beta$, then $\chi_{[p]}$ \emph{is} a function on $\Galp$, and $\chi_{[p]}(x) = w_p^{\bj}(x)$ for $x \in \A_F^\times$ with $x \equiv 1\newmod{p^\beta}$. As such, recalling that $\beta = (\beta_{\pri}) \in \Z_{\geq 1}^{\pri|p}$, let:
\begin{itemize}\s
\item $\sU_\beta = \jmath(1+p^\beta\cO_F\otimes\Zp) \subset \Galp$,
\item $\cA(\sU_\beta,L) \subset \cA(\Galp,L)$ be the space of functions supported on $\sU_\beta$,
\item $N^\beta_Q(\Zp) \defeq \{n \in N_Q(\Zp) : n \equiv \smallmatrd{1}{w_n}{0}{1_n} \newmod{p^\beta}\} \subset N_Q^\times(\Zp),$
\item  $J_p^\beta \defeq J_p \cap \overline{N}_Q(\Zp)H(\Zp)N_Q^\beta(\Zp) \subset J_p$,
\item $\cA_\lambda^\beta \subset \cA_\lambda$ (resp.\ $\cD_\lambda^\beta \subset \cD_\lambda$) be the space of functions (resp.\ distributions) supported on $J_p^\beta$.
\end{itemize}
If $n = \smallmatrd{1}{X}{0}{1} \in N_Q^\beta(\Zp)$, then $X \equiv w_n \newmod{p^\beta}$, so $\det(w_nX) \equiv 1 \newmod{p^\beta}$. If $f \in \cA(\sU_\beta,L)$, we can thus define a function $v_\lambda^\beta(f) : \N_Q^\beta(\Zp) \to V_\lambda^H(L)$ exactly as in \eqref{eq:v_lambda}. After extending by 0 to $N_Q(\Zp)$, as above we  obtain a function
\[
v_\lambda^\beta : \cA(\sU_\beta,L) \longrightarrow \cA_\lambda^\beta. 
\]
Then, recalling that $\chi$ has conductor dividing $p^\beta$ and infinity type $\bj$:
\begin{lemma}
If $g \in J_p^\beta$, then $v_\lambda^\beta(\chi_{[p]})(g) = v_{\bj}(g)$.
\end{lemma}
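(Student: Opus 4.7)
The plan is to reduce the identity to a pointwise comparison on the unipotent piece $N_Q^\beta(\Zp)$, then match the two formulas there. First I would observe that $v_{\bj} \in V_\lambda(L)$ sits naturally inside $\cA_\lambda(L)$ via the double-induction description \eqref{eq:double induction}, so both $v_\lambda^\beta(\chi_{[p]})$ and $v_{\bj}|_{J_p}$ satisfy the parahoric transformation law \eqref{eq:parahoric transform} under left multiplication by $\overline{N}_Q(\Zp)\,H(\Zp)$. Using the decomposition $J_p^\beta = \overline{N}_Q(\Zp)\,H(\Zp)\,N_Q^\beta(\Zp)$ (which is unique, being a subset of the Iwahori decomposition of $J_p$), any $g \in J_p^\beta$ factors as $g = \overline{n}\,h\,n$ with $\overline{n} \in \overline{N}_Q(\Zp)$, $h \in H(\Zp)$, and $n \in N_Q^\beta(\Zp)$, and the transformation law reduces the claim to showing $v_\lambda^\beta(\chi_{[p]})(n) = v_{\bj}(n)$ for every $n \in N_Q^\beta(\Zp)$.

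For such $n = \smallmatrd{1}{X}{0}{1}$, the defining congruence $X \equiv w_n \pmod{p^\beta}$ combined with $w_n^2 = 1$ yields $w_n X \equiv 1 \pmod{p^\beta}$, hence $\det(w_n X) \in 1 + p^\beta(\cO_F \otimes \Zp)$. The next step is to verify that $\chi_{[p]}$ and $w_p^{\bj}$ agree on $\jmath(1 + p^\beta(\cO_F \otimes \Zp)) \subset \Galp$. Unwinding the definition of $\chi_{[p]}$ from \S\ref{sec:p-adic Hecke}, the archimedean sign contribution is trivial on this subgroup, and $\chi_f$ is identically $1$ there because $\chi$ has conductor dividing $p^\beta$; so only the factor $w_p^{\bj}$ survives. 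Substituting $\chi_{[p]}(\det(w_n X)) = w_p^{\bj}(\det(w_n X))$ into the defining formula \eqref{eq:v_lambda} at $f = \chi_{[p]}$ and comparing with the formula for $v_{\bj}(n)$ from Proposition \ref{prop:explicit v_j}(i) produces the sought equality.

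No step here seems genuinely difficult. The only real bookkeeping issue is to keep straight which transformation law applies where, and to note that while $v_\lambda^\beta(\chi_{[p]})$ is supported on $J_p^\beta$ by construction, we do \emph{not} need (nor expect) an analogous vanishing property for $v_{\bj}$; the statement only asserts equality on $J_p^\beta$, which is precisely where both functions take values governed by the same unipotent-supported formula once the $\overline{N}_Q H$-parts are absorbed.
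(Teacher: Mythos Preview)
Your proof is correct and follows essentially the same approach as the paper: reduce via the parahoric transformation law to $N_Q^\beta(\Zp)$, then use $\det(w_nX)\equiv 1\pmod{p^\beta}$ together with the conductor bound on $\chi$ to identify $\chi_{[p]}(\det(w_nX))$ with $w_p^{\bj}(\det(w_nX))$ and compare with Proposition~\ref{prop:explicit v_j}. You spell out the Iwahori-type decomposition and the vanishing of the sign and finite parts of $\chi_{[p]}$ more explicitly than the paper does, but the argument is the same.
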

\begin{proof}
As both $v_\lambda^\beta(\chi_{[p]})$ and $v_{\bj}$ are functions $J_p \to V_\lambda^H(L)$ satisfying the same parahoric transformation law, it suffices to show this for $g = \smallmatrd{1}{X}{0}{1} \in N_Q^\beta(\Zp)$. Then $\det(w_nX) \equiv 1\newmod{p^\beta}$, so $\chi_{[p]}(\det(w_nX)) = w_p^{\bj}(\det(w_nX))$. Thus when we plug $\chi_{[p]}$ into \eqref{eq:v_lambda}, we recover the formula for $v_{\bj}$ in Proposition \ref{prop:explicit v_j}.
\end{proof}

Dualising $v_\lambda^\beta$ gives a map $\kappa_\lambda^\beta : \cD_\lambda^\beta \to \cD(\sU_\beta,L) \subset \cD(\Galp,L)$. 

\begin{proposition}\label{prop:master branching law}
If $\chi \in \mathrm{Crit}_p(\pi)$ has conductor dividing $p^\beta$ and infinity type $\bj$, we have a commutative diagram
\[
\xymatrix@C=20mm{\cD_\lambda^\beta(L) \ar[d]^{r_\lambda}\ar[r]^-{\kappa_{\lambda}^\beta} & \cD(\Galp,L) \ar[d]^{\mu \mapsto \mu(\chi_{[p]})}\\
V_\lambda^\vee(L) \ar[r]^-{\kappa_{\bj}^\circ} & L.
}
\]
\end{proposition}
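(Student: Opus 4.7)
The proposition is a formal consequence of the preceding lemma together with the support conventions defining $\cD_\lambda^\beta$ and $\cD(\sU_\beta,L)$. The plan is simply to evaluate both compositions in the diagram on an arbitrary $\mu \in \cD_\lambda^\beta(L)$ and check the resulting elements of $L$ coincide.

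First I would unwind the top-then-right path. By definition of $\kappa_\lambda^\beta$ as the continuous transpose of $v_\lambda^\beta : \cA(\sU_\beta,L) \to \cA_\lambda^\beta$, and using the natural inclusion $\cA_\lambda^\beta \hookrightarrow \cA_\lambda$ dual to $\cD_\lambda \twoheadrightarrow \cD_\lambda^\beta$, one obtains
\[
\bigl[\kappa_\lambda^\beta(\mu)\bigr](\chi_{[p]}) \;=\; \mu\bigl(v_\lambda^\beta(\chi_{[p]}|_{\sU_\beta})\bigr),
\]
where $\chi_{[p]}$ is paired against a distribution in $\cD(\sU_\beta,L)$ via restriction (which is well-defined since $\chi \in \mathrm{Crit}_p(\pi)$ has conductor dividing $p^\beta$, so $\chi_{[p]}$ restricted to $\sU_\beta$ lies in $\cA(\sU_\beta,L)$). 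Next I would unwind the left-then-bottom path. By definition $r_\lambda$ is the dual of the inclusion $V_\lambda(L) \hookrightarrow \cA_\lambda(L)$, and $\kappa_{\bj}^\circ(\nu) = \nu(v_{\bj})$ for $\nu \in V_\lambda^\vee(L)$, so
\[
\kappa_{\bj}^\circ\bigl(r_\lambda(\mu)\bigr) \;=\; r_\lambda(\mu)(v_{\bj}) \;=\; \mu(v_{\bj}),
\]
where on the right-hand side we regard $v_{\bj} \in V_\lambda(L)$ as an element of $\cA_\lambda(L)$.

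The problem thus reduces to the identity $\mu\bigl(v_\lambda^\beta(\chi_{[p]}|_{\sU_\beta})\bigr) = \mu(v_{\bj})$ in $L$. Here the immediately preceding lemma gives exactly what is needed: $v_\lambda^\beta(\chi_{[p]}|_{\sU_\beta})$ and $v_{\bj}$ agree as functions on $J_p^\beta$. Moreover $v_\lambda^\beta(\chi_{[p]}|_{\sU_\beta}) \in \cA_\lambda^\beta$, so it vanishes off $J_p^\beta$ by construction. Consequently the difference $v_{\bj} - v_\lambda^\beta(\chi_{[p]}|_{\sU_\beta}) \in \cA_\lambda$ vanishes identically on $J_p^\beta$. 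Since $\mu \in \cD_\lambda^\beta(L)$ is by definition supported on $J_p^\beta$, i.e.\ annihilates every function in $\cA_\lambda$ which vanishes on $J_p^\beta$, we conclude that $\mu$ takes the same value on $v_{\bj}$ and on $v_\lambda^\beta(\chi_{[p]}|_{\sU_\beta})$, which gives the commutativity.

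\textbf{Main obstacle.} There is no serious difficulty: the genuine content was already packaged into the preceding pointwise identity $v_\lambda^\beta(\chi_{[p]})(g) = v_{\bj}(g)$ for $g \in J_p^\beta$, which in turn rests on the explicit description of $v_{\bj}$ in Proposition \ref{prop:explicit v_j} and the fact that $\chi_{[p]}$ coincides with $w_p^{\bj}$ on $1 + p^\beta(\cO_F\otimes\Zp)$. The only point requiring a little care is the convention on supports: writing out explicitly that $\cA_\lambda^\beta \subset \cA_\lambda$ and $\cD_\lambda^\beta = (\cA_\lambda^\beta)^\vee$ are Koszul-dual, so that a distribution in $\cD_\lambda^\beta$ pairs with any $f \in \cA_\lambda$ through its restriction to $J_p^\beta$ alone. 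Once this is made explicit the diagram commutes on the nose.
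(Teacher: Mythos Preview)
Your proof is correct and follows essentially the same route as the paper: evaluate both compositions on $\mu \in \cD_\lambda^\beta(L)$, use duality to reduce to the equality $\mu(v_\lambda^\beta(\chi_{[p]})) = \mu(v_{\bj})$, and conclude via the preceding lemma together with the support condition on $\mu$. The paper compresses this into a single chain of equalities in integral notation, but the underlying argument is identical.
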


\begin{proof}
Let $\mu \in \cD_\lambda^\beta$. Then
\[
\kappa_\lambda^\beta(\mu)(\chi_{[p]}) = \int_{\Galp} \chi_{[p]} \cdot d\kappa_\lambda^\beta(\mu) = \int_{\Galp}v_\lambda^\beta(\chi_{[p]}) \cdot d\mu = \int_{J_p^\beta}v_\lambda^\beta(\chi_{[p]}) \cdot d\mu = \int_{J_p^\beta}v_{\bj} \cdot d\mu = \kappa_{\bj}^\circ \circ r_\lambda(\mu),
\]
where we have used that $\mu$ has support on $J_p^\beta$,  and $\mu(v_{\bj}) = \mu(r_\lambda(v_{\bj}))$ since  $v_{\bj} \in V_\lambda$. 
\end{proof}

\subsection{Overconvergent evaluation maps}

Recall $K$ (hence $L_\beta$) acts on $\cD_\lambda$ via \S\ref{sec:*-action} by projection to $K_p \subset J_p$, giving a local system $\sD_\lambda$ on $S_K$ by \S\ref{sec:local systems}. 

\begin{lemma}
	The action of $L_\beta \subset K$ preserves $\cD_\lambda^\beta$.
\end{lemma}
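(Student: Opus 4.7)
The plan is to reduce the $L_\beta$-invariance of $\cD_\lambda^\beta$ to the set-theoretic identity $J_p^\beta \cdot \iota(\ell_p) = J_p^\beta$ for every $\ell \in L_\beta$, and then verify this directly from the defining twisted-conjugacy condition on $L_\beta$. Unwinding definitions, a distribution $\mu \in \cD_\lambda$ lies in $\cD_\lambda^\beta$ precisely when $\mu(f) = 0$ for every $f \in \cA_\lambda$ that vanishes on $J_p^\beta$. Since, by \S\ref{sec:*-action}, $(g * \mu)(f) = \mu(g^{-1} * f)$ with $(g^{-1} * f)(h) = f(h g^{-1})$, and $L_\beta$ acts via its projection $\iota(\ell_p) \in H(\Zp) \cap J_p$, it suffices to prove the map $f \mapsto [h \mapsto f(h\iota(\ell_p))]$ preserves $\cA_\lambda^\beta$; equivalently, that $J_p^\beta \cdot \iota(\ell_p) \subseteq J_p^\beta$. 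Applying the same inclusion to $\ell^{-1}$ then upgrades this to equality.

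The next step is to extract the crucial congruence satisfied by $\ell_p$. By definition, for $\ell \in L_{\pri}^{\beta_{\pri}}$ there exists $k \in J_{\pri}$ such that $u_{\pri}\,\iota(\ell)\,u_{\pri}^{-1} = t_{\pri}^{\beta_{\pri}}\,k\,t_{\pri}^{-\beta_{\pri}}$. A direct block computation, using $u_{\pri} = \smallmatrd{1}{w_n}{0}{1}$ and $t_{\pri}^{\beta_{\pri}} = \smallmatrd{\varpi_{\pri}^{\beta_{\pri}} I_n}{0}{0}{I_n}$, forces $k$ to be block-upper-triangular with diagonal blocks $(\ell_1,\ell_2)$ and off-diagonal block $\varpi_{\pri}^{-\beta_{\pri}}(w_n \ell_2 - \ell_1 w_n)$; integrality of the latter yields
\[
\ell_1 \equiv w_n \ell_2 w_n^{-1} \pmod{\pri^{\beta_{\pri}}}.
\]

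Finally, I combine this with the Iwahori-type decomposition of $J_p^\beta$. Any $j \in J_p^\beta$ factors as $\overline{n} h n$ with $\overline{n} \in \overline{N}_Q(\Zp)$, $h \in H(\Zp)$, and $n = \smallmatrd{1}{X}{0}{1} \in N_Q^\beta(\Zp)$, so $X \equiv w_n \pmod{p^\beta}$. Rewriting $j\,\iota(\ell_p) = \overline{n}\,(h\iota(\ell_p))\,(\iota(\ell_p)^{-1} n\, \iota(\ell_p))$, the conjugate $\iota(\ell_p)^{-1} n\, \iota(\ell_p) = \smallmatrd{1}{\ell_{p,1}^{-1} X \ell_{p,2}}{0}{1}$ has top-right block reducing modulo $p^\beta$ to $\ell_{p,1}^{-1} w_n \ell_{p,2} \equiv w_n$ by the congruence above, hence lies in $N_Q^\beta(\Zp)$. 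So $j \,\iota(\ell_p)$ still belongs to $\overline{N}_Q(\Zp) H(\Zp) N_Q^\beta(\Zp) \cap J_p = J_p^\beta$, as required. The only genuine content is the extraction of the congruence $\ell_1 \equiv w_n \ell_2 w_n^{-1} \pmod{p^\beta}$; everything else is block bookkeeping. The mild obstacle is this extraction, which amounts to recognising that the level $L_\beta$ was \emph{engineered} precisely so that $\iota(L_\beta \cap H(\Qp))$ normalises $N_Q^\beta(\Zp)$ modulo $p^\beta$, and hence preserves the Iwahori-type cell $J_p^\beta$.
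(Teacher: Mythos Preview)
Your proof is correct and follows essentially the same approach as the paper: both reduce to the congruence $\ell_1 \equiv w_n \ell_2 w_n \pmod{p^\beta}$ and then check that conjugation by $\iota(\ell_p)$ sends $\smallmatrd{1}{X}{0}{1}$ with $X \equiv w_n$ to $\smallmatrd{1}{\ell_1^{-1}X\ell_2}{0}{1}$ with $\ell_1^{-1}X\ell_2 \equiv w_n$. The paper works on the $\cA_\lambda^\beta$ side and cites \cite[Lem.~4.5]{BDGJW} for the congruence, whereas you extract it directly from the definition of $L_\beta$ and phrase things via the set-theoretic equality $J_p^\beta \cdot \iota(\ell_p) = J_p^\beta$; this is a cosmetic difference only.
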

\begin{proof}
	If $f \in \cA_\lambda^\beta$, and $\ell = (\ell_1,\ell_2) \in L_\beta$, then for any $\smallmatrd{1}{X}{0}{1} \in N_Q(\Zp)$, we have $(\ell * f)\smallmatrd{1}{X}{0}{1} = \langle \ell \rangle_H \cdot f\smallmatrd{1}{\ell_1^{-1}X\ell_2}{0}{1}$. By the proof of \cite[Lem.\ 4.5]{BDGJW}, we have $\ell_2 \equiv w_n\ell_1w_n \newmod{p^\beta}$; so $\ell_1^{-1}X\ell_2 \equiv w_n \newmod{p^\beta}$ if and only if $X \equiv \ell_2w_n\ell_2^{-1} \equiv w_n \newmod{p^\beta}$. Thus $\ell * f$ has support on $N_Q^\beta(\Zp)$ if $f$ does; so $L_\beta$ preserves $\cA_\lambda^\beta$, hence $\cD_\lambda^\beta$.
\end{proof}

To make Proposition \ref{prop:master branching law} useful, we need the following support condition.

\begin{lemma}\label{lem:support}
The map 
\[
\sEv_{\beta,\delta}^{\cD_\lambda,[*]} : \hc{t}(S_K,\sD_\lambda) \to (\cD_\lambda)_{\Gamma_{\beta,\delta}}
\]
has image in $(\cD_\lambda^\beta)_{\Gamma_{\beta,\delta}} \subset (\cD_\lambda)_{\Gamma_{\beta,\delta}}$.
\end{lemma}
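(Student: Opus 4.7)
The strategy is to isolate the role of the twist $\tau_\beta^*$ at the very start of the evaluation map. The remaining steps (pullback to $\Gamma_{\beta,\delta}\backslash\cX_H$, taking coinvariants, capping with $\theta_\delta$) are all induced by maps of local systems that commute with the inclusion $\cD_\lambda^\beta \hookrightarrow \cD_\lambda$, once we know that $\cD_\lambda^\beta$ is preserved by $L_\beta$ and hence defines a sublocal system of $\iota^*\sD_\lambda$ on $X_\beta$. This preservation is the previous lemma. So the whole claim reduces to showing that the map $\tau_\beta^*$ of local systems, given fibrewise by $m \mapsto u^{-1}t_p^\beta * m$, lands in $\iota^*\sD_\lambda^\beta$. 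In other words, I need to show that the $*$-action of $u^{-1}t_p^\beta \in \Delta_p$ on $\cD_\lambda$ has image in $\cD_\lambda^\beta$.

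By duality, this is equivalent to the statement that the adjoint map $f \mapsto t_p^{-\beta}u * f$ on $\cA_\lambda$ factors through the restriction $\cA_\lambda \to \cA_\lambda/\cA_\lambda^{\mathrm{out}}$, where $\cA_\lambda^{\mathrm{out}}$ denotes functions vanishing on $J_p^\beta$, equivalently (via the parahoric transformation law) on $N_Q^\beta(\Zp)$. The key computation is then the following: $u \in J_p$ acts by $(u*f)(g) = f(gu)$, so for $n = \smallmatrd{1}{X}{0}{1} \in N_Q(\Zp)$,
\[
(u*f)\smallmatrd{1}{X}{0}{1} = f\smallmatrd{1}{X+w_n}{0}{1}.
\]
Next, applying $t_p^{-\beta}$ and using $(t_{\pri}^{-1}*f)(n) = f(t_{\pri}nt_{\pri}^{-1})$, one computes
\[
(t_p^{-\beta}u * f)\smallmatrd{1}{X}{0}{1} = f\smallmatrd{1}{\varpi^\beta X + w_n}{0}{1}
\]
where $\varpi^\beta := \prod_{\pri|p}\varpi_{\pri}^{\beta_{\pri}}$. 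Since $\varpi^\beta X + w_n \equiv w_n \pmod{p^\beta}$, the argument lies in $N_Q^\beta(\Zp)$. Thus $(t_p^{-\beta}u*f)$ only sees $f|_{N_Q^\beta(\Zp)}$, proving the desired factorisation and hence the support claim for $\mu \mapsto u^{-1}t_p^\beta * \mu$.

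With this in hand, the evaluation map restricts to
\[
\hc{t}(S_K,\sD_\lambda) \xrightarrow{\tau_\beta^*\circ \iota_\beta^*} \hc{t}(X_\beta, \iota^*\sD_\lambda^\beta) \xrightarrow{c_\delta^*} \hc{t}(\Gamma_{\beta,\delta}\backslash\cX_H, c_\delta^*\iota^*\sD_\lambda^\beta) \longrightarrow (\cD_\lambda^\beta)_{\Gamma_{\beta,\delta}},
\]
which sits inside the corresponding sequence for $\cD_\lambda$. The only point requiring more than bookkeeping is the displayed fibrewise computation, which is elementary but conventionally delicate: one must track correctly that the $*$-action of $t_{\pri}^{-1}$ on $\cA_\lambda$ is conjugation by $t_{\pri}$ rather than $t_{\pri}^{-1}$, and that $u \in J_p$ so the $u$-twist is the genuine right translation. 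No genuine obstacle arises; the result is essentially forced by the compatibility between the matrix $u^{-1}t_p^\beta$ (tailored to the Friedberg--Jacquet test data at $p$ via Proposition \ref{prop:zeta p}) and the support condition defining $\cD_\lambda^\beta$.
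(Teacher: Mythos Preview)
Your proof is correct and follows the expected approach: the support condition is forced by the fibrewise computation showing that $t_p^{-\beta}u * f$, evaluated on $N_Q(\Zp)$, only samples $f$ at points of $N_Q^\beta(\Zp)$, so dually $u^{-1}t_p^\beta * \mu$ lands in $\cD_\lambda^\beta$; the remaining steps are functorial once one knows $L_\beta$ preserves $\cD_\lambda^\beta$ (the preceding lemma). The paper simply cites \cite[Lem.~12.4]{BDGJW} for this, and your explicit argument is exactly the kind of computation carried out there, so there is no meaningful difference in strategy.
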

\begin{proof}
Identical to \cite[Lem.\ 12.4]{BDGJW}.
\end{proof}

The following, and Lemma \ref{lem:D action}, allows us to use Proposition \ref{prop:ind of delta}.

\begin{lemma}\label{lem:kappa equivariant}
	The map $\kappa_\lambda^\beta : \cD_\lambda^\beta \to \cD(\Galp,L)$ is a map of $L_\beta$-modules.
\end{lemma}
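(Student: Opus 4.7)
The plan is to reduce $L_\beta$-equivariance of $\kappa_\lambda^\beta$, by duality, to $L_\beta$-equivariance of $v_\lambda^\beta : \cA(\sU_\beta, L) \to \cA_\lambda^\beta$ for the two given $*$-actions. Fixing $\ell = (\ell_1,\ell_2) \in L_\beta$ and $f \in \cA(\sU_\beta,L)$, both $v_\lambda^\beta(\ell * f)$ and $\ell * v_\lambda^\beta(f)$ are elements of $\cA_\lambda^\beta$ satisfying the parahoric transformation law \eqref{eq:parahoric transform}, so it suffices to verify equality on elements $n = \smallmatrd{1}{X}{0}{1} \in N_Q^\beta(\Zp)$.

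For the right-hand side, I would write $n\ell_p = \smallmatrd{\ell_{1,p}}{0}{0}{\ell_{2,p}} \smallmatrd{1}{\ell_{1,p}^{-1}X\ell_{2,p}}{0}{1}$, noting that the congruence $\ell_2 \equiv w_n\ell_1 w_n \pmod{p^\beta}$ (already recalled from \cite[Lem.\ 4.5]{BDGJW} in the preceding paragraphs) places $\smallmatrd{1}{\ell_{1,p}^{-1}X\ell_{2,p}}{0}{1}$ in $N_Q^\beta(\Zp)$. The parahoric law together with \eqref{eq:v_lambda} then produces an expression whose core factor is $\langle\smallmatrd{X\ell_{2,p}}{0}{0}{\ell_{2,p}}\rangle_\lambda \cdot v_\lambda^H = \langle\smallmatrd{X}{0}{0}{1}\rangle_\lambda \cdot \big(\langle\smallmatrd{\ell_{2,p}}{0}{0}{\ell_{2,p}}\rangle_\lambda \cdot v_\lambda^H\big)$. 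Since $v_\lambda^H$ lies in the $w_p^{\usw}\circ\det$ line for the diagonal copy of $G_n$, the inner action collapses to the scalar $w_p^{\usw}(\det\ell_{2,p})$.

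For the left-hand side, unravelling the $*$-action on $\cA(\Galp,L)$ directly produces the factor $\eta_{[p]}(\det\ell_2)$, while the argument $\det(\ell_1^{-1}\ell_2 w_n X)$ inside $f$ agrees with the right-hand side's $\det(w_n\ell_1^{-1}X\ell_2)$ as $\det$ is a homomorphism. Matching the two sides therefore reduces to the identity $w_p^{\usw}(\det\ell_{2,p}) = \eta_{[p]}(\det\ell_2)$. Unwinding the definition of $\eta_{[p]}$ from \S\ref{sec:p-adic Hecke} (recall $\ell_2$ is finite, so $w_\infty^{\usw}$ does not enter), this becomes $\eta(\det\ell_2) = 1$, which holds by the very choice of $\m$ — equivalently, by the triviality of $\eta$ on $L_\beta$ exploited in the proof of Lemma \ref{lem:Vjw to Vvarphi} via Lemma \ref{lem:1 mod p}.

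The only genuine obstacle is bookkeeping: one must align the $\eta_{[p]}$-twist built into the $H(\A)$-action on $\cA(\Galp,L)$ with the character $w_p^{\usw}\circ\det$ through which the diagonal copy of $G_n$ acts on $v_\lambda^H$. The construction is designed precisely so that these twists match — $v_\lambda^H$ generates the $w_p^{\usw}\circ\det$ line by the lemma preceding Proposition \ref{prop:explicit v_j}, and $\m$ was chosen so that $\eta|_{L_\beta} = 1$.
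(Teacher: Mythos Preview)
Your argument is correct and follows essentially the same route as the paper: reduce by duality to $L_\beta$-equivariance of $v_\lambda^\beta$, compute both sides on $N_Q^\beta(\Zp)$ via the parahoric law, collapse $\langle\smallmatrd{\ell_{2,p}}{}{}{\ell_{2,p}}\rangle_\lambda$ on $v_\lambda^H$ to the scalar $w_p^{\usw}(\det\ell_{2,p})$, and match this against $\eta_{[p]}(\det\ell_2)$ using $\eta(\det\ell_2)=1$.

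There is one small omission. When you write ``the argument $\det(\ell_1^{-1}\ell_2 w_n X)$ inside $f$ agrees with the right-hand side's $\det(w_n\ell_1^{-1}X\ell_2)$ as $\det$ is a homomorphism'', you are mixing the adelic element $\ell_1^{-1}\ell_2 \in H(\A_f)$ with the purely $p$-adic $w_nX$. The action of $(\ell_1,\ell_2)$ on $\cA(\Galp,L)$ is via the image of $\det(\ell_1^{-1}\ell_2) \in \A_{F,f}^\times$ in $\Galp \cong \cl_F^+(p^\infty)$, whereas on the other side you have $\jmath(\det(\ell_{1,p}^{-1}\ell_{2,p}))$ coming from $(\cO_F\otimes\Zp)^\times$. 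These agree only because the prime-to-$p$ part of $\det(\ell_1^{-1}\ell_2)$ lies in $\widehat{\cO}_F^{(p),\times} \subset U(p^\infty)$ (by Lemma~\ref{lem:1 mod p}) and hence maps trivially to $\Galp$. Multiplicativity of $\det$ alone does not give this; the paper singles out precisely this point. Once you insert this observation, your proof is complete and matches the paper's.
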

\begin{proof}
It suffices to prove that $v_\lambda^\beta : \cA(\sU_\beta,L) \to \cA_\lambda^\beta$ is a map of $L_\beta$-modules.  This is almost identical to \cite[Prop.\ 6.11(ii)]{BDW20}. We observe that if $f \in \cA(\Galp,L)$ and $\ell = (\ell_1,\ell_2) \in L_\beta$, then $(\ell * f)(x) = w_p(\det(\ell_{2,p}))^{\usw}f(\det(\ell_{1,p}^{-1}\ell_{2,p}))$, since $\eta(\det(\ell_2)) = 1$ (as in Lemma \ref{lem:coinvariants}) and the prime-to-$p$ part of $\ell_1^{-1}\ell_2$ lies in $\sU(p^\infty)$, i.e.\ acts trivially on $\cl_F^+(p^\infty)$.  The rest of the proof proceeds exactly as \emph{op.\ cit}.
\end{proof}

\begin{proposition}
	For any $\delta \in H(\A_f)$, the map
\[
\sEv_{\beta,[\delta]}^{\dagger} : \hc{t}(S_K,\sD_\lambda^\vee(L)) \xrightarrow{\ \sEv_{\beta,\delta}^{\cD_\lambda,[*]} \ } \big(\cD_{\lambda}^\beta(L)\big)_{\Gamma_{\beta,\delta}} \xrightarrow{\kappa_{\lambda}^\beta} \cD(\Galp,L) \xrightarrow{\ \cdot \delta \ } \cD(\Galp,L)
\]
is well-defined and depends only on the class $[\delta] \in \pi_0(X_\beta)$.
\end{proposition}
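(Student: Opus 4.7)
The plan is to derive this as a formal consequence of Proposition \ref{prop:ind of delta} applied to $M = \cD_\lambda^\beta(L)$ with the $*$-action, $N = \cD(\Galp,L)$ with its $H(\A)$-action, and $\kappa = \kappa_\lambda^\beta$. The only real work is to assemble the hypotheses, all of which have been individually established in the preceding lemmas; the present proposition is then obtained almost for free.

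First I would verify that the three-step composition defining $\sEv_{\beta,[\delta]}^{\dagger}$ actually makes sense. The Hecke-theoretic evaluation $\sEv_{\beta,\delta}^{\cD_\lambda,[*]}$ takes values in $(\cD_\lambda)_{\Gamma_{\beta,\delta}}$, whereas $\kappa_\lambda^\beta$ is only defined on the subspace $\cD_\lambda^\beta \subset \cD_\lambda$. Lemma \ref{lem:support} resolves this: it says that the image lies in $(\cD_\lambda^\beta)_{\Gamma_{\beta,\delta}}$, so the post-composition with $\kappa_\lambda^\beta$ is legitimate.

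Next I would check the two structural hypotheses of Proposition \ref{prop:ind of delta}. The $H(\A)$-action on $\cD(\Galp,L)$ arises by dualizing the $*$-action on $\cA(\Galp,L)$, and Lemma \ref{lem:D action}(i) records that $H(\Q)$ and $H(\R)^\circ$ act trivially. Lemma \ref{lem:kappa equivariant} records that $\kappa_\lambda^\beta$ is a map of $L_\beta$-modules. These are precisely the two conditions required in the hypothesis of Proposition \ref{prop:ind of delta}. Plugging everything in, that proposition delivers both the well-definedness of $\delta \cdot [\kappa_\lambda^\beta \circ \sEv_{\beta,\delta}^{\cD_\lambda,[*]}]$ and its independence of the chosen representative $\delta$ of $[\delta] \in \pi_0(X_\beta)$, which is exactly the statement to be proved.

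Since every nontrivial ingredient has been packaged into the preceding lemmas, there is no serious obstacle here; the hardest step is really a cosmetic one, namely to be careful that the support condition of Lemma \ref{lem:support} is invoked \emph{before} applying $\kappa_\lambda^\beta$, so that the resulting map is unambiguously defined at the level of $\Gamma_{\beta,\delta}$-coinvariants.
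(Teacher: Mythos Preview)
Your proposal is correct and follows exactly the paper's approach: the paper's proof is the one-line ``Immediate by combining Proposition \ref{prop:ind of delta} with Lemmas \ref{lem:D action}(i) and \ref{lem:kappa equivariant}'', and you have spelled out precisely how these ingredients fit together, additionally making explicit the role of Lemma \ref{lem:support} in ensuring the first arrow lands in $(\cD_\lambda^\beta)_{\Gamma_{\beta,\delta}}$ so that $\kappa_\lambda^\beta$ can be applied.
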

\begin{proof}
	Immediate by combining Proposition \ref{prop:ind of delta} with Lemmas \ref{lem:D action}(i) and \ref{lem:kappa equivariant}.
\end{proof}

\begin{definition}
	The \emph{overconvergent evaluation map of level $\beta$} is the map
		\[
			\sEv_{\beta}^{\dagger} \defeq \sum_{\delta \in \pi_0(X_\beta)} \sEv_{\beta,[\delta]}^{\dagger} : \hc{t}(S_K,\sD_\lambda^\vee(L)) \longrightarrow \cD(\Galp,L).
		\]
\end{definition}

\subsection{Interpolation of classical evaluation maps}
The map $r_\lambda$ from \eqref{eq:r_lambda} is by definition a $\Delta_p$-module map when $V_\lambda^\vee$ is given the $*$-action, hence induces a $U_p^*$-equivariant map
\begin{equation}\label{eq:specialisation}
	r_\lambda : \hc{t}(S_K,\sD_\lambda) \longrightarrow \hc{t}(S_K,\sV_\lambda^\vee).
\end{equation}

\begin{proposition}\label{prop:evaluations interpolate}
	For any $\chi$ of conductor dividing $p^\beta$, we have a commutative diagram
	\[
	\xymatrix@C=25mm{
	\hc{t}(S_K,\sD_\lambda(L)) \ar[r]^-{\sEvs_{\beta}^\dagger} \ar[d]^{r_\lambda} & \cD(\Galp,L)\ar[d]^{\mu \mapsto \mu(\chi_{[p]})} \\
	\hc{t}(S_K,\sV_\lambda^\vee(L)) \ar[r]^-{\sEvs_{\chi}^{[*]}} & L.	
	}
\]
\end{proposition}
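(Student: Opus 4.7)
The plan is to prove the commutativity by decomposing both sides over the connected components $\delta \in \pi_0(X_\beta)$ and performing a direct diagram chase, the work essentially having been packaged into Proposition~\ref{prop:master branching law}, Lemma~\ref{lem:support}, Lemma~\ref{lem:pushforward} and Lemma~\ref{lem:D action}. Fix a class $\phi \in \hc{t}(S_K,\sD_\lambda(L))$ and an element $\delta \in H(\A_f)$; it suffices to show that
\[
\sEvs_{\beta,[\delta]}^\dagger(\phi)(\chi_{[p]}) \;=\; \sEvs_{\chi,[\delta]}^{[*]}(r_\lambda(\phi))
\]
as elements of $V_{\chi,[p]}^H(L) \cong L(\chi)$, and then sum over $[\delta] \in \pi_0(X_\beta)$.

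First, by Lemma~\ref{lem:support}, the element $\mu \defeq \sEvs_{\beta,\delta}^{\cD_\lambda,[*]}(\phi)$ lies in $(\cD_\lambda^\beta)_{\Gamma_{\beta,\delta}}$, so Proposition~\ref{prop:master branching law} applies to it and yields
\[
\kappa_\lambda^\beta(\mu)(\chi_{[p]}) \;=\; \kappa_{\bj}^\circ \circ r_\lambda(\mu).
\]
Next, since $r_\lambda : \cD_\lambda \to V_\lambda^\vee$ is a $\Delta_p$-module map for the $*$-actions on both sides (by design, see \S\ref{sec:*-action}), Lemma~\ref{lem:pushforward} gives the compatibility
\[
r_\lambda \circ \sEvs_{\beta,\delta}^{\cD_\lambda,[*]}(\phi) \;=\; \sEvs_{\beta,\delta}^{V_\lambda^\vee,[*]}(r_\lambda(\phi)) \quad\text{in}\quad (V_\lambda^\vee(L))_{\Gamma_{\beta,\delta}}.
\]
Combining the two identities gives $\kappa_\lambda^\beta(\mu)(\chi_{[p]}) = \kappa_{\bj}^\circ \circ \sEvs_{\beta,\delta}^{V_\lambda^\vee,[*]}(r_\lambda(\phi))$.

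Finally, one must account for the $\cdot \delta$ twists appearing in the definitions of both $\sEvs_{\beta,[\delta]}^\dagger$ and $\sEvs_{\chi,[\delta]}^{[*]}$. By Lemma~\ref{lem:D action}(ii), the evaluation map $\cD(\Galp,L) \to V_{\chi,[p]}^H$, $\nu \mapsto \nu(\chi_{[p]})$, is $H(\A)$-equivariant, so it intertwines the action of $\delta$ on $\cD(\Galp,L)$ with the action of $\delta$ on $V_{\chi,[p]}^H$. Thus
\[
\sEvs_{\beta,[\delta]}^\dagger(\phi)(\chi_{[p]}) \;=\; \delta \cdot \big[\kappa_\lambda^\beta(\mu)(\chi_{[p]})\big] \;=\; \delta \cdot \big[\kappa_{\bj}^\circ \circ \sEvs_{\beta,\delta}^{V_\lambda^\vee,[*]}(r_\lambda(\phi))\big],
\]
where the final expression is the image of $\kappa_{\bj}^\circ \circ \sEvs_{\beta,\delta}^{V_\lambda^\vee,[*]}(r_\lambda(\phi)) \in V_{\bj,-\usw-\bj}^H$ under the identification of Lemma~\ref{lem:Vjw to Vvarphi}, followed by the $\delta$-action on $V_{\chi,[p]}^H$. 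Comparing with the definition of $\sEvs_{\chi,[\delta]}^{[*]}$ (in which $\kappa_{\bj}$ is precisely $\kappa_{\bj}^\circ$ up to the chosen basis $u_{\bj}$ of $V_{\bj,-\usw-\bj}^H$), this coincides with $\sEvs_{\chi,[\delta]}^{[*]}(r_\lambda(\phi))$. Summing over $[\delta] \in \pi_0(X_\beta)$ completes the proof.

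There is no real obstacle here — the genuinely substantive content (the interpolation of $\kappa_{\bj}^\circ$ by $\kappa_\lambda^\beta$, the support condition on evaluation maps, and the equivariance of the classical evaluation construction) has already been established. The mildly delicate point is the bookkeeping in the last paragraph: one must check that the $\cdot\delta$-twists on the two sides match up after identifying $V_{\bj,-\usw-\bj}^H$ with $V_{\chi,[p]}^H$, which rests on the equivariance statement of Lemma~\ref{lem:D action}(ii) and is the reason the whole construction was arranged this way.
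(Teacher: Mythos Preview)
Your proof is correct and essentially identical to the paper's own argument. The paper presents it as a three-square diagram chase---left square by Lemma~\ref{lem:pushforward} (with well-definedness from Lemma~\ref{lem:support}), middle square by Proposition~\ref{prop:master branching law}, right square by Lemma~\ref{lem:D action}(ii)---which is exactly the sequence of equalities you unpack element-by-element and then sum over $[\delta]$.
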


\begin{proof}
It suffices to check every square in 
	\[
\xymatrix@C=15mm{
	\hc{t}(S_K,\sD_\lambda(L)) \ar[r]^-{\sEvs_{\beta,\delta}^{\cD_\lambda,[*]}} \ar[d]^{r_\lambda} & (\cD_\lambda^\beta(L))_{\Gamma_{\beta,\delta}} \ar[r]^-{\kappa_{\lambda}}\ar[d]^{r_\lambda} &\cD(\Galp,L)\ar[d]^{\mu \mapsto \mu(\chi_{[p]})}\ar[r]^{\delta *} &\cD(\Galp,L)\ar[d]^{\mu \mapsto \mu(\chi_{[p]})} \\
	\hc{t}(S_K,\sV_\lambda^\vee(L)) \ar[r]^-{\sEvs_{\beta,\delta}^{V_\lambda^\vee,[*]}} &  (V_\lambda^\vee(L))_{\Gamma_{\beta,\delta}} \ar[r]^-{\kappa_{\bj}^\circ} & L \ar[r]^{\delta * } & L
}
\]
commutes, since the top row is $\sEv^\dagger_\beta$ and the bottom row $\sE_{\chi}^{[*]}$. The left-most square commutes by Lemma \ref{lem:pushforward} (noting the top arrow is well-defined by Lemma \ref{lem:support}). The middle square commutes by Proposition \ref{prop:master branching law}. The right-most square commutes by Lemma \ref{lem:D action}(ii), noting that $\delta$ acts on the bottom row via the identification $L \cong V_{\chi,[p]}^H(L)$.
\end{proof}

\section{The $p$-adic $L$-function}

\subsection{Construction}\label{sec:construction}
Let now $\pi$ be a RASCAR of weight $\lambda$, spherical at all $\pri|p$, and let $\tilde\pi = (\pi, \{\alpha_{\pri}\}_{\pri}\}$ be a regular spin $Q$-refinement as in \S\ref{sec:local at p}. Let $\varphi_{f}^{\mathrm{FJ},\alpha} \in \pi_f$ and $[\omega]$ be chosen as in Test Data \ref{test-data}, and let 
\begin{equation}\label{eq:phi}
	\phi_{\tilde\pi} \defeq \Theta_{[\omega]}^{K,i_p}(\varphi_f^{\mathrm{FJ},\alpha}) \in \hc{t}(S_K,\sV_\lambda^\vee(\Qpbar)).
\end{equation}
There exists some finite extension $L/\Qp$ such that $\phi_{\tilde\pi}$ is defined over $L$, rather than $\Qpbar$. The map $\Theta_{[\omega]}^{K,i_p}$ is Hecke-equivariant when we give the right-hand side the $\cdot$-action, hence 
\[
U_{\pri}^\cdot \phi_{\tilde\pi}  = \alpha_{\pri}\phi_{\tilde\pi}, \qquad U_{\pri}^*\phi_{\tilde\pi} = \lambda(t_{\pri}) \alpha_{\pri} \phi_{\tilde\pi},
\]
the second via \eqref{eq:dot vs *}. Let $\alpha_{\pri}^\circ \defeq \lambda(t_{\pri})\alpha_{\pri}$, the $U_{\pri}^*$-eigenvalue.

\begin{definition}\label{def:non-critical}
	Recall the map $r_\lambda$ from \eqref{eq:specialisation}. We say  $\tilde\pi$ is \emph{non-$Q$-critical} if $r_{\lambda}$ becomes an isomorphism after restricting to the $\tilde\pi$-generalised eigenspaces for the Hecke algebra (precisely, the algebra $\cH$ from \cite[Def.\ 2.10]{BDW20}).	We say $\tilde\pi$ has \emph{non-$Q$-critical slope} if 
	\[
		e_{\pri} \cdot v_p(\alpha_{\pri}^\circ) < \mathrm{min}_{\sigma \in \Sigma(\pri)}(1 + \lambda_{\sigma,n} - \lambda_{\sigma,n+1}) \qquad \text{ for all $\pri|p$},
	\]
	where $\Sigma(\pri)$ is the set of $\sigma : F \hookrightarrow \C \isorightarrow \overline{\Q}_p$ that extend to $F_{\pri} \hookrightarrow \Qpbar$, and $e_{\pri}$ is the ramification index of $\pri|p$. 
	(If $\tilde\pi$ is \emph{ordinary}  -- i.e.\ $v_p(\alpha_{\pri}^\circ) = 0$ for all $\pri$ -- then it has non-$Q$-critical slope.)
\end{definition} 

The following is \cite[Thm.\ 4.4]{BW20} (see also \cite[Thm.\ 3.17]{BDW20}). 
\begin{theorem}
	If $\tilde\pi$ has non-$Q$-critical slope, then it is non-$Q$-critical.
\end{theorem}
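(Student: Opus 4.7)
The plan is to prove this via the standard control theorem for overconvergent cohomology in the $Q$-parahoric setting, following \cite{BW20}. Consider the short exact sequence of $\Delta_p$-modules (for the $*$-action),
\[
0 \longrightarrow \mathscr{N}_\lambda \longrightarrow \cD_\lambda \xrightarrow{\ r_\lambda\ } V_\lambda^\vee \longrightarrow 0,
\]
where $\mathscr{N}_\lambda \defeq \ker(r_\lambda)$. Taking compactly supported cohomology on $S_K$ yields a long exact sequence; since $U_{\pri}^*$ acts on the $\tilde\pi$-generalised eigenspace for $\cH$ with slope exactly $h_{\pri} \defeq v_p(\alpha_{\pri}^\circ)$, it suffices to show that for every $\pri|p$ the $U_{\pri}^*$-slopes on $\hc{i}(S_K, \mathscr{N}_\lambda)$ (for $i = t-1, t$) are all strictly greater than $h_{\pri}$. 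Then the $\tilde\pi$-parts of kernel and cokernel cohomology of $\mathscr{N}_\lambda$ vanish, and $r_\lambda$ induces the desired Hecke-equivariant isomorphism on $\tilde\pi$-generalised eigenspaces.

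The technical heart is then a parahoric, locally analytic BGG-style filtration on $\mathscr{N}_\lambda$. The description of $\cA_\lambda$ as a locally analytic induction from $\overline{Q}$, together with $V_\lambda \subset \cA_\lambda$ being the algebraic induction (cf.\ \eqref{eq:double induction}), realises $V_\lambda$ as the top piece of a filtration; dualising, $\mathscr{N}_\lambda$ inherits a finite filtration whose graded pieces are, up to explicit twists, distribution modules attached to non-trivial minimal-length coset representatives $w \in W^Q$ for $W_H \backslash W$, with $\lambda$ replaced by the dotted weight $w\cdot \lambda \defeq w(\lambda+\rho) - \rho$. On each such piece the $*$-action of $t_{\pri}$ multiplies elements by a scalar whose valuation equals that of $(w\cdot\lambda - \lambda)(t_{\pri})$; a direct combinatorial check, using that $t_{\pri} = \iota(\varpi_{\pri}I_n, I_n)$, shows the minimum over non-trivial $w \in W^Q$ is attained at the simple reflection $s_n$ crossing the wall defining $Q$, and evaluates at a given $\pri|p$ to $e_{\pri}^{-1}\min_{\sigma \in \Sigma(\pri)}(1 + \lambda_{\sigma,n} - \lambda_{\sigma,n+1})$. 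Other $w$'s contribute strictly more, so this value controls the slope of $U_{\pri}^*$ on the whole kernel cohomology.

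Combining with the non-$Q$-critical slope hypothesis $e_{\pri} h_{\pri} < \min_{\sigma}(1 + \lambda_{\sigma,n} - \lambda_{\sigma,n+1})$ gives precisely the slope separation required in paragraph one, and the conclusion follows. The genuine obstacle is the construction of the parahoric BGG filtration and the precise identification of its graded pieces together with the $*$-action of $t_{\pri}$ on each: this is the content of \cite[Thm.\ 4.4]{BW20} (and \cite[Thm.\ 3.17]{BDW20}), and rather than reprove it the proof should simply cite those results. Once they are in hand, the long exact sequence argument and slope comparison are formal.
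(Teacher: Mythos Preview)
Your proposal is correct and aligns with the paper's treatment: the paper gives no proof at all, simply stating the theorem as a citation of \cite[Thm.\ 4.4]{BW20} (with \cite[Thm.\ 3.17]{BDW20} as a cross-reference). Your sketch accurately describes the strategy behind those cited results, and you rightly conclude that the appropriate course is to invoke them directly rather than reprove the parahoric BGG filtration and slope bounds.
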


If $\tilde\pi$ is non-$Q$-critical, then there exists a unique Hecke eigenclass $\Phi_{\tilde\pi} \in \hc{t}(S_K,\sD_\lambda)$ with $r_\lambda(\Phi_{\tilde\pi}) = \phi_{\tilde\pi}$. If $\beta \in \Z_{\geq 1}^{\pri|p}$, then let $(\alpha_p^\circ)^\beta = \prod_{\pri|p}(\alpha_{\pri}^\circ)^{\beta_{\pri}}$.

\begin{definition}\label{def:p-adic L-function}
	Let $\beta\in \Z_{\geq 1}^{\pri|p}$.  The \emph{$p$-adic $L$-function attached to $\tilde\pi$ and $i_p$} is 
	\[
		L_p^{i_p}(\tilde\pi) \defeq (\alpha_{p}^\circ)^{-\beta}\sEv_\beta^\dagger(\Phi_{\tilde\pi}) \in \cD(\Galp,L).
	\]
	By Proposition \ref{prop:evaluations changing beta} (with $\bullet = *$), this is independent of $\beta$. Dependence on $i_p$ is \eqref{eq:phi} (via \eqref{eq:Theta p}).
\end{definition}

\subsection{Growth conditions}
The following is an adaptation of \cite[Def.\ 2.14]{Loe14} for our setting. For $\bm = (m_{\pri}) \in \Z_{\geq 1}^{\pri|p}$, let $p^{\bm} = \prod \pri^{m_{\pri}}$. We have an exact sequence
\[
	0 \to  \sU_{\bm} \to \Galp \to \cl_F^+(p^{\bm}) \to 0,
\]
where $\sU_{\bm}$ is the image of $1+p^{\bm}\cO_F\otimes\Zp$ under the natural map $(\cO_F\otimes\Zp)^\times \to \Galp$ from \eqref{eq:cl SES}. Let 
\[
	\cA_{\bm}(\Galp,L) \defeq \{f \in \cA(\Galp,L) : f\text{ is analytic on every translate of $\sU_{\bm}$}\}.
\]

Then $\cA(\Galp,L) = \varinjlim_{\bm} \cA_{\bm}(\Galp,L)$ is the union of all the $\cA_{\bm}$'s. Moreover each $\cA_{\bm}$ is a Banach $L$-space with respect to a discretely valued norm $||\cdot||_{\bm}$. Dualising gives a family of norms 
\begin{align}
	||\mu||_{\bm} &\defeq \mathrm{sup}_{f\in\cA_{\bm}(\Galp,L)}\tfrac{|\mu(f)|}{||f||_{\bm}}\notag\\
	&= \mathrm{sup}_{||f||_{\bm} \leqslant 1}|\mu(f)|\label{eq:sup norm}
\end{align}
on $\cD(\Galp,L)$, which thus obtains the structure of a Fr\'{e}chet module.

\begin{definition} \label{def:admissible}
	Let $\bh = (h_{\pri}) \in \Q_{\geqslant 0}^{\pri|p}$. We say $\mu \in \cD(\Galp,L)$ is \emph{admissible of growth $\bh$} if there exists $C \geqslant 0$ such that for each $\bm \in \Z_{\geqslant 1}^{\pri|p},$ we have $||\mu||_{\bm} \leqslant p^{\bh\bm }C,$ where $\bh\bm = (h_{\pri}m_{\pri})_{\pri}$.
\end{definition}

Note that if $\mu$ is admissible of growth $(0)_{\pri|p}$, then it defines a bounded measure. Indeed, if $X \subset \Galp$ is open compact, define its volume to be $\mu(1_X)$, where $1_X$ is the indicator function. For $\bm$ such that $1_X \in \cA_{\bm}(\Galp,L)$, we see $|\mu(1_X)| = \tfrac{|\mu(1_X)|}{||1_X||_{\bm}} \leq ||\mu||_{\bm} \leq C$, so $\mu$ is bounded.

\begin{proposition}\label{prop:admissible}
	Let $\Phi \in \hc{t}(S_K,\sD_\lambda)$ be a Hecke eigenclass with $U_{\pri}^* \Phi = \alpha_{\pri}^\circ\Phi$ for all $\pri$.  The distribution $\sEv_\beta^\dagger(\Phi) \in \cD(\Galp,L)$ is admissible of growth $\bh_p \defeq (v_p(\alpha_{\pri}^\circ))_{\pri|p}$.
\end{proposition}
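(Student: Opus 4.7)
The strategy exploits the $\beta$-independence of $(\alpha_p^\circ)^{-\beta}\sEv_\beta^\dagger(\Phi)$ afforded by Proposition \ref{prop:evaluations changing beta} (applied with $M=\cD_\lambda$, $\bullet=*$, $\kappa=\kappa_\lambda^\beta$) together with the hypothesis $U_\pri^*\Phi = \alpha_\pri^\circ\Phi$. This yields the level-change relation $\sEv_{\beta'}^\dagger(\Phi) = (\alpha_p^\circ)^{\beta'-\beta}\sEv_\beta^\dagger(\Phi)$ for any $\beta' \geq \beta$ componentwise. First I would use this to reduce to a single level: for a given $\bm \in \Z_{\geq 1}^{\pri|p}$, taking $\beta' = \max(\beta,\bm)$ componentwise gives
\[
\|\sEv_\beta^\dagger(\Phi)\|_{\bm} = p^{\bh_p(\beta'-\beta)} \cdot \|\sEv_{\beta'}^\dagger(\Phi)\|_{\bm}.
\]
The admissibility bound $\|\sEv_\beta^\dagger(\Phi)\|_{\bm} \leq p^{\bh_p\bm}C$ would then follow from the key estimate $\|\sEv_{\bm}^\dagger(\Phi)\|_{\bm} \leq C_0$, uniformly in $\bm$, with $C = p^{-\bh_p\beta}C_0$.

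For the key estimate, unwinding Definition \ref{def:evaluation map} through Proposition \ref{prop:master branching law}, for $f \in \cA_{\bm}(\Galp,L)$ the pairing expands as
\[
\sEv_{\bm}^\dagger(\Phi)(f) = \sum_{[\delta] \in \pi_0(X_{\bm})} \sEv_{\bm,\delta}^{\cD_\lambda,[*]}(\Phi)\bigl(v_\lambda^{\bm}(\delta^{-1} * f)\bigr).
\]
I would represent $\Phi$ by a cocycle $\tilde\Phi$ with values in $\cD_\lambda$ of bounded distribution norm (independent of $\bm$, using finite-dimensionality of the Hecke-isotypic component at each cohomological degree), and use that the twist $\tau_{\bm}^*$ inserts an action of $u^{-1}t_p^{\bm}$ in the $*$-action. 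Thus the pairing reduces to evaluating $\tilde\Phi_\delta$ on the test function $(t_p^{-\bm}u)*v_\lambda^{\bm}(\delta^{-1}*f) \in \cA_\lambda$, and it suffices to prove that the operator
\[
f \longmapsto (t_p^{-\bm}u) * v_\lambda^{\bm}(\delta^{-1}*f)
\]
has operator norm from $\cA_{\bm}(\Galp,L)$ to $\cA_\lambda$ bounded uniformly in $\bm$ and $\delta$.

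This final operator-norm bound is the main obstacle, and I expect it to follow from an explicit calculation using the formula \eqref{eq:v_lambda}: $v_\lambda^{\bm}(h)$ is supported on $J_p^{\bm} = \overline{N}_Q(\Zp)H(\Zp)N_Q^{\bm}(\Zp)$ with values determined by $h$ restricted to $\sU_{\bm}$, and conjugation by $t_p^{-\bm}$ rescales $N_Q^{\bm}(\Zp)$ onto the full $N_Q(\Zp)$ (while contracting $\overline{N}_Q(\Zp)$ into $\overline{N}_Q(p^{\bm}\Zp)$). The analyticity radius of $h$ on $\sU_{\bm}$ is then matched exactly to the standard analyticity required for membership in $\cA_\lambda$, with the $\delta^{-1}$ and $u$ factors contributing only bounded distortion since the $\delta$-action factors through the finite group $\pi_0(X_{\bm})$, which stabilises for $\bm$ large. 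This matching of analyticity radii is the source of the growth rate $\bh_p$, and is a parahoric analogue of the distribution-theoretic compactness arguments in \cite[\S7]{BDW20} and the Banach-space bounds in \cite[\S2]{Loe14}, adapted to the $\GL_n \times \GL_n$ Levi.
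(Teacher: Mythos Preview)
Your overall strategy is correct and is essentially the same as the paper's (which in turn defers to \cite[Prop.~6.20]{BDW20} and \cite[Prop.~3.11]{BDJ17}): use the eigenclass relation and Proposition~\ref{prop:evaluations changing beta} to shift to the level matching $\bm$, then bound $\|\sEv_{\bm}^\dagger(\Phi)\|_{\bm}$ uniformly by unwinding $v_\lambda^{\bm}$ and tracking how the twist $t_p^{-\bm}$ rescales $N_Q^{\bm}(\Zp)$ back to $N_Q(\Zp)$. Two points deserve correction.

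First, a minor technical point: the paper uses $\beta_{\bm} = (m_\pri e_\pri)_{\pri|p}$ rather than $\bm$ itself as the level to pass to, reflecting that analyticity on $\sU_{\bm}$ is measured $p$-adically while the support condition $N_Q^\beta(\Zp)$ is $\pri$-adic; the ramification index mediates between them. Your argument goes through with this adjustment.

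Second, and more substantively, your justification for why the sum over $[\delta]\in\pi_0(X_{\bm})$ remains bounded is wrong. The group $\pi_0(X_{\bm}) = \cl_F^+(p^{\bm}\m)\times\cl_F^+(\m)$ does \emph{not} stabilise as $\bm$ grows; its order goes to infinity. The correct mechanism is that the summands have essentially disjoint support: by Lemma~\ref{lem:support} the term indexed by $\delta$ lands in $\cD(\sU_{\bm},L)$ before the $\delta$-translation, and after translating, it is supported on the coset of $\sU_{\bm}$ in $\Galp$ determined by $[\delta]$ (this is exactly the paper's ``translate of $\sU_{\bm}$ by $\delta$''). Since the norm \eqref{eq:sup norm} is a sup-norm and distinct $[\delta]$ give distinct cosets (up to a bounded redundancy coming from the fixed $\m$-part), the norm of the sum is bounded by the maximum of the individual norms, not their sum. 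This is what makes the estimate uniform despite the growing index set.
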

\begin{proof}
	This is a higher-variable version of \cite[Prop.\ 6.20]{BDW20} and \cite[Prop.\ 3.11]{BDJ17}, and it is proved essentially identically. We indicate changes of notation. One defines $\beta_{\bm} = (m_{\pri}e_{\pri})_{\pri|p}$, and replaces all instances of $m$  \emph{op.\ cit}. (a positive integer) with $\bm$. After (6.21) \emph{op.\ cit}., $g$ should instead be analytic on $N_Q^\beta(\Zp)$, and $\xi$ there replaced with $u^{-1}$ here. The space $\Galp[\mathrm{pr}_{\beta_m}([\delta])]$ there is the translate of $\sU_{\bm}$ by $\delta$ here. The rest of the proof is identical.
\end{proof}

\subsection{Proof of Theorem \ref{thm:intro}}\label{sec:proof theorem A}
Finally we assemble our results and prove Theorem \ref{thm:intro} from the introduction. We have constructed the distribution $L_p^{i_p}(\tilde\pi)$ in Definition \ref{def:p-adic L-function}. It has the required growth property by Proposition \ref{prop:admissible}. For the interpolation, if $\chi$ has conductor dividing $p^\beta$, we have
\[
	\int_{\Galp} \chi_{[p]} \cdot dL_p^{i_p}(\tilde\pi) = (\alpha_p^\circ)^{-\beta}\int_{\Galp} \chi_{[p]} \cdot d\sEv_\beta^\dagger(\Phi_{\tilde\pi}) = (\alpha_p^\circ)^{-\beta} \cdot \sEv_{\chi}^{[*]}(r_\lambda(\Phi_{\tilde\pi})),
\]
by Proposition \ref{prop:evaluations interpolate}. Since $r_\lambda(\Phi_{\tilde\pi}) = \phi_{\tilde\pi} = \Theta_{[\omega]}^{K,i_p}(\varphi_{f}^{\mathrm{FJ},\alpha})$, this gives $(\alpha_p^\circ)^{-\beta}$ times the right-hand side of Theorem \ref{thm:critical value}, which has a factor $\lambda(t_p^{\beta})$. By definition of $\alpha_{\pri}^\circ$, we have 
	\[
		\lambda(t_p^\beta)(\alpha_p^\circ)^{-\beta} = \alpha_p^{-\beta} = \prod_{\pri|p} \alpha_{\pri}^{-\beta_\pri}.
	\]
	 If $\chi_{\pri}$ is unramified, then $\beta_{\pri} = 1$, and this factor of $\alpha_{\pri}$ cancels the one appearing in $Q'(\pi_{\pri},\chi_{\pri})$ in Theorem \ref{thm:critical value}, leaving exactly $e(\pi_{\pri},\chi_{\pri})$ from Theorem \ref{thm:intro}. If $\chi_{\pri}$ is ramified, then this factor of $\alpha_{\pri}^{-\beta}$ is exactly the difference between $Q'(\pi_{\pri},\chi_{\pri})$ and $e(\pi_{\pri},\chi_{\pri})$. 
	 
	 This gives exactly the interpolation formula of Theorem \ref{thm:intro}, except for one additional term: the sign $\chi_{[p]}(\det w_n)$. To remove this, we simply renormalise $L_p^{i_p}(\tilde\pi)$ by multiplying it by the Dirac measure $[\det w_n]$, that is, the bounded measure on $\Galp$ defined by $\int_{\Galp} \chi \cdot d[\det w_n] = \chi(\det w_n)$. This process removes the term $\chi_{[p]}(\det w_n)$, proving the claimed interpolation formula and completing the proof of Theorem \ref{thm:intro}. \qed

\subsection{Unicity in the imaginary quadratic case}\label{sec:IQF}

Now let $F$ be imaginary quadratic. The purity condition on $\lambda$ (Definition \ref{def:pure}) implies that $\lambda_{\sigma,n} - \lambda_{\sigma,n+1} = \lambda_{c\sigma,n} - \lambda_{c\sigma,n+1} = k$, say, so the range of standard critical infinity types for $\lambda$ forms the $(k+1)\times (k+1)$ square $\{(j_\sigma,j_{c\sigma}) \in \Z^2 : -\lambda_{\sigma,n+1} \geq j_\sigma \geq -\lambda_{\sigma,n}, -\lambda_{c\sigma,n+1} \geq j_{c\sigma} \geq -\lambda_{c\sigma,n}$. The non-$Q$-critical slope bound says, then, that `the growth is strictly less than the number of critical values'. Let $\tilde\pi$ be as in Theorem \ref{thm:intro}. The following is proved in \cite{Loe14} (cf.\ \cite[\S7.5]{Wil17}).

\begin{proposition}\label{prop:unique}
	If $\tilde\pi$ is non-$Q$-critical slope, then $L_p^{i_p}(\tilde\pi)$ is uniquely determined by its growth and interpolation properties.
\end{proposition}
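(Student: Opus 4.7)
The plan is to invoke Visik's abstract unicity criterion for admissible distributions on $p$-adic Lie groups, as formulated and applied in the $\GL_2$ imaginary quadratic setting by Loeffler in \cite{Loe14}. Via the exact sequence \eqref{eq:cl SES}, after quotienting by finite torsion the group $\Galp$ is, up to the finite component group $\cl_F^+$, identified with the image of $(\cO_F\otimes\Zp)^\times$, which is a $p$-adic Lie group of rank $2$ (reflecting the two directions at $p$, weighted by ramification). Both the growth condition and the interpolation property are compatible with decomposition across cosets of the identity component, so I would first reduce unicity to unicity on a single such coset, identified with $1+p^{\bm}\cO_F\otimes\Zp$ for suitable $\bm$.

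Second, I would match the critical characters to the hypotheses of Loeffler's theorem. By purity (Definition \ref{def:pure}) we have $\lambda_{\sigma,n} - \lambda_{\sigma,n+1} = \lambda_{c\sigma,n} - \lambda_{c\sigma,n+1} =: k_\pri$ for $\sigma$ above $\pri$, so the standard critical infinity types $\bj = (j_\sigma,j_{c\sigma})$ sweep out a $\prod_{\pri|p}(k_\pri+1)$-box. The non-$Q$-critical slope hypothesis reads exactly $e_\pri v_p(\alpha_\pri^\circ) < k_\pri+1$ for every $\pri|p$. Loeffler's unicity theorem then asserts that an $\bh_p$-admissible distribution on the relevant coset is uniquely determined by its integrals against locally algebraic characters of the form $x \mapsto x^{\bj}\chi(x)$, where $\bj$ runs over the critical box and $\chi$ runs over finite-order characters of $p$-power conductor. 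The values of these integrals for $L_p^{i_p}(\tilde\pi)$ are prescribed explicitly by Theorem \ref{thm:intro}(a), which pins down the distribution.

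The main obstacle will be the careful bookkeeping: translating Loeffler's $\Zp^d$-framework to $\Galp$ (decomposing across $\cl_F^+$ and handling the ramification index $e_\pri$ correctly, in particular in the case where $p$ is inert or ramified in $F$), and checking that the locally algebraic characters $\chi_{[p]}$ arising from $\chi \in \mathrm{Crit}_p(\pi)$ really do exhaust the dense interpolation set required by Visik's criterion (as opposed to missing characters whose infinity type does not extend to a Hecke character of the correct sign type $\epsilon_\chi$). Crucially, no $L$-value non-vanishing is required: the interpolation formula is an \emph{equality} that determines $L_p^{i_p}(\tilde\pi)$ at each such character, regardless of whether the underlying $L$-value happens to vanish. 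Once the framework is aligned, the unicity follows formally from the abstract theorem of \cite{Loe14}.
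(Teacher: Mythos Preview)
Your proposal is correct and takes essentially the same approach as the paper, which simply cites \cite{Loe14} (with a cross-reference to \cite[\S7.5]{Wil17}) for the unicity. Your sketch spells out the mechanics behind that citation; note only that since $F$ is imaginary quadratic there are no real places, so the sign-type worry about $\epsilon_\chi$ is vacuous here, and by purity all your $k_{\pri}$ collapse to the single $k$ in the paragraph preceding the proposition.
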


When $F$ is totally real, the analogous unicity results are discussed in \cite[Prop.\ 6.25]{BDW20}. If $F$ is not imaginary quadratic or totally real, unicity remains mysterious; see e.g.\ \cite[\S13]{BW_CJM}.

\bigskip
\bigskip

\footnotesize

\textbf{Data availability statement:} No new data were created in the preparation of this paper.

\textbf{Conflict of interest statement:} No conflicts of interest arose in the preparation of this paper.

\textbf{Funding declaration:} This paper was supported by EPSRC Postdoctoral Fellowship EP/T001615/2.

	\renewcommand{\refname}{\normalsize References} 
\newcommand{\etalchar}[1]{$^{#1}$}

\end{document}